\newtheorem{theorem}{Theorem}[section]
\newtheorem{lemma}[theorem]{Lemma}
\newtheorem{remark}[theorem]{Remark}
\numberwithin{equation}{section}
\begin{document}

\markboth{Z. Shen $\&$ J. Wei}{Spatiotemporal patterns in a delay-diffusion mussel-algae model}

\title{Spatiotemporal patterns near the Turing-Hopf bifurcation in a delay-diffusion mussel-algae model}

\author{ZUOLIN SHEN\footnote{Email: mathust\_lin@foxmail.com} ~and JUNJIE WEI\footnote{Corresponding author. Email: weijj@hit.edu.cn}\\
{\small Department of Mathematics,
Harbin Institute of Technology, \hfill{\ }}\\
{\small Harbin, Heilongjiang, 150001, P.R.China\hfill{\ }}\\
}

\maketitle

\begin{abstract}
The spatiotemporal patterns of a reaction diffusion mussel-algae system with a delay
subject to Neumann boundary conditions is considered. The paper is a continuation of our previous
studies on delay-diffusion mussel-algae model. The global existence and positivity of solutions are obtained.
The stability of the positive constant steady state and existence of Hopf bifurcation and Turing bifurcation
are discussed by analyzing the distribution of eigenvalues. Furthermore, the dynamic classifications near the
Turing-Hopf bifurcation
point are obtained in the dimensionless parameter space by calculating the normal form on the center manifold,
and the spatiotemporal patterns
consisting of spatially homogeneous periodic solutions, spatially inhomogeneous steady states, and spatially
inhomogeneous periodic solutions are identified in this parameter space through  some numerical simulations.
Both theoretical and numerical results reveal that the Turing-Hopf bifurcation can enrich the diversity of
spatial distribution of populations.
{\bf Keywords}: mussel-algae system; reaction diffusion; global stability; Hopf bifurcation; delay.
\end{abstract}

\section{Introduction}

Two typical features of biological systems are the complexity of their organization structure and the interactions
of various factors. Mussel beds are a typical system for the study of pattern formation and patterns develop at two
distinctly separate scales in mussel beds \cite{Liu-2014}, large-scale banded patterns, and small-scale net-shaped patterns.
One of the models used to describe the process of large-scale patterns is
\begin{subequations}\label{eq_ma}
\begin{equation}\label{eq_01}
 \begin{cases}
 \cfrac{\partial }{\partial t}M(x,t)=D_{M}\Delta M(x,t) +e c M(x,t) \left(A(x,t) -d_{M}\cfrac{k_{M}}{k_{M}+M(x,t)}\right),  \\
 \cfrac{\partial }{\partial t}A(x,t)=D_{A}\Delta A(x,t)+(A_{up}-A(x,t))f-\cfrac{c}{H}M(x,t)A(x,t).\\
\end{cases}
\end{equation}
with the following initial data and Neumann boundary conditions
\begin{equation}\label{eq_02}
\begin{array}{l}
\partial_{\nu}M=\partial_{\nu}A=0, ~x\in \partial\Omega, ~t>0,\\
 M(x,0)=M_{0}(x)\geq 0,~A(x,0)=A_{0}(x)\geq 0, ~x\in\Omega.
\end{array}
\end{equation}
\end{subequations}
where $\Omega$ is a bounded domain in $\mathbb{R}^n$ with a smooth boundary $\partial{\Omega}$.
$M(x,t)$ represents the mussel biomass density at location $x$ and time $t>0$ on the sediment, and
$A(x,t)$ represents the algae concentration in the lower water layer overlying the mussel bed
while $A_{up}$ describes the uniform concentration of algae in the upper reservoir water layer.
Here, $e$ is a conversion constant relating ingested algae to mussel biomass production,
$c$ is the consumption constant,
$d_{M}$ is the maximal per capita mussel mortality rate,
$k_{M}$ is the value of $M$ at which mortality is half maximal, and the mussel mortality is assumed to decrease when mussel density increases because of a reduction of dislodgment and predation in dense clumps.
$f$ is the rate of exchange between the lower and upper water layers,
$H$ is the height of the lower water layer,
$D_{M}$ and $D_{A}$ are the diffusion coefficients of the mussel and algae, respectively.
$\nu$ is the outward unit normal vector on $\partial \Omega$. The homogeneous Neumann boundary condition indicates that there is no biomass input and output at the boundary.

Such a mussel-algae model was first proposed by van de Koppel {\it et al.} \cite{Kop-2005} to
investigate the importance of self-organization in affecting the emergent properties of nature systems of large spatial scales.
One thing that's different from Koppel's original model is that there is no random Brownian dispersion term $D_{A}\Delta A$, but a unidirectional advection term $V\nabla A$ instead used
to describe the affect of tidal current. Cangelosi {\it et al.} \cite{Can} modified the model to the way it is now,
and the modification is an extension of original model which as a first approximation to the field experiment
of van de Koppel {\it et al.} \cite{Kop-2008} and Liu {\it et al.} \cite{Liu-2013} and
in exact accordance with their laboratory experiment.
Both models (original and modified) have been discussed by scholars, see \cite{Liu-2012, Liu-2014, ShM, WLS}.
Ghazaryan and Manukian \cite{GhM} have captured the nonlinear mechanisms
of pattern and wave formation of Koppel's original model by applying the geometric singular
perturbation theory.
Sherratt and Mackenzie \cite{ShM} have considered the implications of the algae's advection for
pattern formation with the advection oscillating with tidal flow.
Based on the normal form method, Song {\it et al.} \cite{SJL}
have studied the Turing-Hopf bifurcation of \eqref{eq_ma} with a Neumann boundary conditions,
and obtained the explicit dynamical classification in the corresponding critical point.

As is well known that delay can lead to the periodic solutions \cite{ChY, XuW}, while diffusion can cause
Turing patterns \cite{Kla, NiT, OuS, Tur}. An obvious idea is how their interaction will affect the dynamics of the system.
In this paper, we mainly study the following delay-diffusion mussel-algae system
\begin{equation}\label{eq_MA_tau}
 \begin{cases}
 \cfrac{\partial }{\partial t}M(x,t)=D_{M}\Delta M(x,t) +e c M(x,t) \left(A(x,t-\tau) -d_{M}\cfrac{k_{M}}{k_{M}+M(x,t-\tau)}\right),  \\
 \cfrac{\partial }{\partial t}A(x,t)=D_{A}\Delta A(x,t)+(A_{up}-A(x,t))f-\cfrac{c}{H}M(x,t)A(x,t).\\
 \partial_{\nu}M=\partial_{\nu}A=0, ~x\in \partial\Omega, ~t>0,\\
 M(x,t)=M_{0}(x,t)\geq 0,~A(x,t)=A_{0}(x,t)\geq 0, ~x\in\Omega,~-\tau\leq t\leq 0.
\end{cases}
\end{equation}
where $\tau$ is the digestion period of mussel and the mortality of mussels depends on the state whether they have eaten in the past.
By employing the rescaling
$$
\begin{array}{l}
m=\cfrac{M}{k_{M}},~a=\cfrac{A}{A_{up}},~\omega=\cfrac{c k_{M}}{H},~\hat{t}=d_{M}t,~\alpha=\cfrac{f}{\omega},\\
r=\cfrac{e c A_{up}}{d_{M}},~\gamma=\cfrac{d_{M}}{\omega},~d=\cfrac{D_{M}}{\gamma D_{A}},~\hat{x}=x\sqrt{\cfrac{\omega}{D_{A}}}.\\
\end{array}
$$
we have
\begin{equation}\label{eq_ma_tau}
 \begin{cases}
 \cfrac{\partial }{\partial t}m(x,t)=d\Delta m(x,t)+m(x,t)\Big(ra(x,t-\tau) -\cfrac{1}{1+m(x,t-\tau)}\Big), &x\in\Omega, ~t>0,\\
 \gamma  \cfrac{\partial}{\partial t} a(x,t)=\Delta a(x,t)+\alpha(1-a(x,t))-m(x,t)a(x,t),&x\in\Omega, ~t>0,\\
\partial_{\nu}m=\partial_{\nu}a=0, &x\in\partial\Omega, ~t>0,\\
 m(x,t)=m_{0}(x,t)\geq 0,~a(x,t)=a_{0}(x,t)\geq 0, &x\in\Omega, ~-\tau\leq t\leq 0.
\end{cases}
\end{equation}
For simplicity, we have drop the `~$\hat{}$~'.

This paper is a continuation of our previous studies on delay-diffusion mussel-algae model.
We mainly concern the spatiotemporal dynamics of Eq.\eqref{eq_ma_tau} near the Turing-Hopf bifurcation point with $\tau$ and $d$ as
the bifurcation parameters. The study of Turing-Hopf bifurcation is not a new topic \cite{BGF, DLDB, HaR, SoZ, YaS}. Most of the studies have
focused on the emergence of spatiotemporal patterns or the non-degenerate cases, but not many have been done on degenerate cases (Hopf bifurcation and Turing instability occur simulta
neously).
Recently, An and Jiang \cite{AnJ} extend the normal form methods proposed by Faria \cite{Far} to Turing-Hopf singularity of a general two-components delayed reaction diffusion system, and present a detailed calculation formulas.
Motivated by their work, we study the spatiotemporal dynamics of system \eqref{eq_ma_tau}. Compared with
the work of \cite{ShW-2, SJL}, we focus more on the common effects of delay and diffusion. Hence, a basic assumption is that the positive constant steady state is locally asymptotically stable under a homogeneous perturbation when time delay is equal to zero, and this assmption
allows us to identify the importance of delay and diffusion in the process of pattern formation.
The main contribution of this article can be concluded as: first, the proof of wellposedness of system \eqref{eq_ma_tau}; second, a detail bifurcation analysis with $\tau$ and $d$ as the bifurcation parameters;
third, we show a rational explanation of different spatiotemporal distribution of mussel beds from both theoretical results and numerical
simulations.

The rest of this paper is organized as follows.
In section 2, we firstly give the proof of wellposedness of solutions, then study the stability of positive constant steady state including the existence of the Hopf bifurcation, Turing instability, and Turing-Hopf interaction. We take
$\tau$ and $d$ as the bifurcation parameters which can reflect their effect on the dynamics of the system.
In section 3, we show a detailed formulas for calculating the normal form of system \eqref{eq_ma_tau} with the method proposed by \cite{AnJ}.
In section 4, we discuss the dynamic classification and spatiotemporal patterns near the Turing-Hopf
bifurcation point, and for each dynamic region, some numerical simulations are presented to illustrate our theoretical analysis.
In section 5, we end this paper with conclusions and some discussions about the following work of this model.
Throughout the paper, we denote $\mathbb{N}$ as the set of positive integers, and $\mathbb{N}_0=\mathbb{N}\cup\{0\}$ as the set of non-negative integers.

\section{Existence and stability analysis}

\subsection{Existence and boundedness}
In this subsection, we first state the wellposedness result of the solutions of the initial value problem \eqref{eq_ma_tau}, for more details of abstract theory, refer to \cite{Pao-1996, Tay}.

\begin{theorem}\label{th-exis}
Suppose that $\alpha$, $\gamma$, $r$ and $d$ are all positive, the initial data satisfies $m_{0}(x,t)\geq 0, a_{0}(x,t)\geq 0$  for $(x,t)\in \overline{\Omega}\times[-\tau,0]$. Then the system \eqref{eq_ma_tau} has a unique solution $(m(x,t),a(x,t))$ satisfying
$$0\leq m(x,t),~~0\leq a(x,t)\leq \max\{\|a_0\|_{\infty}, 1\}~~ \text{for}~~ (x,t)\in \overline{\Omega}\times[0,+\infty).$$
where $\|\psi\|_{\infty}=\sup_{x\in\overline{\Omega},t\in[-\tau,0]} \psi(x,t)$.
Moreover, if $m_{0}(x,0) \not\equiv 0, a_{0}(x,0) \not\equiv 0$, then $m(x,t)>0, a(x,t)>0$ for $(x,t)\in \overline{\Omega}\times(0,+\infty)$.
\end{theorem}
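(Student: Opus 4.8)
The plan is to establish well-posedness via the method of upper and lower solutions for the coupled parabolic system with delay, treating the reaction terms appropriately so that the comparison principle applies. First I would set up the problem on a bounded time interval $[0,\tau]$ and iterate. On $[0,\tau]$ the delayed arguments $m(x,t-\tau)$ and $a(x,t-\tau)$ are known functions (determined by the initial data on $[-\tau,0]$), so the system becomes a genuine reaction-diffusion system without delay, with time-dependent but prescribed coefficients. For this system I would verify that the reaction terms are quasimonotone (mixed quasimonotone, in the terminology of Pao) in the relevant ordering after possibly a change of variables: the $m$-equation has the form $m\,(r\,a(\cdot,t-\tau)-1/(1+m(\cdot,t-\tau)))$, which for fixed delayed data is a function of $m$ alone multiplied by $m$, hence after writing it as $m\cdot g(x,t)$ one sees the right-hand side vanishes when $m=0$; similarly the $a$-equation reaction $\alpha(1-a)-ma$ is decreasing in $a$ and, for $a\ge 0$, the cross term $-ma\le 0$. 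This structure is exactly what is needed to invoke the abstract existence theorem in \cite{Pao-1996, Tay}.

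Next I would exhibit the ordered pair of constant (in $x$) upper and lower solutions. For the lower solution I take $\underline m\equiv 0$, $\underline a\equiv 0$: plugging in, the $m$-equation gives $0 \le 0 + 0\cdot(\cdots)=0$ and the $a$-equation gives $\gamma\cdot 0 \le 0 + \alpha(1-0) - 0 = \alpha > 0$, so $(0,0)$ is a lower solution provided the delayed data are nonnegative, which holds by hypothesis and by induction. For the upper solution of the $a$-component I take $\overline a(t)$ solving $\gamma \overline a' = \alpha(1-\overline a)$ with $\overline a(0)=\max\{\|a_0\|_\infty,1\}$; since $\alpha(1-a)-ma\le \alpha(1-a)$ for $m\ge 0$, this $\overline a(t)$ (which stays $\le \max\{\|a_0\|_\infty,1\}$ and decreases toward $1$) dominates $a$. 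The $m$-component has no a priori upper bound from the equations — and indeed the theorem does not claim one, only $0\le m$ — so I would handle $m$ by a local-existence-plus-continuation argument: on any interval where $a$ is bounded (which it is, by the bound just established) the $m$-equation has a locally Lipschitz reaction term in $m$, giving a local solution, and the lower bound $m\ge 0$ together with standard parabolic $L^p$–$L^\infty$ estimates prevents finite-time blow-up in the relevant norm, so the solution extends to $[0,\tau]$. Then I repeat on $[\tau,2\tau]$, $[2\tau,3\tau]$, etc., at each stage feeding in the solution from the previous interval as the (now nonnegative, $a$-bounded) delayed data; the bounds propagate uniformly, yielding a global solution with $0\le m$ and $0\le a\le\max\{\|a_0\|_\infty,1\}$.

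Finally, for the strict positivity assertion when $m_0(x,0)\not\equiv0$ and $a_0(x,0)\not\equiv0$, I would apply the strong maximum principle / parabolic Harnack inequality to each linear equation satisfied by $m$ and by $a$: writing the $a$-equation as $\gamma a_t - \Delta a + (\alpha + m)a = \alpha$ with nonnegative bounded zeroth-order coefficient and strictly positive right-hand side forces $a>0$ for $t>0$; writing the $m$-equation as $m_t - d\Delta m - c(x,t)m = 0$ with $c(x,t)=r\,a(x,t-\tau)-1/(1+m(x,t-\tau))$ bounded, the strong maximum principle gives that $m$ is either identically zero or strictly positive, and $m(x,0)=m_0(x,0)\not\equiv0$ rules out the former. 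The main obstacle I anticipate is not any single estimate but the bookkeeping in the step-by-step continuation: one must be careful that the mixed-quasimonotone structure is preserved at each iteration (in particular that the delayed data remain nonnegative so that $(0,0)$ stays a valid lower solution) and that the $a$-bound used to get local existence for $m$ is uniform across all the intervals, so that the construction genuinely yields a solution on $[0,+\infty)$ rather than only up to some finite blow-up time.
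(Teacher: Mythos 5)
Your overall framework is the same as the paper's (Pao's mixed quasimonotone upper--lower solution machinery, the comparison $\gamma a_t\le \Delta a+\alpha(1-a)$ to get $a\le\max\{\|a_0\|_\infty,1\}$, and stepwise strong maximum principle for strict positivity), but you handle the $m$-component genuinely differently. The paper never resorts to a local-existence-plus-continuation argument: it builds a \emph{global} coupled upper solution for both components at once, namely the solution $(m^*(t),a^*(t))$ of the auxiliary ODE system $\dot m=rma-m/(1+m)$, $\gamma\dot a=\alpha(1-a)$ started from $\sup m_0$, $\sup a_0$. Since $a^*$ is bounded, $m^*$ grows at most exponentially and exists for all $t>0$, so one single application of Pao's Theorem 2.1 (for the delayed system, with lower solution $(0,0)$) yields global existence, uniqueness, and $0\le m\le m^*(t)$, $0\le a\le a^*(t)$ simultaneously; the method of steps is used only afterwards, for the positivity claim. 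Your route buys a slightly more hands-on construction that does not require an a priori upper bound for $m$, while the paper's device of the ODE upper solution buys a one-shot global statement and, importantly, uniqueness inside the sector without extra argument.

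The one step in your write-up that does not stand as written is the blow-up prevention for $m$: ``the lower bound $m\ge0$ together with standard parabolic $L^p$--$L^\infty$ estimates prevents finite-time blow-up'' is not a valid reason in general (nonnegative solutions of, e.g., $m_t=\Delta m+m^2$ blow up). The correct and much simpler observation, available inside your own method-of-steps setup, is that on each interval $[k\tau,(k+1)\tau]$ the delayed arguments $m(\cdot,t-\tau)$, $a(\cdot,t-\tau)$ are known bounded functions, so the reaction in the $m$-equation is $m\cdot g(x,t)$ with $g$ bounded; comparison with $\sup m(\cdot,k\tau)\,e^{\|g\|_\infty (t-k\tau)}$ then gives a finite bound on each step and the solution continues. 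This is exactly the exponential-growth control the paper encodes once and for all in the ODE upper solution $m^*(t)$. With that replacement (and noting that uniqueness on each step follows from the locally Lipschitz, non-delayed structure there), your argument is complete; the $m$-bound need not be uniform in $k$, only the $a$-bound, which you correctly keep uniform.
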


\begin{proof}
Define $F=\Big(f(m,a,m_{\tau},a_{\tau}),g(m,a,m_{\tau},a_{\tau})\Big)^T$ with
$$
f(m,a,m_{\tau},a_{\tau})=rma_{\tau} -\cfrac{m}{1+m_{\tau}}, ~g(m,a,m_{\tau},a_{\tau})=\alpha(1-a)-ma.
$$
where $m=m(x,t)$, $a=a(x,t)$, $m_{\tau}=m(x,t-\tau)$, $a_{\tau}=a(x,t-\tau)$.

 It is easy to prove that $F$ possesses a mixed quasi-monotone property since $D_af=0, D_{m_{\tau}}f>0, D_{a_{\tau}}f>0$ and $D_mg<0, D_{m_{\tau}}g= D_{a_{\tau}}g=0$ for $(m,a,m_{\tau},a_{\tau})\in \mathbb{R}^4_+$.

Let $(m^*(t),~a^*(t))$ be the unique solution of the following ODE system
\begin{equation}\label{eq_ode}
\begin{cases}
    \cfrac{\text{d}m}{\text{d}t}=rma -\cfrac{m}{1+m},\\
   \gamma \cfrac{\text{d}a}{\text{d}t}=\alpha(1-a),  \\
    m(0)=\phi_1, a(0)=\phi_2.
\end{cases}
\end{equation}
with
$$
  \phi_1=\sup_{x\in\overline{\Omega},t\in[-\tau,0]}m_0(x,t), ~~~~\phi_2=\sup_{x\in\overline{\Omega},t\in[-\tau,0]}a_0(x,t);
$$
Denote $(\widetilde{m}, \widetilde{a})=(m^*(t),~a^*(t))$, $(\widehat{m}, \widehat{a})=(0, 0)$. Since
$$
\cfrac{\partial \widetilde{m}}{\partial t}-d\Delta\widetilde{m}-r\widetilde{m}\widetilde{a}-\cfrac{\widetilde{m}}{1+\widetilde{m}}=0\geq 0=
\cfrac{\partial \widehat{m}}{\partial t}-d\Delta\widehat{m}-r\widehat{m}\widehat{a}-\cfrac{\widehat{m}}{1+\widehat{m}},
$$
$$
\gamma\cfrac{\partial \widetilde{a}}{\partial t}-\Delta\widetilde{a}-\alpha (1-\widetilde{a})-\widehat{m}\widetilde{a}=0\geq -\alpha=
\gamma\cfrac{\partial \widehat{a}}{\partial t}-\Delta\widehat{a}-\alpha(1-\widehat{a})-\widetilde{m}\widehat{a}.
$$
and
$$
0\leq m_0(x,t)\leq \phi_1, ~0\leq a_0(x,t)\leq \phi_2 ~~\text{for}~~ (x,t)\in \Omega\times[-\tau,0].
$$

Then  $(\widetilde{m}, \widetilde{a})$ and  $(\widehat{m}, \widehat{a})$ are the coupled upper and lower solutions of system \eqref{eq_ma_tau}. Hence, from Theorem 2.1 in \cite{Pao-1996}, the system \eqref{eq_ma_tau} has a unique solution $(m(x,t), a(x,t))$ which satisfies
$$
0\leq m(x,t)\leq m^*(t),~~0\leq a(x,t)\leq a^*(t)$$

Applying the comparison principle to the second equation of system \eqref{eq_ma_tau}, we can easily get $a(x,t)\leq \max\{\|a_0\|_{\infty}, 1\}$.
To prove the positivity, we set $t\in[0,\tau]$, then $m_{\tau}$, $a_{\tau}$ coincide with the initial data $m_0(x,t-\tau), a_0(x,t-\tau)$. Since $m_{0}(x,0)\not\equiv 0$, $a_{0}(x,0)\not\equiv 0$, then $m(x,t)>0, a(x,t)>0$ for $(x,t)\in \Omega\times (0,\tau]$ from the standard maximum principle for semilinear parabolic equations. Repeating this process, we can obtain that $m(x,t)>0$, $a(x,t)>0$ for  $(x,t)\in \Omega\times (0,\infty)$.
\end{proof}

\subsection{Stability analysis}\label{linear and Hopf}

For the convenience of further discussion, we first define the following real-value Sobolev space
$$ X:=\Big\{(u,v)\in H^2(\Omega)\times H^2(\Omega)|\partial_{\nu}u=\partial_{\nu}v=0, x\in \partial\Omega\Big\}
$$
and its complexification, $X_{\mathbb{C}}:=X\oplus iX=\{x_1+i x_2|x_1,x_2 \in X\}$ with a complex-valued $L^2$ inner product $<\cdot,\cdot>$ which defined as
$$
<U_1,U_2>=\int_{\Omega}(\bar{u}_1u_2+\bar{v}_1v_2)dx
$$
with $U_i=(u_i, v_i)^T \in X_{\mathbb{C}}, i=1,2$.

The system \eqref{eq_ma_tau} always has a non-negative constant solution $E_0(0,1)$ which corresponds to the bare sediment biologically, and
the system also has a positive equilibrium $E_*(m^*,a^*)$ with $m^*=\cfrac{\alpha (r-1)}{1-\alpha r}, a^*=\cfrac{1-\alpha r}{r(1-\alpha)}$
if the following assumption satisfies:
$$
\textsc{(H1)}~~~~~~\qquad~~~ 0<\alpha<1<r<\alpha^{-1}.~~~~~~~~
$$

Suppose that the spatial domain $\Omega=(0,l\pi)$, that is $\Omega$ is an interval in one space dimension. Here let the phase space $\mathscr{C}:=C([-\tau,0],X_{\mathbb{C}})$. Our main focus is the stability of  positive constant steady state $E_*(m^*,a^*)$ with respect to the model \eqref{eq_ma_tau}, and the results of the boundary steady state $E_0(0,1)$ can be seen in \cite{ShW-2}.

The linearization of system \eqref{eq_ma_tau} at $E_*(m^*,a^*)$ is given by
\begin{equation}\label{eq_linear}
\dot{U}(t)=D\Delta U(t)+L(U_t),
\end{equation}
where $D= \text{diag}(d, \gamma^{-1})$, and $L:\mathscr{C}\to X_{\mathbb{C}}$ is defined as
$$
L(\phi)=L_1\phi(0)+L_2\phi(-\tau),
$$
with
$$\begin{array}{l}
L_1=\gamma^{-1}\left( \begin{array}{cc}
         0 ~&~ 0\\
         -a^*  ~&~ -(\alpha+m^*)
       \end{array}\right),~~
\quad L_2=\left( \begin{array}{cc}
         \cfrac{m^*}{(1+m^*)^2} ~&~ r m^* \\
         0 ~&~ 0
       \end{array}\right),
\end{array}$$
$$
\phi(t)=\big(\phi_1(t),~\phi_2(t)\big)^{^T},~~\phi_t(\cdot)=\big(\phi_1(t+\cdot),~\phi_2(t+\cdot)\big)^{^T}.
$$

It is well known that the eigenvalue problem
$$
-\Delta \xi=\sigma \xi,~~x\in(0,l\pi),~~ \xi'(0)=\xi'(l\pi)=0
$$
has eigenvalues $\sigma_n=\frac{n^2}{l^2}$, $n\in\mathbb{N}_0$, with corresponding eigenfunctions $\xi_n(x)=\cos\frac{n}{l}x$.
Let $U(x,t)=e^{\lambda t}\xi(x)$, we have that the corresponding characteristic equation of system \eqref{eq_linear} satisfies
\begin{equation}\label{c-eq1}
    \lambda \xi-D\Delta \xi-L(e^{\lambda\,\cdot}\xi)=0,
\end{equation}
Then \eqref{c-eq1} can be transform into
$$\det\Big(\lambda \text{I}+D\cfrac{n^2}{l^2}-L_1-L_2 e^{-\lambda\tau}\Big)=0,~~n\in\mathbb{N}_0.$$
That is, there exists some $n\in \mathbb{N}_0$ such that $\lambda$ satisfies the following characteristic equation
\begin{equation}\label{c-eq2}
    E_n(\lambda,\tau,d):=\gamma\lambda^2+T_n\lambda+(B\lambda+M_n)e^{-\lambda\tau}+D_n=0, ~~n\in\mathbb{N}_0,
\end{equation}
where
\begin{equation}\label{TDMB}
\begin{array}{l}
     T_n=\alpha+m^*+(1+\gamma d)\cfrac{n^2}{l^2}, \quad  M_n=ra^*m^*(1-\alpha r-ra^*\cfrac{n^2}{l^2});\\
     D_n=d(\alpha+m^*+\cfrac{n^2}{l^2})\cfrac{n^2}{l^2},\quad B=-\gamma r^2{a^*}^2m^*.
\end{array}
\end{equation}

In the following, we analyze the existence of Turing-Hopf bifurcation for the positive constant steady state. In order to understand how delay and diffusion coefficient affect the Turing-Hopf bifurcation, we choose $\mu=(\tau, d)$ as the bifurcation parameters since Turing-Hopf is a codimension-two bifurcation. For a general case, $\mu=(\mu_1,\mu_2)\in\mathbb{R}^2$, the conditions for the occurrence of Turing-Hopf bifurcation can be described as:

\textbf{(TH)} There exists a neighborhood $\mathscr{N}(\mu_0)$ of $\mu_0=(\mu_{10}, \mu_{20})$, and $n_1, n_2 \in \mathbb{N}_0$ such that characteristic equation \eqref{c-eq2} has a pair of complex simple conjugate eigenvalues $\beta_{n_1}(\mu)\pm i\omega_{n_1}(\mu)$ and a simple real eigenvalue $\alpha_{n_2}(\mu)$ for $\mu\in\mathscr{N}(\mu_0)$, both continuously differentiable in $\mu$, and satisfy $\beta(\mu_0)=0, \omega(\mu_0)=\omega_0>0, \frac{\partial}{\partial{\mu_1}}\beta(\mu_0)\neq0, \alpha(\mu_0)=0, \frac{\partial}{\partial{\mu_2}}\alpha(\mu_0)\neq 0$; all other eigenvalues have non-zero real parts.

In our previous paper \cite{ShW-2}, it has been proved that, the system \eqref{eq_ma} without diffusion can undergo Hopf bifurcation when parameters are chosen appropriately.

\begin{lemma}\cite{ShW-2}\label{hopf}
Assume that $\textsc{(H1)}$ is satisfied. For system \eqref{eq_ma} without diffusion,
\begin{enumerate}
 \item  If $\mathcal{H}_{0}^2(r)<\mathcal{P}_{0}(r)$, the positive equilibrium $E_*(m^*,a^*)$ of system \eqref{eq_ma} is locally asymptotically stable;
 \item  If $\mathcal{H}_{0}^2(r)>\mathcal{P}_{0}(r)$, the positive equilibrium $E_*(m^*,a^*)$ of system \eqref{eq_ma} is unstable;
 \item  If $r_{_H}\in S$ satisfies the equation $\mathcal{H}_{0}^2(r)=\mathcal{P}_{0}(r)$, the system \eqref{eq_ma} undergoes a Hopf bifurcation at $r=r_{_H}$ which corresponds to spatially homogeneous periodic solution; the critical curve of Hopf bifurcation is defined by $\mathcal{H}_{0}^2(r)=\mathcal{P}_{0}(r)$, where  \\
 $\mathcal{H}_{0}(r)=\cfrac{1-\alpha r}{1-\alpha}$, $\mathcal{P}_{0}(r)=\cfrac{r(1-\alpha)}{\gamma(r-1)}$.

\end{enumerate}
\end{lemma}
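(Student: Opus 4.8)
The plan is to reduce the question to a single quadratic characteristic equation and then read off all three items from the signs of its coefficients. For the diffusion-free, delay-free system the only relevant mode is the spatially homogeneous one, $n=0$, and setting $\tau=0$ in \eqref{c-eq2} collapses it to
\[
\gamma\lambda^{2}+(T_{0}+B)\,\lambda+M_{0}=0,
\]
with $T_{0}=\alpha+m^{*}$, $B=-\gamma r^{2}{a^{*}}^{2}m^{*}$ and $M_{0}=ra^{*}m^{*}(1-\alpha r)$ taken from \eqref{TDMB} (the term $D_{0}$ vanishes at $n=0$, so $d$ plays no role). Under \textsc{(H1)} we have $r>0$, $m^{*}>0$, $a^{*}>0$ and $1-\alpha r>0$, hence $M_{0}>0$: the product of the two roots is positive, no root sits on the imaginary axis unless $T_{0}+B=0$, and the common sign of the real parts of the roots is that of $-(T_{0}+B)$. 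Thus the stability of $E_{*}$ is governed entirely by the sign of $T_{0}+B$.

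The heart of the argument is to rewrite $T_{0}+B$ through the curves $\mathcal H_{0}$ and $\mathcal P_{0}$. Substituting $m^{*}=\frac{\alpha(r-1)}{1-\alpha r}$ and $a^{*}=\frac{1-\alpha r}{r(1-\alpha)}$ yields the equilibrium identities $ra^{*}=\frac{1}{1+m^{*}}=\frac{1-\alpha r}{1-\alpha}=\mathcal H_{0}(r)$ and $\frac{\alpha+m^{*}}{m^{*}}=\frac{r(1-\alpha)}{r-1}=\gamma\,\mathcal P_{0}(r)$, from which
\[
T_{0}+B=m^{*}\Big(\tfrac{\alpha+m^{*}}{m^{*}}-\gamma\,(ra^{*})^{2}\Big)=\gamma\,m^{*}\big(\mathcal P_{0}(r)-\mathcal H_{0}^{2}(r)\big).
\]
Since $\gamma m^{*}>0$, the sign of $T_{0}+B$ is the sign of $\mathcal P_{0}(r)-\mathcal H_{0}^{2}(r)$. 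Consequently $\mathcal H_{0}^{2}(r)<\mathcal P_{0}(r)$ puts both roots in the open left half-plane (item~1), $\mathcal H_{0}^{2}(r)>\mathcal P_{0}(r)$ puts them in the open right half-plane (item~2), and $\mathcal H_{0}^{2}(r)=\mathcal P_{0}(r)$ gives $T_{0}+B=0$, i.e. a conjugate pair of purely imaginary roots $\pm i\sqrt{M_{0}/\gamma}$ --- the necessary condition for item~3.

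To complete item~3 it remains to check the transversality condition and invoke the Hopf bifurcation theorem. Near $r_{_H}$ the two roots stay complex conjugate, since $(T_{0}+B)^{2}\to 0<4\gamma M_{0}$ as $r\to r_{_H}$, so their real part equals $\operatorname{Re}\lambda(r)=-\frac{T_{0}+B}{2\gamma}=\frac{m^{*}}{2}\big(\mathcal H_{0}^{2}(r)-\mathcal P_{0}(r)\big)$, whence
\[
\frac{d}{dr}\operatorname{Re}\lambda(r)\Big|_{r=r_{_H}}=\frac{m^{*}(r_{_H})}{2}\,\frac{d}{dr}\big(\mathcal H_{0}^{2}(r)-\mathcal P_{0}(r)\big)\Big|_{r=r_{_H}},
\]
which is nonzero precisely when the graphs of $\mathcal H_{0}^{2}$ and $\mathcal P_{0}$ cross transversally at $r_{_H}$; this is the property encoded in the hypothesis $r_{_H}\in S$. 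The imaginary pair being simple (automatic from $M_{0}>0$ for a planar system), the Poincar\'{e}--Andronov--Hopf theorem then yields a one-parameter family of periodic orbits bifurcating at $r=r_{_H}$, and since the associated eigenfunction $\xi_{0}(x)\equiv1$ is constant in space, these orbits are spatially homogeneous. I expect the only genuinely delicate point to be transversality: $\mathcal H_{0}^{2}-\mathcal P_{0}$ is a rational function of $r$ that could a priori be tangent to the axis at an isolated value, so one must confine $r_{_H}$ to the subset $S$ on which the derivative above does not vanish; everything else is the routine bookkeeping of substituting the explicit expressions for $m^{*}$ and $a^{*}$.
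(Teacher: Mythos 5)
The paper itself contains no proof of this lemma --- it is imported verbatim from the authors' submitted paper \cite{ShW-2} --- so there is no in-text argument to compare against; judged on its own, your proof is correct and is the standard route one would expect that reference to take. The reduction of \eqref{c-eq2} at $n=0$, $\tau=0$ to $\gamma\lambda^2+(T_0+B)\lambda+M_0=0$, the equilibrium identities $ra^*=\frac{1}{1+m^*}=\mathcal H_0(r)$ and $\frac{\alpha+m^*}{m^*}=\gamma\mathcal P_0(r)$, and hence $T_0+B=\gamma m^*\bigl(\mathcal P_0(r)-\mathcal H_0^2(r)\bigr)$ with $M_0>0$ under \textsc{(H1)}, all check out, and they do yield items 1--2 by Routh--Hurwitz and the purely imaginary simple pair at equality; your transversality computation is also right, since the product-rule term involving $\frac{dm^*}{dr}$ is killed by the vanishing factor at $r_{_H}$. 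The only point that cannot be settled from this paper is the meaning of the set $S$, which is never defined here; your reading of $r_{_H}\in S$ as encoding the transversal crossing of $\mathcal H_0^2$ and $\mathcal P_0$ is the natural one, and you are right that this is the genuinely delicate step --- both curves are decreasing on $(1,\alpha^{-1})$ and $\mathcal H_0^2-\mathcal P_0$ is negative at both endpoints of that interval, so crossings come in pairs and transversality is not automatic but must be imposed (or verified in \cite{ShW-2}) exactly as you state.
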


Lemma \ref{hopf} indicated that the positive equilibrium $E_*(m^*,a^*)$ of system \eqref{eq_ma} is stable to homogeneous perturbations when $\mathcal{H}_{0}^2(r)<\mathcal{P}_{0}(r)$. Since system \eqref{eq_ma} is a special case when $\tau=0$ of system \eqref{eq_ma_tau}, our next work is to discuss the stability of $E_*(m^*,a^*)$ when $\tau>0$. To ensure our stability analysis valid, we make the following
assumption:
$$
\textsc{(H2)}~~~~~~\quad~~~ \mathcal{H}_{0}^2(r)<\mathcal{P}_{0}(r).~~~~~~~~
$$

Hence, we let $\pm i\omega(\omega>0)$ be
solutions of Eq.\eqref{c-eq2}, then we have

$$-\gamma\omega^2+i T_n \omega+(i\omega B+M_n)e^{-i\omega\tau}+D_n=0.$$
Separating the real and imaginary parts, it follows that
\begin{equation}\label{re_im}
\begin{cases}
M_n\cos\omega\tau +\omega B\sin\omega\tau= \gamma\omega^2-D_n,\\
M_n\sin\omega\tau -\omega B\cos\omega\tau =T_n\omega . \\
\end{cases}
\end{equation}
that is
\begin{equation}\label{omega}
\gamma^2\omega^4+(T_n^2-2\gamma D_n-B^2)\omega^2+D_n^2-M^2_n=0.
\end{equation}
Let $z=\omega^2$. Then \eqref{omega} can be converted to

\begin{equation}\label{z_eq}
\gamma^2 z^2+(T_n^2-2\gamma D_n-B^2)z+D_n^2-M^2_n=0.
\end{equation}
where $T_n^2-2\gamma D_n-B^2>0$ can be deduced by \textsc{(H2)}. Solving Eq.\eqref{z_eq} for $z$, we have
\begin{equation}\label{z_n}
z_n=\cfrac{-(T_n^2-2\gamma D_n-B^2)+\sqrt{(T_n^2-2\gamma D_n-B^2)^2-4\gamma^2(D_n^2-M_n^2)}}{2\gamma^2},
\end{equation}

Clearly, $D_0+M_0>0$, $D_0-M_0<0$, then $z_0=\omega_0^2$ is always exists, and $(\omega_0, \tau^j_0)$ always satisfies the characteristic equation \eqref{c-eq2}. This corresponds to a spatially homogeneous Hopf bifurcation.
In the following, we shall look for the spatially inhomogeneous Hopf bifurcation.
Note that
\begin{equation}
D_n+M_n=d\cfrac{n^4}{l^4}+(d\cfrac{\alpha}{a^*}-r^2 a^{*2}m^*)\cfrac{n^2}{l^2}+ar(r-1)a^*
\end{equation}
we can always choose a set of parameters ${d, \alpha, r}$ appropriately such that $D_n+M_n>0$ for all $n\in \mathbb{N}, l>0$. Hence, denote
\begin{equation*}
  \Gamma=\left\{(d,\alpha,r)|~~ D_n+M_n>0~\text{for all}~n\in\mathbb{N}, l>0\right\}
\end{equation*}
Now, the existence of $z_n$ is determined by the signal of $D_n-M_n$ when $(d,\alpha,r)\in \Gamma$. If  $D_n-M_n<0$, then  the
(n+1)th equation of \eqref{c-eq2} has a pair of simple pure imaginary $\pm i\omega_n$, and if $D_n-M_n>0$,  the
(n+1)th equation of \eqref{c-eq2} has no pure imaginary.

Define
\begin{equation}\label{l_n}
l_n=n\cfrac{1}{\sqrt{S(d,\alpha,r)}},~~~~ n\in\mathbb{N}.
\end{equation}
where
\begin{equation*}
   S(d, \alpha,r)=
-\cfrac{1}{2}\left(\frac{\alpha}{a^*}+\cfrac{ r^2a^{*2}m^*}{d}\right)+
\cfrac{1}{2d}\sqrt{(d\frac{\alpha}{a^*}+r^2a^{*2}m^*)^2+4d\alpha r (r-1)a^*}.
\end{equation*}
Then for $l_n<l<l_{n+1}$, and $1\leq {n_1}\leq n$, we have
$$
\cfrac{n_1^2}{l^2}<S(d, \alpha,r)
$$
which yields to $D_{n_1}-M_{n_1}<0$.
Hence, we can find a series of root $z_{{n_1}}$ of Eq.\eqref{z_n} and critical values $\tau_{{n_1}}^j$ satisfies

\begin{equation}\label{tau}
\tau_{{n_1}}^j=\begin{cases}
\cfrac{1}{\omega_{n_1}}\Big(\arccos\cfrac{(\gamma M_{{n_1}}-BT_{{n_1}} )\omega_{{n_1}}^{2}-M_{{n_1}}D_{n_1}}{M_{n_1}^2+\omega_{n_1}^2B^2}+2j\pi\Big),&\sin\omega_{n_1}\tau_{n_1}^j >0 \\
\cfrac{1}{\omega_{n_1}}\Big(-\arccos\cfrac{(\gamma M_{n_1}-BT_{n_1} )\omega_{n_1}^{2}-M_{n_1}D_{n_1}}{M_{n_1}^2+\omega_{n_1}^2B^2}+2(j+1)\pi\Big), &\sin\omega_{n_1}\tau_{n_1}^j <0
       \end{cases}~~0\leq{n_1}\leq n,~j\in\mathbb{N}_0.
\end{equation}
such that Eq.\eqref{c-eq2} has a pair of  purely imaginary roots $\pm i\omega_{n_1}$.

Following the work of \cite{Cooke}, it is easy to verify that the following transversality condition holds.
\begin{lemma}\label{trans_tau}
Suppose that \textsc{(H1)} and \textsc{(H2)} are satisfied, $(d,\alpha,r)\in \Gamma$, and $l\in(l_n, l_{n+1}]$ with $l_n$ is defined as in \eqref{l_n}. Then
$$
\cfrac{\partial}{\partial\tau}\beta(\tau^j_{n_1},d)>0,~~for~0\leq {n_1}\leq n, j\in\mathbb{N}_0,
$$
where $\beta(\tau,d)= ~\textrm{Re}~ \lambda(\tau,d)$.
\end{lemma}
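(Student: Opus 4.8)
The plan is to run the classical Cooke-type implicit-differentiation argument (cf. \cite{Cooke}) on the reciprocal $(d\lambda/d\tau)^{-1}$, so that no denominators have to be cleared, and then to exploit the identities already extracted in \eqref{re_im} and \eqref{omega} in order to reduce the sign of the transversality quantity to the sign of $D_{n_1}-M_{n_1}$.

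First I would note that, since $i\omega_{n_1}$ is a simple root of the $(n_1+1)$st factor of \eqref{c-eq2} at $\tau=\tau_{n_1}^j$, the implicit function theorem produces a $C^1$ branch $\lambda=\lambda(\tau)$ near $\tau_{n_1}^j$ with $\lambda(\tau_{n_1}^j)=i\omega_{n_1}$. Writing that factor as $P_{n_1}(\lambda)+Q_{n_1}(\lambda)e^{-\lambda\tau}=0$ with $P_{n_1}(\lambda)=\gamma\lambda^2+T_{n_1}\lambda+D_{n_1}$ and $Q_{n_1}(\lambda)=B\lambda+M_{n_1}$, differentiating in $\tau$ and substituting $e^{-\lambda\tau}=-P_{n_1}(\lambda)/Q_{n_1}(\lambda)$ to remove the exponential gives
\begin{equation*}
\left(\frac{d\lambda}{d\tau}\right)^{-1}=\frac{P_{n_1}'(\lambda)}{-\lambda P_{n_1}(\lambda)}+\frac{Q_{n_1}'(\lambda)}{\lambda Q_{n_1}(\lambda)}-\frac{\tau}{\lambda}.
\end{equation*}

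Next I would set $\lambda=i\omega_{n_1}$ and take real parts; the term $-\tau/\lambda$ is purely imaginary and drops out. After rationalizing the two remaining fractions, the key observation is that $M_{n_1}^2+B^2\omega_{n_1}^2=(\gamma\omega_{n_1}^2-D_{n_1})^2+T_{n_1}^2\omega_{n_1}^2$ — which is exactly what one gets by squaring and adding the two equations in \eqref{re_im}, equivalently \eqref{omega} — so both terms acquire this common positive denominator. Collecting numerators leaves
\begin{equation*}
\left.\textrm{Re}\left(\frac{d\lambda}{d\tau}\right)^{-1}\right|_{\tau=\tau_{n_1}^j}=\frac{2\gamma^2 z_{n_1}+\big(T_{n_1}^2-2\gamma D_{n_1}-B^2\big)}{(\gamma\omega_{n_1}^2-D_{n_1})^2+T_{n_1}^2\omega_{n_1}^2},\qquad z_{n_1}=\omega_{n_1}^2 .
\end{equation*}
Since $z_{n_1}$ is the larger (``$+$'') root of the quadratic \eqref{z_eq}, given by \eqref{z_n}, the numerator equals $\sqrt{(T_{n_1}^2-2\gamma D_{n_1}-B^2)^2-4\gamma^2(D_{n_1}^2-M_{n_1}^2)}$, which is strictly positive: for $1\le n_1\le n$ the hypothesis $l\in(l_n,l_{n+1}]$ forces $n_1^2/l^2<S(d,\alpha,r)$, hence $D_{n_1}-M_{n_1}<0$, while $(d,\alpha,r)\in\Gamma$ gives $D_{n_1}+M_{n_1}>0$ (both inequalities are immediate from \textsc{(H1)} when $n_1=0$), so $D_{n_1}^2-M_{n_1}^2<0$. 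Therefore $\textrm{Re}(d\lambda/d\tau)^{-1}>0$, and since $\textrm{Re}\,w$ and $\textrm{Re}(1/w)$ always carry the same sign, $\frac{\partial}{\partial\tau}\beta(\tau_{n_1}^j,d)>0$.

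The argument is routine; the one delicate point is the final algebraic simplification — recognizing the common denominator through the identity of \eqref{re_im} is precisely what makes the sign of the numerator transparent — so that is the step I would double-check most carefully. I would also remark that the computation is independent of the branch index $j$ (and of which sign branch in \eqref{tau} one lands on), because $\tau$ cancels out entirely, which is consistent with the uniform conclusion of the lemma.
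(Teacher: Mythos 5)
Your proposal is correct and follows essentially the same route as the paper's proof: implicit differentiation of the characteristic factor, passing to $(d\lambda/d\tau)^{-1}$, eliminating $e^{-\lambda\tau}$ via the characteristic equation, discarding the purely imaginary $-\tau/\lambda$ term, combining the two fractions over the common denominator $M_{n_1}^2+B^2\omega_{n_1}^2=(\gamma\omega_{n_1}^2-D_{n_1})^2+T_{n_1}^2\omega_{n_1}^2$ obtained from \eqref{re_im}, and recognizing the numerator as the (strictly positive, since $D_{n_1}^2-M_{n_1}^2<0$) square root of the discriminant via the ``$+$'' root formula \eqref{z_n}. The only differences are cosmetic (the $P,Q$ notation and the explicit remark on simplicity of the root), so nothing further is needed.
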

\begin{proof}
Substituting $\lambda(\tau,d)$ into Eq.\eqref{c-eq2} and taking the derivative with respect to $\tau$ on both side, we obtain that
$$
\Big(2\gamma\lambda+T_{n_1}+Be^{-\lambda\tau}-\tau(B\lambda+M_{n_1})e^{-\lambda\tau}\Big)\cfrac{\text{d}\lambda}{\text{d}\tau}-\lambda(B\lambda+M_{n_1})e^{-\lambda\tau}=0.
$$
Thus
$$
\left(\cfrac{\text{d}\lambda}{\text{d}\tau}\right)^{-1}=\cfrac{2\gamma\lambda+T_{n_1}+Be^{-\lambda\tau}-\tau(B\lambda+M_{n_1})e^{-\lambda\tau}}{\lambda(B\lambda+M_{n_1})e^{-\lambda\tau}}.
$$
By Eq.\eqref{c-eq2} and Eq.\eqref{re_im}, we have
\begin{equation*}
\begin{array}{ll}
\text{Re}\Big(\cfrac{\text{d}\lambda}{\text{d}\tau}\Big)^{-1}\Big|_{\tau =\tau_{n_1}^j}
&=\text{Re}\Big[\cfrac{(2\gamma\lambda+T_{n_1})e^{\lambda\tau}}{\lambda(B\lambda+M_{n_1})}+\cfrac{B}{\lambda(B\lambda+M_{n_1})}\Big]_{\tau =\tau_{n_1}^j}\\
&=\text{Re}\Big[\cfrac{(2\gamma\lambda+T_{n_1})e^{\lambda\tau}}{-\lambda(\gamma\lambda^2+T_{n_1}\lambda+D_{n_1})}+\cfrac{B}{\lambda(B\lambda+M_{n_1})}\Big]_{\tau =\tau_{n_1}^j}\\
&=\cfrac{2\gamma^2\omega_{n_1}^2-2\gamma D_{n_1}+T^2_{n_1}}{(\gamma\omega_{n_1}^2-D_{n_1})^2+\omega_{n_1}^2 T_{n_1}}+\cfrac{-B^2}{B^2\omega_{n_1}^2+M_{n_1}^2}\\
&=\cfrac{\sqrt{(T_{n_1}^2-2\gamma D_{n_1}-B^2)^2-4\gamma^2(D_{n_1}^2-M_{n_1}^2)}}{B^2\omega_{n_1}^2+M_{n_1}^2}.
\end{array}
\end{equation*}
Since $\text{Sign} ~\beta(\tau,d)=\text{Sign} ~\beta^{-1}(\tau,d)$, the lemma follows immediately.
\end{proof}

Let $\tau_0$ be the smallest value of $\tau^j_{n_1}$, that is
$$
\tau_0=\min\{\tau_{n_1}^j(l), 0\leq{n_1}\leq n,  j\in \mathbb{N}_0, ~\text{and}~l\in(l_n, l_{n+1}] ~\text{is defined as}~ \eqref{l_n}\}.
$$

Summarizing the above analysis, we have the following result.
\begin{theorem}
Suppose that \textsc{(H1)} and \textsc{(H1)} are satisfied, $(d,\alpha,r)\in \Gamma$, and $l_n$ is defined as in \eqref{l_n}. Then
 \begin{enumerate}
   \item If $l\in(l_n, l_{n+1}]$, there exists $n+1$ series of points $\{\tau_{n_1}^j\}$ such that the system \eqref{eq_ma_tau} undergoes a Hopf bifurcation at $\tau=\tau_{n_1}^j, 0\leq{n_1}\leq n$, $j\in\mathbb{N}_0$.
   \item Moreover, all the roots of Eq.\eqref{c-eq2} have negative real parts for
$\tau\in[0,\tau_0)$, and Eq.\eqref{c-eq2}
has at least one pair of conjugate complex roots with positive real parts for $\tau>\tau_0$. Especially for $l\leq l_1$, the Hopf bifurcation only occurs when $\tau=\tau_{0}^j ,j\in\mathbb{N}_0$ which corresponds to a spatially homogeneous periodic solution.
 \end{enumerate}
\end{theorem}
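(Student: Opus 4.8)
The plan is to assemble the theorem from the computations already in place before the statement, supplying only the Routh--Hurwitz criterion at $\tau=0$ and a continuity-plus-monotonicity argument for the roots of \eqref{c-eq2} as $\tau$ grows.

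For the first assertion, fix $l\in(l_n,l_{n+1}]$. For each $n_1$ with $0\le n_1\le n$ one checks $n_1^2/l^2<S(d,\alpha,r)$: this is immediate for $n_1=0$ since $S>0$, and for $1\le n_1\le n$ it follows from $l>l_n$ and the definition \eqref{l_n}, giving $n_1^2/l^2\le n^2/l^2<n^2/l_n^2=S$. As recorded before the statement, this inequality is equivalent to $D_{n_1}-M_{n_1}<0$; combined with $(d,\alpha,r)\in\Gamma$, so $D_{n_1}+M_{n_1}>0$, it yields $D_{n_1}^2-M_{n_1}^2<0$, hence \eqref{z_eq} has the unique positive root $z_{n_1}$ of \eqref{z_n}, and $\pm i\omega_{n_1}$ with $\omega_{n_1}=\sqrt{z_{n_1}}>0$ solves $E_{n_1}(\lambda,\tau,d)=0$ exactly at the $\tau=\tau_{n_1}^j$ given by \eqref{tau} (the resolution of \eqref{re_im}). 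Lemma \ref{trans_tau} furnishes $\partial_\tau\beta(\tau_{n_1}^j,d)>0$; moreover the identity $(\mathrm{d}\lambda/\mathrm{d}\tau)^{-1}=\partial_\lambda E_{n_1}/[\lambda(B\lambda+M_{n_1})e^{-\lambda\tau}]$ used in its proof, being finite and nonzero at $i\omega_{n_1}$, shows $\partial_\lambda E_{n_1}(i\omega_{n_1},\tau_{n_1}^j,d)\neq0$, so $i\omega_{n_1}$ is a simple root of $E_{n_1}$, while uniqueness of $z_{n_1}$ excludes any further purely imaginary root of that mode. The standard Hopf bifurcation theorem for delayed reaction--diffusion equations then gives a Hopf bifurcation at each $\tau_{n_1}^j$, i.e.\ the $n+1$ families.

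For the second assertion, start at $\tau=0$, where \eqref{c-eq2} reduces to $\gamma\lambda^2+(T_n+B)\lambda+(D_n+M_n)=0$. A short computation using $\alpha+m^*=\alpha/a^*$ identifies the inequality $T_0+B>0$ with \textsc{(H2)} (i.e.\ $\mathcal{H}_0^2(r)<\mathcal{P}_0(r)$); since $T_n+B=(T_0+B)+(1+\gamma d)n^2/l^2$, this gives $T_n+B>0$ for all $n$, while $D_n+M_n>0$ on $\Gamma$ (with $D_0+M_0=M_0>0$ by \textsc{(H1)}). Routh--Hurwitz then shows every root of \eqref{c-eq2} has negative real part at $\tau=0$; in addition $E_n(0,\tau,d)=D_n+M_n>0$ for all $\tau$, and direct substitution rules out a nonzero purely imaginary root at $\tau=0$. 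As the roots move continuously with $\tau$ and can reach the imaginary axis only at some $\tau_{n_1}^j$, whose minimum is $\tau_0$, all roots have negative real parts for $\tau\in[0,\tau_0)$. At $\tau=\tau_0$ a conjugate pair lies on the imaginary axis, and because $\partial_\tau\beta(\tau_{n_1}^j,d)>0$ holds at \emph{every} critical value (Lemma \ref{trans_tau}), every crossing is from left to right; hence the number of roots with positive real part is non-decreasing in $\tau$, equals at least $2$ for $\tau>\tau_0$, and just past $\tau_0$ consists of a genuine complex-conjugate pair (since $\omega_{n_1}>0$), which gives the stated instability. Finally, if $l\le l_1$ then $n_1^2/l^2\ge1/l^2\ge S$ for every $n_1\ge1$, so $D_{n_1}-M_{n_1}\ge0$ and thus $D_{n_1}^2-M_{n_1}^2\ge0$, whence \eqref{z_eq} has no positive root for $n_1\ge1$; the only purely imaginary roots come from $n_1=0$, whose eigenfunction $\xi_0\equiv1$ is spatially constant, so the Hopf bifurcations occur only at $\tau=\tau_0^j$ and the bifurcating periodic orbits are spatially homogeneous.

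The only genuinely delicate point is the monotonicity of the count of right-half-plane roots, which relies on the one-sided transversality at \emph{all} critical values in Lemma \ref{trans_tau} together with the exclusion of a zero root by $D_n+M_n>0$; one must also observe that the pair crossing at $\tau_0$ is truly complex because $\omega_{n_1}=\sqrt{z_{n_1}}>0$. The remainder is routine bookkeeping with the coefficients \eqref{TDMB}, plus the (generic) non-resonance required by the Hopf theorem, namely that $\pm i\omega_{n_1}$ is not simultaneously an eigenvalue of another spatial mode at $\tau_{n_1}^j$.
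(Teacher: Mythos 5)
Your proposal is correct and follows essentially the same route as the paper, which states this theorem as a summary of the preceding analysis (existence of the unique positive root $z_{n_1}$ when $D_{n_1}-M_{n_1}<0$ and $(d,\alpha,r)\in\Gamma$, the critical delays \eqref{tau}, the transversality Lemma \ref{trans_tau}, and stability at $\tau=0$ under \textsc{(H2)}). You merely make explicit the routine details the paper leaves implicit — the Routh--Hurwitz check $T_n+B>0$, $D_n+M_n>0$ at $\tau=0$, simplicity of $\pm i\omega_{n_1}$, and the non-decreasing count of right-half-plane roots — so no further comparison is needed.
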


\begin{remark}
  The condition \textsc{(H2)} ensure that the positive spatially homogeneous steady state is stable to a linear homogeneous perturbation when $\tau=0$. That is, the Hopf bifurcation was entirely induced by delay $\tau$. Biologically, the population will have a periodic oscillation if the digestion period $\tau$ is greater than a critical value $\tau_0$.
\end{remark}

For the Turing instability to be realized and the spatial patterns to form, the real part of eigenvalue $\lambda$ of \eqref{c-eq2} must be greater than zero for some $n\neq 0$, moreover, there exists a real eigenvalue $\lambda^T$ pass through the origin from the left side of the complex plane to the right side. That is, if the system undergoes a  Turing bifurcation, then the characteristic equation has a simple zero eigenvalue. Hence, Eq.\eqref{c-eq2} can be written as
\begin{equation}\label{DM}
h(d,n^2):=D_n+M_n=0
\end{equation}
Clearly, Eq.\eqref{DM} is a quadratic equation with $n^2$, the critical $n^2$ can be obtained by the following formula
\begin{equation}\label{n_c}
n_2^2=\cfrac{l^2}{2d}\left(\cfrac{m^*}{(1+m^*)^2}-\cfrac{d \alpha}{a^*}\right).
\end{equation}
and the steady state is marginally stable at $n=n_2$ when
\begin{equation}\label{d_nc}
h(d,n_2^2)=0
\end{equation}
Solving \eqref{d_nc} for $d$, we can get that
\begin{equation}\label{d_0}
d_0(\alpha,r)=\cfrac{\alpha(r-1)(1-\alpha r)^2}{(1-\alpha)^3(2\sqrt{1-\alpha r}+2-\alpha r)}
\end{equation}

Now we are in the position to investigate the Turing instability that driven by diffusion coefficient $d$. Using the similar method in Lemma \ref{trans_tau}, we can obtain the following transversality without difficulty.
\begin{lemma}\label{trans_d}
Suppose that \textsc{(H1)} and \textsc{(H2)} are satisfied. Then
$$
\cfrac{\partial}{\partial d}\alpha(\tau, d_0)<0.
$$
where $\alpha(\tau, d)$ is the real eigenvalue of the characteristic
equation \eqref{c-eq2} .
\end{lemma}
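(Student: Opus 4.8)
The plan is to differentiate the characteristic equation \eqref{c-eq2} implicitly along the critical mode $n=n_2$, exactly as in the proof of Lemma~\ref{trans_tau} but now with $d$ as the bifurcation parameter. By \eqref{d_nc}, the branch $n=n_2$ (with $n_2$ given by \eqref{n_c}) carries the root $\lambda=0$ at $d=d_0$, i.e. $E_{n_2}(0,\tau,d_0)=D_{n_2}+M_{n_2}=h(d_0,n_2^2)=0$. From $E_{n_2}=\gamma\lambda^2+T_{n_2}\lambda+(B\lambda+M_{n_2})e^{-\lambda\tau}+D_{n_2}$ one gets
\[
\partial_\lambda E_{n_2}(0,\tau,d_0)=T_{n_2}+B-\tau M_{n_2},
\]
the same quantity that occurred in Lemma~\ref{trans_tau}. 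I will show below that this is strictly positive; granting that, the implicit function theorem produces a unique $C^1$ branch $\lambda=\alpha(\tau,d)$ of roots of \eqref{c-eq2} with $\alpha(\tau,d_0)=0$, which is real-valued (since $E_{n_2}$ has real coefficients and the root at $d_0$ is real) and a simple root, and differentiating $E_{n_2}(\alpha(\tau,d),\tau,d)\equiv 0$ in $d$ yields
\[
\frac{\partial\alpha}{\partial d}(\tau,d_0)=-\,\frac{\partial_d E_{n_2}(0,\tau,d_0)}{\partial_\lambda E_{n_2}(0,\tau,d_0)}.
\]

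Next I compute both quantities from \eqref{TDMB}, holding $n_2$ fixed. Among the coefficients in \eqref{TDMB} only $T_n$ and $D_n$ depend on $d$, and the $\partial_d T_{n_2}$ contribution to $\partial_d E_{n_2}$ carries a factor $\lambda$ that vanishes at $\lambda=0$; hence $\partial_d E_{n_2}(0,\tau,d_0)=\partial_d D_{n_2}=(\alpha+m^*+n_2^2/l^2)\,n_2^2/l^2=D_{n_2}/d_0>0$, using $n_2\neq 0$ for a Turing mode and \textsc{(H1)}. For the denominator, $M_{n_2}=-D_{n_2}<0$ at $d=d_0$, so $-\tau M_{n_2}=\tau D_{n_2}\ge 0$, and it remains only to prove $T_{n_2}+B>0$. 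This is precisely where \textsc{(H2)} enters: since $ra^*=\mathcal{H}_0(r)$ and $m^*=\alpha(r-1)/(1-\alpha r)$, a short computation gives $T_0+B=\alpha+m^*\bigl(1-\gamma\mathcal{H}_0^2(r)\bigr)$, and the chain of equivalences $T_0+B>0 \Leftrightarrow \alpha/m^*+1>\gamma\mathcal{H}_0^2(r) \Leftrightarrow r(1-\alpha)/(r-1)>\gamma\mathcal{H}_0^2(r) \Leftrightarrow \mathcal{P}_0(r)>\mathcal{H}_0^2(r)$ shows that $T_0+B>0$ is equivalent to \textsc{(H2)}; then $T_{n_2}+B=(T_0+B)+(1+\gamma d)\,n_2^2/l^2>0$. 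Therefore $\partial_\lambda E_{n_2}(0,\tau,d_0)=T_{n_2}+B+\tau D_{n_2}>0$ and
\[
\frac{\partial\alpha}{\partial d}(\tau,d_0)=-\,\frac{D_{n_2}/d_0}{\,T_{n_2}+B+\tau D_{n_2}\,}<0,
\]
which is the assertion.

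The one point that needs care is the status of the critical wavenumber $n_2$. Although $n_2$ in \eqref{n_c} depends on $d$, the eigenvalue $\alpha(\tau,d)$ in the statement is the eigenvalue of the \emph{fixed} Fourier mode $\xi_{n_2}$ that becomes neutral at $d=d_0$, so $n_2$ is held constant throughout the differentiation above, just as $n_1$ was held fixed in Lemma~\ref{trans_tau}; equivalently, since $n_2^2$ minimizes $n^2\mapsto h(d,n^2)$, any term proportional to $dn_2^2/dd$ drops out by the envelope identity. Everything else is routine: the only genuine calculation is the verification that $T_0+B>0$ is equivalent to \textsc{(H2)}, carried out by substituting the explicit forms of $m^*$, $a^*$, $\mathcal{H}_0$ and $\mathcal{P}_0$, and there is no sign ambiguity at any other stage. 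Hence I expect the bookkeeping around $n_2$ — not any delicate estimate — to be the only subtle step of the argument.
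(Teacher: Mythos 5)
Your proof is correct and follows exactly the route the paper intends: the paper gives no details for Lemma~\ref{trans_d} beyond ``using the similar method in Lemma~\ref{trans_tau},'' i.e.\ implicit differentiation of $E_{n_2}(\lambda,\tau,d)$ at the simple zero root, which is what you carry out, and your sign computation $\partial_d E_{n_2}(0,\tau,d_0)=D_{n_2}/d_0>0$ together with $M_{n_2}=-D_{n_2}$ at $d=d_0$ matches the paper's setup. In fact you supply a step the paper only asserts (in the remark after Theorem~\ref{theorem_TH}), namely that $\partial_\lambda E_{n_2}(0,\tau,d_0)=T_{n_2}+B-\tau M_{n_2}>0$, via the clean equivalence $T_0+B>0\Leftrightarrow\textsc{(H2)}$, so your write-up is a complete version of the argument the authors sketch.
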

\begin{lemma}\label{d_lemma}
Suppose that \textsc{(H1)} and \textsc{(H2)} are satisfied. Then
\begin{enumerate}
 \item  If $d>d_0(\alpha, r)$, there is no Turing instability;
 \item  If $d<d_0(\alpha, r)$, there exists at least one $n\in\mathbb{N}$ such that $h(d,n^2)>0$, and the system undergoes a Turing bifurcation at $d=d_0$.
\end{enumerate}
\end{lemma}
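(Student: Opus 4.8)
The plan is to reduce both assertions to the sign behaviour of the single scalar quantity $h(d,n^{2})=D_{n}+M_{n}$ from \eqref{DM}, and then to analyse $h$ as a quadratic in each of the variables $n^{2}$ and $d$. First, putting $\lambda=0$ in the characteristic equation \eqref{c-eq2} gives $E_{n}(0,\tau,d)=D_{n}+M_{n}=h(d,n^{2})$, independently of $\tau$, so $\lambda=0$ solves \eqref{c-eq2} for the $n$-th mode exactly when $h(d,n^{2})=0$. Next, using the equilibrium identity $\alpha+m^{*}=\alpha/a^{*}$ and the reformulation of \textsc{(H2)} as $\gamma r^{2}{a^{*}}^{2}m^{*}<\alpha+m^{*}$ (equivalently $\mathcal{H}_{0}^{2}(r)<\mathcal{P}_{0}(r)$), one gets $T_{0}+B>0$, hence $T_{n}+B\geq T_{0}+B>0$ for every $n$ because $T_{n}$ increases with $n^{2}$ while $B$ is constant. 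Since at $\tau=0$ the $n$-th mode obeys $\gamma\lambda^{2}+(T_{n}+B)\lambda+h(d,n^{2})=0$, that mode is asymptotically stable iff $h(d,n^{2})>0$; and because $h(d,0)=M_{0}=ra^{*}m^{*}(1-\alpha r)>0$ under \textsc{(H1)}, the homogeneous mode never destabilises through a real eigenvalue. Therefore a Turing instability occurs iff $h(d,n^{2})<0$ for some $n\in\mathbb{N}$, the transversality of the associated zero-eigenvalue crossing being provided by Lemma \ref{trans_d}.

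Second, I rewrite $h(d,n^{2})$, using $\alpha+m^{*}=\alpha/a^{*}$ and $m^{*}/(1+m^{*})^{2}=r^{2}{a^{*}}^{2}m^{*}$, as the value at $y=n^{2}$ of the upward parabola
$$\widetilde{h}(d,y)=\frac{d}{l^{4}}\,y^{2}+\Big(\frac{d\alpha}{a^{*}}-r^{2}{a^{*}}^{2}m^{*}\Big)\frac{y}{l^{2}}+\alpha r(r-1)a^{*}.$$
Its constant term $\alpha r(r-1)a^{*}$ is positive by \textsc{(H1)} and $\widetilde{h}(d,y)\to+\infty$ as $y\to\infty$, so $\widetilde{h}(d,\cdot)$ takes negative values on $[0,\infty)$ iff (i) the vertex $y^{*}(d)=\tfrac{l^{2}}{2d}\big(r^{2}{a^{*}}^{2}m^{*}-\tfrac{d\alpha}{a^{*}}\big)$ is positive — which is precisely $n_{2}^{2}$ as given by \eqref{n_c} — and (ii) the discriminant $\Delta(d):=\big(\tfrac{d\alpha}{a^{*}}-r^{2}{a^{*}}^{2}m^{*}\big)^{2}-4d\,\alpha r(r-1)a^{*}$ is positive. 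Since $\Delta$ is itself an upward parabola in $d$ with two positive roots $d_{0}<d_{+}$, one has $\Delta(d)>0$ iff $d<d_{0}$ or $d>d_{+}$, and the branch $d>d_{+}$ is vacuous because there $y^{*}(d)<0$ and hence $\min_{y\geq0}\widetilde{h}(d,y)=\widetilde{h}(d,0)>0$. Thus for $d>d_{0}$ we get $\widetilde{h}(d,y)>0$ for all $y\geq0$, in particular $h(d,n^{2})>0$ for every $n$, which is part (1); for $d<d_{0}$ we have $y^{*}(d)>0$ and $\widetilde{h}(d,y^{*}(d))<0$, so $h(d,n^{2})<0$ for every $n\in\mathbb{N}$ whose square lies in the interval on which $\widetilde{h}(d,\cdot)$ is negative, which gives the Turing instability in part (2). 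At the critical value $d=d_{0}$ the mode $n_{2}$ satisfies $h(d_{0},n_{2}^{2})=0$; since then $M_{n_{2}}=-D_{n_{2}}<0$, we have $\partial_{\lambda}E_{n_{2}}(0,\tau,d_{0})=T_{n_{2}}+B-\tau M_{n_{2}}=(T_{n_{2}}+B)+\tau D_{n_{2}}>0$, so the zero eigenvalue is simple, and together with Lemma \ref{trans_d} it crosses transversally as $d$ passes through $d_{0}$ — a Turing bifurcation.

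Third, it remains to identify the smaller root of $\Delta(d)=0$ with the closed form \eqref{d_0}. Substituting $\alpha/a^{*}=\alpha r(1-\alpha)/(1-\alpha r)$, $r^{2}{a^{*}}^{2}m^{*}=\alpha(r-1)(1-\alpha r)/(1-\alpha)^{2}$ and $\alpha r(r-1)a^{*}=\alpha(r-1)(1-\alpha r)/(1-\alpha)$, the equation $\Delta(d)=0$ becomes $(dA-C)^{2}=4C(1-\alpha)d$ with $A=\alpha r(1-\alpha)/(1-\alpha r)$ and $C=\alpha(r-1)(1-\alpha r)/(1-\alpha)^{2}$; solving this quadratic for $d$ and noting that the radicand $A+2(1-\alpha)-2\sqrt{(1-\alpha)(A+1-\alpha)}$ equals the perfect square $\big(\sqrt{A+1-\alpha}-\sqrt{1-\alpha}\big)^{2}$ yields $d_{0}=\tfrac{C}{A^{2}}\big(\sqrt{A+1-\alpha}-\sqrt{1-\alpha}\big)^{2}$. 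Finally, using $A+1-\alpha=(1-\alpha)/(1-\alpha r)$ and the identity $(1-\sqrt{1-\alpha r})^{2}=\alpha^{2}r^{2}/(2\sqrt{1-\alpha r}+2-\alpha r)$ — a consequence of $(2-\alpha r-2\sqrt{1-\alpha r})(2-\alpha r+2\sqrt{1-\alpha r})=\alpha^{2}r^{2}$ — and simplifying collapses the expression exactly to \eqref{d_0}.

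The main obstacle I anticipate is the algebra of this last step: bringing the smaller root of $\Delta(d)=0$ into the compact form \eqref{d_0} relies on the perfect-square collapse and on rationalising $(1-\sqrt{1-\alpha r})^{2}$, and this is the only place where a computational slip is likely. A secondary, largely cosmetic point concerns part (2): the assertion that some \emph{integer} $n$ satisfies $h(d,n^{2})<0$ for every $d<d_{0}$ is clean only once $l$ is chosen so that $n_{2}$ in \eqref{n_c} is a positive integer — the standing hypothesis of the subsequent Turing--Hopf analysis — since for $d$ just below $d_{0}$ the interval on which $\widetilde{h}(d,\cdot)$ is negative is short and could otherwise contain no perfect square, whereas that interval grows without bound as $d\to0^{+}$.
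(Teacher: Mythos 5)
Your proof is correct and takes essentially the same route as the paper: the paper's one-sentence proof just evaluates $h$ at the minimizing wavenumber $n_2^2$ from \eqref{n_c} and notes that its sign flips at $d_0$, and your parabola/discriminant analysis in $y=n^2$, the identification of $d_0$ with the smaller root of $\Delta(d)=0$, and the algebra recovering \eqref{d_0} are precisely the detailed version of that computation (together with the Routh--Hurwitz step and the simplicity of the zero eigenvalue, which the paper records elsewhere). Your two extra observations — that the larger root $d_{+}$ is harmless because the vertex is negative there, and that part (2) implicitly requires the critical mode $n_2$ to be a positive integer (also, the printed ``$h(d,n^2)>0$'' in part (2) is a sign typo: the intended and proved inequality is $h(d,n^2)<0$) — are refinements of details the paper glosses over, not a different method.
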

\begin{proof}
It is easy to see $h(d, n_2^2)>0$ from the definition \eqref{DM} and \eqref{d_nc} when $d>d_0$ and $h(d, n_2^2)<0$ when $d<d_0$.
\end{proof}

\begin{remark}
  Noting that $d$ is only the diffusion coefficient of mussel, while the diffusion coefficient of algae is rescaled to $\frac{1}{\gamma}$. Lemma \ref{d_lemma} indicates that if mussel diffusivity is sufficiently large, there is no spatial patterns, but if it less than the threshold,
Turing instability will happenㄛ and we shall observe the spatial distribution of the two species. This result is also suitable for high dimensional space where the patterns are more complicated and interesting.
\end{remark}

The following Turing-Hopf bifurcation theorem is a direct result of the previous analysis.
\begin{theorem}\label{theorem_TH}
  Assume that \textsc{(H1)} and \textsc{(H2)} are satisfied, and $l\in(l_n, l_{n+1}]$ with $l_n$ is defined as in \eqref{l_n}.
Then
\begin{enumerate}
  \item the constant steady state $E_*(m^*,a^*)$ is locally asymptotically stable when $\tau<\tau_0$ and $d>d_0$.

  \item the $(n_1+1)$th equation of \eqref{c-eq2} has a pair of simple pure imaginary roots $\pm i\omega_{n_1}$, the $(n_2+1)$th equation of \eqref{c-eq2} has a simple zero when $\tau=\tau_{n_1}^j, d=d_0$, $j\in\mathbb{N}_0$, with $d_0$ is defined by \eqref{d_0}, $n_2$ is defined by \eqref{n_c} and $n_1$ is define as $0\leq n_1\leq n$, if $n_2>n$ or $0\leq n_1\leq n$, $n_1\neq n_2$, if $n_2\leq n$, and all other eigenvalues have non-zero real parts.

  \item the system
\eqref{eq_ma_tau} undergoes a Turing-Hopf bifurcation at $(\tau_{n_1}^j, d_0)$, where $n_1$ is well defined in (2).
\end{enumerate}

Moreover, if $l\leq l_1$, then the characteristic equation \eqref{c-eq2} only has a pair of imaginary roots $\pm i\omega_0$ with $n_1=0$ and a simple zero with $n_2>0$, and all other eigenvalues with $(\tau, d)=(\tau_0, d_0)$ have strictly negative real parts.

\end{theorem}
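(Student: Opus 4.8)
The plan is to reduce everything to the scalar transcendental equations $E_n(\lambda,\tau,d)=0$ of \eqref{c-eq2}: a complex number $\lambda$ is an eigenvalue of the linearization \eqref{eq_linear} precisely when $E_n(\lambda,\tau,d)=0$ for some $n\in\mathbb{N}_0$, with eigenfunction $\cos\frac{n}{l}x$, so it suffices to track, mode by mode, where the roots of $E_n$ lie relative to the imaginary axis. For assertion (1) I would start at $\tau=0$, where $E_n(\lambda,0,d)=\gamma\lambda^2+(T_n+B)\lambda+(D_n+M_n)$. A short computation shows that $\textsc{(H2)}$ is exactly the inequality $T_0+B>0$, and since $T_n$ is increasing in $n$ one gets $T_n+B>0$ for every $n$; meanwhile $d>d_0$ together with Lemma \ref{d_lemma} gives $D_n+M_n=h(d,n^2)>0$ for every $n\in\mathbb{N}_0$ (for $n=0$ this is $M_0>0$, which follows from $\textsc{(H1)}$). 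Routh--Hurwitz then puts every root in the open left half-plane at $\tau=0$. Because the roots of each $E_n$ depend continuously on $\tau$ and, the equation being of retarded type, cannot reach the imaginary axis from infinity, a root can cross only at one of the values $\tau_{n_1}^j$, the least of which is $\tau_0$ by definition; hence all roots of \eqref{c-eq2} keep negative real parts for $\tau\in[0,\tau_0)$, which is the asserted local asymptotic stability of $E_*$.

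For assertion (2) the Hopf part is already built into the construction: for $n=n_1$ with $0\le n_1\le n$ and $l\in(l_n,l_{n+1}]$ one has $D_{n_1}-M_{n_1}<0$ and $D_{n_1}+M_{n_1}>0$, so the constant term $D_{n_1}^2-M_{n_1}^2$ of \eqref{z_eq} is negative, $E_{n_1}$ admits the unique positive frequency $\omega_{n_1}=\sqrt{z_{n_1}}$, and $\pm i\omega_{n_1}$ solve $E_{n_1}(\cdot,\tau_{n_1}^j,d_0)=0$ by the choice \eqref{tau}. The transversality computation of Lemma \ref{trans_tau} yields $\text{Re}(\mathrm{d}\lambda/\mathrm{d}\tau)^{-1}>0$ there, hence $\partial_\lambda E_{n_1}\neq0$ at this point and $\pm i\omega_{n_1}$ are simple. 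Likewise $d=d_0$ makes $0$ a root of $E_{n_2}$, and Lemma \ref{trans_d}, which supplies a $C^1$ branch of real eigenvalues through $0$ with nonzero $d$-derivative, forces $\partial_\lambda E_{n_2}(0,\tau,d_0)\neq0$, so that zero is simple as well. To see that no further eigenvalue sits on the imaginary axis one checks: (i) each $E_n$ has at most the single admissible positive frequency $\sqrt{z_n}$, so an extra purely imaginary root could arise only from a resonance $\tau_{n_1}^j=\tau_{n}^{j'}$ with $n\neq n_1$, which is excluded at an admissible critical point; (ii) since $h(d_0,\cdot)$ is an upward parabola in $n^2$ whose minimum, attained at $n_2^2$ from \eqref{n_c}, equals zero, $h(d_0,n^2)>0$ for every integer $n\neq n_2$, so no other mode carries a zero eigenvalue; (iii) all remaining roots keep negative real parts by the $\tau$-continuation argument of assertion (1). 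The combinatorial condition on $n_1$ relative to $n_2$ merely records that the Hopf and Turing modes are distinct.

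Assertions (1)--(2) then verify hypothesis \textbf{(TH)} with $\mu=(\tau,d)$ and $\mu_0=(\tau_{n_1}^j,d_0)$: the simple conjugate pair $\beta_{n_1}(\mu)\pm i\omega_{n_1}(\mu)$ and the simple real root $\alpha_{n_2}(\mu)$ are $C^1$ in $\mu$ by the implicit function theorem, $\beta_{n_1}(\mu_0)=0$, $\omega_{n_1}(\mu_0)=\omega_{n_1}>0$, $\partial_\tau\beta_{n_1}(\mu_0)>0$ by Lemma \ref{trans_tau}, $\alpha_{n_2}(\mu_0)=0$, $\partial_d\alpha_{n_2}(\mu_0)<0$ by Lemma \ref{trans_d}, and all other eigenvalues are off the imaginary axis; this is precisely a Turing--Hopf bifurcation, giving assertion (3). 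For the addendum, when $l\le l_1$ the bound $n^2/l^2\ge 1/l_1^2=S(d_0,\alpha,r)$ valid for all $n\ge1$ forces $D_n-M_n\ge0$, so no mode with $n\ge1$ has a purely imaginary root; hence the only Hopf mode is $n_1=0$ at the delays $\tau_0^j$, the Turing zero belongs to a mode $n_2>0$, and since $\tau_0$ is the smallest Hopf delay while $d_0$ is the Turing threshold, the same continuation argument shows every eigenvalue other than $\pm i\omega_0$ and $0$ has strictly negative real part at $(\tau_0,d_0)$.

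The step I expect to be the main obstacle is the clause ``all other eigenvalues have non-zero real parts'': ruling out a second mode acquiring the frequency $\omega_{n_1}$ (a concealed double-Hopf resonance $\tau_{n_1}^j=\tau_n^{j'}$) is not automatic from the formulas and needs either a genericity hypothesis on $(\alpha,r,\gamma,l)$ or a careful restriction to the admissible critical points; one must also keep in mind that the Turing zero eigenvalue genuinely appears only when the minimizer $n_2$ of \eqref{n_c} is a positive integer, which is an implicit constraint on $l$ (or forces a small adjustment of $d_0$). Everything else is bookkeeping on top of Lemmas \ref{trans_tau}, \ref{trans_d}, \ref{d_lemma} and the continuity of the characteristic roots in $\tau$.
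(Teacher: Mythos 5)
Your proposal is correct and follows essentially the same route as the paper, which states this theorem as a direct consequence of the preceding analysis (the mode-by-mode characteristic equations \eqref{c-eq2}, Routh--Hurwitz stability at $\tau=0$ under \textsc{(H1)}--\textsc{(H2)}, the critical delays \eqref{tau} with Lemma \ref{trans_tau}, the Turing threshold \eqref{d_0} with Lemmas \ref{trans_d}--\ref{d_lemma}, and verification of \textbf{(TH)}); your observation that \textsc{(H2)} is exactly $T_0+B>0$ and your simplicity argument for the zero root match what the paper records in Remark 2.8. The caveats you flag (possible Hopf--Hopf resonances $\tau_{n_1}^j=\tau_{n'}^{j'}$ behind the clause ``all other eigenvalues have non-zero real parts,'' and the integrality of $n_2$) are genuine points the paper leaves implicit, so your reconstruction is, if anything, slightly more careful than the original.
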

\begin{remark}
Since $\cfrac{\partial}{\partial \lambda}E_{n_2}(0,\tau,d_0)=T_{n_2}+b-\tau M_{n_2}>0$ for any $\tau>0$, then $\lambda(\tau,d_0)=0$ is a simple root of characteristic equation \eqref{c-eq2}. This is determined by the model, in other words, $0$ may be a eigenvalue with multiplicity two for some other models(see \cite{AnJ}), in that case, there might exist a $\tau^*$ such that
$E_{n_2}(0,\tau^*,d_0)=0$
and $\cfrac{\partial}{\partial \lambda}E_{n_2}(0,\tau^*,d_0)=0$ , and if other eigenvalues have non-zero real part, the system will undergoes a Bogdanov-Takens bifurcation or even a Turing-Turing-Hopf bifurcation at
$(\tau^*, d_0)$ .
\end{remark}

\begin{figure}[htp]
\centering
\subfigure[] {\includegraphics[width=0.45\textwidth]{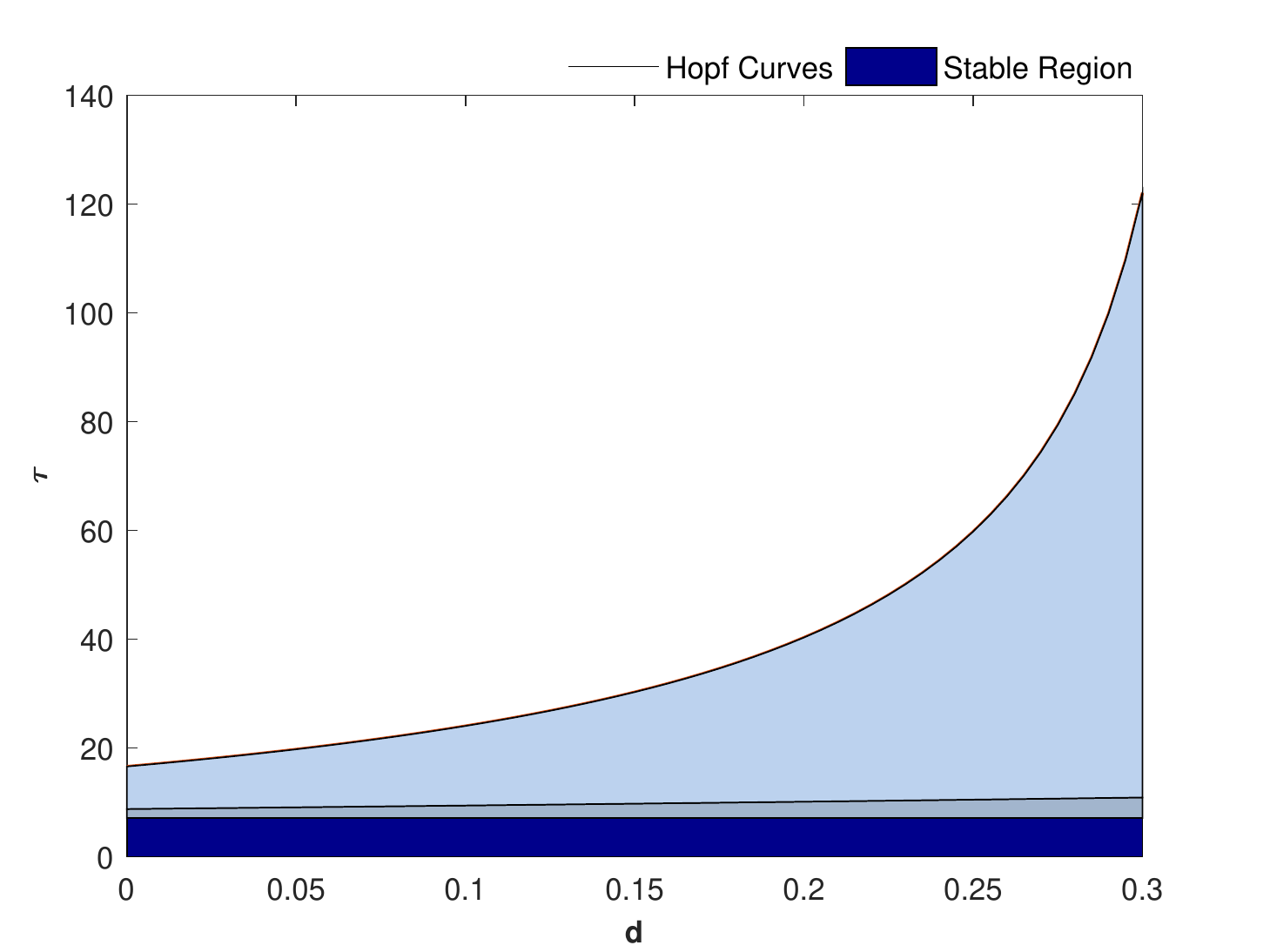}}
\subfigure[] {\includegraphics[width=0.45\textwidth]{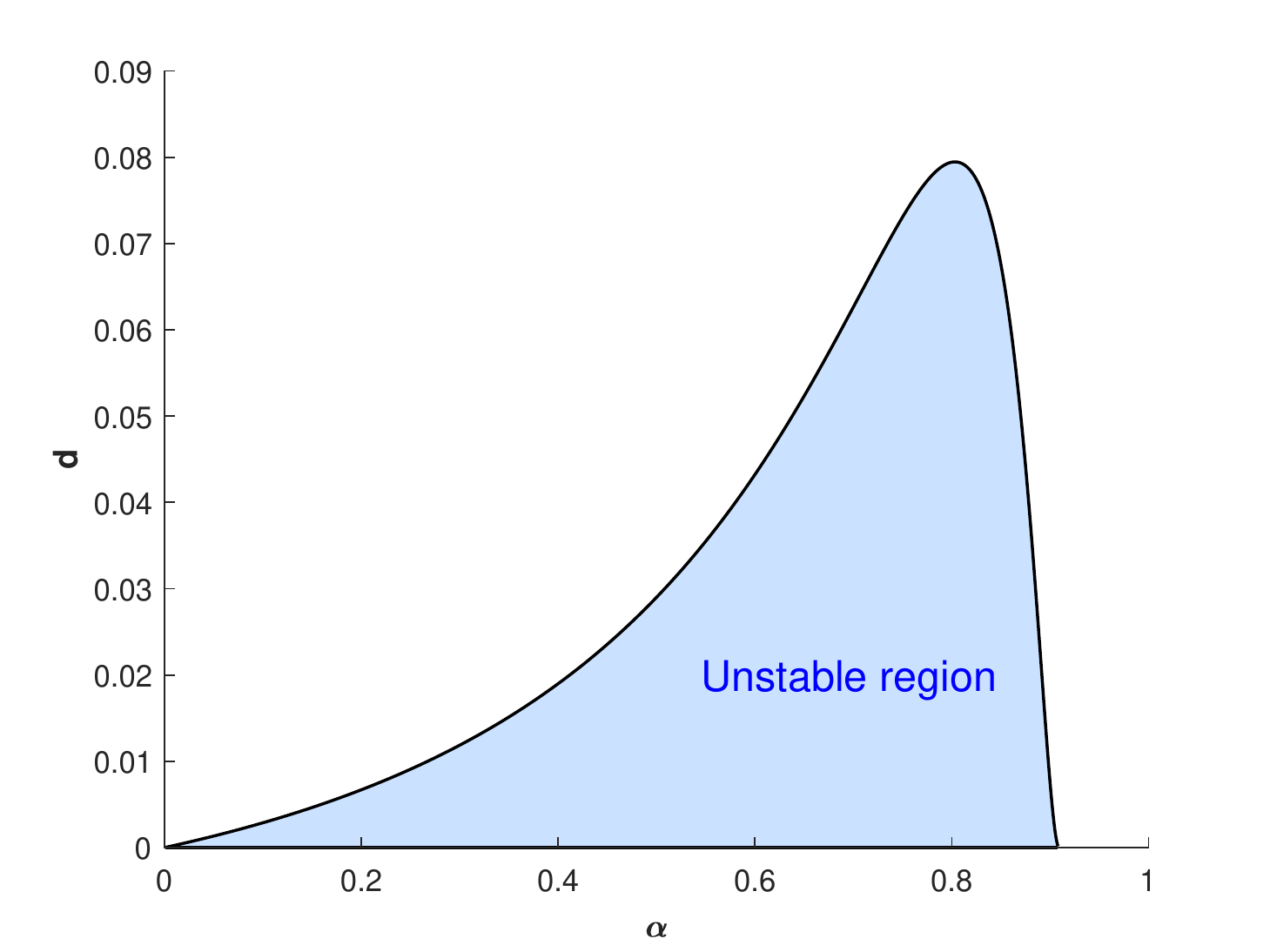}}
\caption{(a) The Hopf bifurcation curves and stable region in $d-\tau$ plane, the solid lines are Hopf bifurcation curves with $n=0, 1, 2$ from bottom to top respectively, and values of parameters are chosen as follows: $\gamma=4, r=1.1, \alpha=0.65$; (b) The critical curve of Turing bifurcation and unstable region in $\alpha-d$ plane with $r=1.1$.}
\label{fig-H-T}       
\end{figure}

Clearly, $\tau^0_{n_1}<\tau^j_{n_1}$ for all $j>0$, and through a mass of numerical simulations, we have observed the trend of $\tau^0_{n_1}$ as $n_1$ get bigger. The result reveals that the smallest value $\tau_0$ is always obtained when $n_1=0$. Fig.\ref{fig-H-T}(a)
is the geometric interpretation under the set of parameters that we used to run the numerical solutions in Section \ref{dynamic classifiction}. Hence, the first Turing-Hopf bifurcation point $(\tau_0, d_0)$ is $(\tau^0_0, d_0)$, our results below is the detailed analysis about this point and its neighborhood.

\section{Normal form of Turing-Hopf bifurcation}

In this section, we shall study the spatiotemporal dynamics of system \eqref{eq_ma_tau} by using the center manifold reduction \cite{LSW, Wu} and normal form theory \cite{AnJ, Far, SJL}. The amplitude equations are finally obtained to describe to dynamics near the critical Turing-Hopf bifurcation point, the truncated normal form is exactly the same to that of the ODE system with Hopf-Hopf bifurcation. In what follows, we will give a specific process and some explicit calculation formulas.

Let $\widetilde{m}(x,t)=m(x,t)-m^*$, $\widetilde{a}(x,t)=a(x,t)-a^*$, and $t\mapsto t/\tau$, dropping the tilde, then we have
\begin{equation}\label{eq_origin}
\begin{cases}
\cfrac{\partial m}{\partial t}=\tau[d\Delta m+r^2a^{*2}m^* m_t(-1)+r m^*a_t(-1)+f_1(m_t,a_t)], & x\in\Omega,~t>0,\\
\gamma \cfrac{\partial a}{\partial t}=\tau[\Delta a-a^*m-(\alpha+m^*)a+f_2(m_t,a_t)], & x\in\Omega,~t>0,\\
\cfrac{\partial m}{\partial n_1}=0,~\cfrac{\partial a}{\partial {n_1}}=0, & x\in\partial \Omega,~t>0,\\
   m(x,t)=m_0(x,t)-m^*,~ a(x,t)=a_0(x,t)-a^*, & x\in\Omega,-1\leq t\leq 0,
\end{cases}
\end{equation}
where
$$
m_t(\theta)=m(x,t+\theta),~a_t(\theta)=a(x,t+\theta),~~\theta\in [-1,0],
$$
and for $\phi_1, \phi_2\in \mathcal{C}:=C([-1,0],X_{\mathbb{C}})$
\begin{equation}\label{f1}
\begin{array}{ll}
f_1(\phi_1,\phi_2)=&r\phi_1(0)\phi_2(-1)-\cfrac{m^*}{(1+m^*)^3}\phi_1^2(-1)+\cfrac{1}{(1+m^*)^2}\phi_1(0)\phi_1(-1)\\
&+\cfrac{m^*}{(1+m^*)^4}\phi_1^3(-1)-\cfrac{1}{(1+m^*)^3}\phi_1(0)\phi_1^2(-1)+\mathcal{O}(4),\\
f_2(\phi_1,\phi_2)=&-\phi_1(0)\phi_2(0).
\end{array}
\end{equation}

In order to study the dynamics near the Turing-Hopf bifurcation,
we need to extend the domain of solution
operator to a space of some discontinuous:
$$
\mathcal{BC}:=\big\{\psi:[-1,0]\rightarrow X_{\mathbb{C}}~|~\psi~\text{is continuous on}~ [-1, 0), \exists\lim_{\theta\rightarrow 0^-}\psi(\theta)\in X_{\mathbb{C}}\big\}
$$

Let $\mu=\mu_0+\mu_{\varepsilon}$, where $\mu=(\tau, d)$, $\mu_0=(\tau_0, d_0)$, and $\mu_{\varepsilon}=(\tau_{\varepsilon}, d_{\varepsilon})$. Then system \eqref{eq_origin} undergoes a Turing-Hopf bifurcation at the equilibrium $(0,0)$ when $\mu_{\varepsilon}=(0,0)$ and we can rewrite system \eqref{eq_origin} in an abstract form in the space $\mathcal{BC}$ as
\begin{equation}\label{eq_abs_origin}
\cfrac{d}{dt}U(t)=A U_t+X_0\mathcal{F}(\mu_\varepsilon,U_t),
\end{equation}
where
\begin{equation*}
    X_0(\theta)=\begin{cases}
                    0 , &\theta\in[-1, 0),\\
                    I, \quad &\theta=0.
                    \end{cases}
\end{equation*}
and $A$ is a operator from $\mathcal{C}_0^1:=\big\{\varphi\in \mathcal{C}: \dot{\varphi}\in \mathcal{C}, \varphi(0)\in \text{dom}(\Delta)\big\}$ to $\mathcal{BC}$ \cite{Paz}, defined by
$$
A\varphi=\dot{\varphi}+X_0\big[\tau_0 D_0\Delta \varphi(0)+\tau_0 L_0(\varphi)-\dot{\varphi}(0)\big]
$$
with $D_0=D(\mu_0)$, $L_0:\mathcal{C}\rightarrow X_{\mathbb{C}}$ is a linear operator given by $L_0(\varphi)= L(\mu_0)(\varphi)$ with
$$ L(\mu)(\varphi)=L_1\varphi(0)+L_2\varphi(-1)$$
and $\mathcal{F}:\mathbb{R}^2 \times \mathcal{C}\to X_{\mathbb{C}}$ is a nonlinear operator and defined by
\begin{equation*}
\mathcal{F}(\mu_\varepsilon,\phi)=(\tau_0+\tau_{\varepsilon})\big[D\Delta\varphi(0) +L(\mu)(\varphi)+F(\mu_\varepsilon,\varphi)\big]-A\varphi(0)
\end{equation*}
with
\begin{equation}\label{F}
F(\mu_\varepsilon,\varphi)=(\tau_0+\tau_\varepsilon)(f_1(\varphi_1,\varphi_2),~\gamma^{-1}f_2(\varphi_1,\varphi_2))^{^T},
\end{equation}
where $f_1$ and $f_2$ are defined by \eqref{f1}.

We denote
$$
b_n=\cfrac{\cos (nx/l)}{\|\cos(nx/l)\|},~ ~\beta_n=\{\beta_n^1, \beta_n^2\}=\{(b_n, 0)^{T}, (0, b_n)^{T}\},
$$
where
$$
\|\cos(nx/l)\|=\left(\int_0^{l\pi}\cos^2(nx/l)\text{d}x\right)^{\frac{1}{2}}.
$$

For $\phi=(\phi^{^{(1)}},\phi^{^{(2)}})^{T}\in\mathcal{C}$, denote
$$
\phi_n=\langle \phi,\beta_n\rangle=\left(\langle \phi,\beta_n^1\rangle, \langle \phi,\beta_n^2\rangle\right)^{T}.
$$
Define $ A_{\varepsilon, n}$ as
\begin{equation}\label{An}
   A_{\varepsilon, n}(\phi_n(\theta)\beta_n)=\begin{cases}
                        \dot{\phi}_n(\theta)\beta_n,& \theta\in[-1,0), \\
                        \int_{-1}^{0}\text{d}\eta_n(\mu_\varepsilon,\theta)\phi_n(\theta)\beta_n ,\qquad &\theta=0,
                     \end{cases}
\end{equation}
where
$$
\int_{-1}^{0}\text{d}\eta_n(\mu_\varepsilon,\theta)\phi_n(\theta)=-\cfrac{n^2}{l^2} (\tau_0+\tau_\varepsilon)D\phi_n(0)+L_{\varepsilon,n}(\phi_n),
$$
with
$$
L_{\varepsilon, n}(\phi_n)=(\tau_0+\tau_\varepsilon)L_1\phi_n(0)+(\tau_0+\tau_\varepsilon)L_2\phi_n(-1),
$$
and
\begin{equation*}
    \eta_n(\mu_\varepsilon,\theta)=\begin{cases}\begin{array}{ll}
    -(\tau_0+\tau_\varepsilon)L_2, & \theta=-1,\\
    0, & \theta\in(-1,0),\\
    (\tau_0+\tau_\varepsilon)\left(L_1-\cfrac{n^2}{l^2}D\right), & \theta=0.
    \end{array}\end{cases}
\end{equation*}

Denote $A^*$ as the adjoint operator of $A$ on $\mathcal{C}^*:=C([0,1],X_{\mathbb{C}})$.
\begin{equation*}
A^*\psi(s)=\begin{cases}\begin{array}{ll}
-\dot{\psi}(s),~& s\in(0,1],\\
\sum_{n=0}^\infty\int_{-1}^0\psi_n(-\theta)\text{d}\eta_n^{^T}(0,\theta)\beta_n,~&s=0.
\end{array}\end{cases}
\end{equation*}
Now, we introduce the bilinear formal $(\cdot,\cdot)$ on $\mathcal{C}^*\times\mathcal{C}$
\begin{equation*}
(\psi,\phi)=\sum_{j_1,j_2=0}^\infty(\psi_{j_1},\phi_{j_2})\int_\Omega b_{j_1}b_{j_2}\text{d}x, ~k=1,2
\end{equation*}
where
$$
\psi=\sum_{n=0}^\infty \psi_n \beta_n\in\mathcal{C}^*,~\phi=\sum_{n=0}^\infty \phi_n \beta_n\in\mathcal{C},
$$
and
$$
\phi_n\in C:=C([-1,0],\mathbb{C}^2),~~\psi_n\in C^*:=C([0,1],\mathbb{C}^2).
$$
Notice that
 $$
 \int_\Omega b_{j_1}b_{j_2}\text{d}x=0~~\mbox{for}~~j_1\neq j_2,
 $$
we have
\begin{equation*}
(\psi,\phi)=\sum_{n=0}^\infty(\psi_n,\phi_n)|b_n|^2:=\sum_{n=0}^\infty(\psi_n,\phi_n)_n|b_n|^2,
\end{equation*}
where $(\cdot,\cdot)_n$(or $(\cdot,\cdot)$) is the bilinear form defined on $C^*\times C$
\begin{equation*}
(\psi_n,\phi_n)_n=\psi_n(0)\phi_n(0)-\int_{-1}^0\int_{\xi=0}^\theta\psi_n(\xi-\theta)
\text{d}\eta_n(0,\theta)\phi_n(\xi)\text{d}\xi.
\end{equation*}

Let $\{\phi_1(\theta)b_{n_1}, \phi_2(\theta)b_{n_2}\}$ and $\{\psi_1(s)b_{n_1}, \psi_2(s)b_{n_2}\}$ are the eigenfunctions of $A$ and its dual $A^*$ relative to $\Lambda=\{i\omega_0\tau_0, 0\}$ such that $\phi_1, \phi_2\in C$, $\psi_1, \psi_2\in C^*$ and
$$
(\psi_1,\phi_1)_1=1,~~(\psi_1,\overline{\phi}_1)_1=0,~~(\psi_2,{\phi}_2)_2=1
$$
By a straight forward calculation, we have
$$
\begin{array}{ll}
\phi_1(\theta)=q(0)e^{i\omega_0\tau_0\theta},& \psi_1(s)=M_1 q^*(0)e^{-i\omega_0\tau_0 s},\\
\phi_2(\theta)=p(0),&\psi_2(s)=M_2p^*(0),
\end{array}$$
where $q(0)=(1, q_1)^{^T}$, $q^*(0)=(q_2, 1)$, $p(0)=(1, p_1)^{^T}$, $p^*(0)=(p_2, 1)$ and
$$\begin{array}{ll}
&q_1=\cfrac{a^*}{i\gamma\omega_0+\alpha+m^*},~~q_2=\cfrac{i \gamma \omega_0+\alpha+m^*}{r m^*e^{-i\omega_0\tau_0}},~~p_1=\cfrac{-a^*}{\frac{n^2_2}{l^2}+\alpha+m^*},~~p_2=\cfrac{\frac{n^2_2}{l^2}+\alpha+m^*}{r m^*},\\
&M_1=\cfrac{1}{q_1+q_2+\tau_0q_2e^{-i\omega_0\tau_0}(r^2a^{*2}m^*+rm^*q_1)},~~M_2=\cfrac{1}{p_1+p_2+\tau_0r m^*p_2(ra^{*2}+p_1)}

\end{array}$$

Denote $\Phi_1=(\phi_1, \overline{\phi}_1)$, $\Psi_1=(\psi^{^T}_1, \bar{\psi}^{^T}_1)^{^T}$ and $\Phi_2=\phi_2$, $\Psi_2=\psi_2$. From the discussion  above, we know that the phase space $\mathcal{BC}$ can be decomposed as
$$
\mathcal{BC}=\mathcal{P}\bigoplus\text{Ker}\pi,
$$
where $\mathcal{P}$ is the is the 3-dimensional center subspace spanned by the basis eigenfunctions of the linear operator $A$ associated with the eigenvalues $\{\pm i\omega_0\tau_0, 0\}$ and $\text{Ker}\pi$ is the complementary space of $\mathcal{P}$ with $\pi: \mathcal{BC}\rightarrow \mathcal{P}$ is the projection defined by
$$
\pi\varphi=\sum^2_{k=1}\Phi_k(\Psi_k,<\varphi(\cdot),\beta_{n_k}>)_k \cdot \beta_{n_k}
$$
with $c\cdot \beta_{n_k}=c_1 \beta_{n_k}^1+c_2 \beta_{n_k}^2$ for $c=(c_1, c_2)^{_T}\in C$.

Then $U_t\in \mathcal{C}_0^1$ can be decomposed as
\begin{equation*}
\begin{split}
U_t(\theta)&=\sum^2_{k=1}\Phi_k(\theta)(\Psi_k,<U_t,\beta_{n_k}>)_k \beta_{n_k}+y(\theta)\\
&=\sum^2_{k=1}\Phi_k(\theta)\tilde{z}_k(t)\cdot \beta_{n_k}+y(\theta),
\end{split}
\end{equation*}
with $\tilde{z}_1=(z_1, \bar{z}_1)$, $\tilde{z}_2=z_2$, and $y\in Q^1:=\mathcal{C}_0^1\bigcap\text{Ker}\pi$. Then system \eqref{eq_abs_origin}
on $\mathcal{BC}$ is equivalent to the following system
\begin{equation}
\begin{split}
  & \dot{z}=Bz+\Psi(0)\left(\begin{array}{l}
     <\mathcal{F}(\mu_\varepsilon,\sum^2_{k=1}\Phi_k\tilde{z}_k(t)\cdot \beta_{n_k}+y), \beta_{n_1}>\\
      <\mathcal{F}(\mu_\varepsilon,\sum^2_{k=1}\Phi_k\tilde{z}_k(t)\cdot \beta_{n_k}+y), \beta_{n_2}>
   \end{array}\right),\\
  & \cfrac{d}{dt}y=A_{Q^1}y-(I-\pi)X_0\mathcal{F}\big(\mu_\varepsilon,\sum^2_{k=1}\Phi_k\tilde{z}_k(t)\cdot \beta_{n_k}+y\big),
\end{split}
\end{equation}
where $z=(z_1, \bar{z}_1, z_2)$, $B=\text{diag}(i\omega_0\tau_0, -i\omega_0\tau_0, 0)$, $\Psi=\text{diag}(\Phi_1, \Phi_2)$, and $A_{Q^1}$ is the restriction of $A$ as an operator from $Q^1$ to $\text{Ker}\pi$.

From the Theorem 3.2 in \cite{AnJ}, the normal forms of system \eqref{eq_ma_tau} up to three order near a Turing-Hopf singularity $\mu=\mu_0$ are obtained
\begin{equation}\label{z_norm}
\begin{array}{ll}
  \dot{z}_1=~~i\omega_0\tau_0z_1+&\cfrac{1}{2}f^{11}_{11}\alpha_1z_1+\cfrac{1}{2}f^{11}_{21}\alpha_2z_1+\cfrac{1}{6}g^{11}_{210}z^2_1\bar{z}_1
  +\cfrac{1}{6}g^{11}_{102}z_1z^2_2+h.o.t.\\
   \dot{\bar{z}}_1=-i\omega_0\tau_0z_1+&\cfrac{1}{2}f^{12}_{11}\alpha_1\bar{z}_1+\cfrac{1}{2}f^{12}_{21}\alpha_2\bar{z}_1
   +\cfrac{1}{6}g^{12}_{210}\bar{z}^2_1 z_1+\cfrac{1}{6}g^{12}_{102}\bar{z}_1z^2_2+h.o.t.\\
   z_2=&\cfrac{1}{2}f^{13}_{12}\alpha_1z_2+\cfrac{1}{2}f^{13}_{22}\alpha_2z_2+\cfrac{1}{6}g^{13}_{111}z_1\bar{z}_1z_2
   +\cfrac{1}{6}g^{13}_{003}z_2^3+h.o.t.
  \end{array}
\end{equation}
with $(\alpha_1,\alpha_2)=(\tau_{\varepsilon}, d_{\varepsilon})$, $f^{12}_{mn}=\overline{f^{11}_{mn}}$, $g^{12}_{mnk}=\overline{g^{11}_{mnk}}$ and
\begin{equation*}
  \begin{array}{ll}
    f^{11}_{11}&=2\psi_1(0)\Big[\cfrac{\partial}{\partial\tau}A(\mu_0)\phi_1(0)+\cfrac{\partial}{\partial\tau}B(\mu_0)\phi_1(-1)\Big],\\
    f^{11}_{21}&=2\psi_1(0)\Big[\cfrac{\partial}{\partial d}A(\mu_0)\phi_1(0)+\cfrac{\partial}{\partial d}B(\mu_0)\phi_1(-1)\Big],\\
    f^{13}_{12}&=2\psi_2(0)\Big[-\cfrac{n_2^2}{l^2}\cfrac{\partial}{\partial \tau}\widetilde{D}(\mu_0)\phi_2(0)+
                  \cfrac{\partial}{\partial \tau}A(\mu_0)\phi_2(0)+\cfrac{\partial}{\partial \tau}B(\mu_0)\phi_2(-1)\Big],\\
    f^{13}_{22}&=2\psi_2(0)\Big[-\cfrac{n_2^2}{l^2}\cfrac{\partial}{\partial d}\widetilde{D}(\mu_0)\phi_2(0)+
                  \cfrac{\partial}{\partial d}A(\mu_0)\phi_2(0)+\cfrac{\partial}{\partial d}B(\mu_0)\phi_2(-1)\Big],\\
\end{array}
\end{equation*}
\begin{equation*}
\begin{array}{ll}
    g^{11}_{210}&=f^{11}_{210}+\cfrac{3}{2i\omega_0\tau_0}\Big(-f^{11}_{110}f^{11}_{200}+f^{11}_{110}f^{12}_{110}+\cfrac{2}{3}f^{11}_{020}f^{12}_{200}\Big)\\
                 &~~+\cfrac{3}{2}\psi_1(0)\Big[S_{yz_1}(<h_{110}(\theta)b_{n_1},b_{n_1}>)+S_{y\bar{z}_1}(<h_{200}(\theta)b_{n_1},b_{n_1}>)\Big],\\
    g^{11}_{102}&=f^{11}_{102}+\cfrac{3}{2i\omega_0\tau_0}\Big(-2f^{11}_{002}f^{11}_{200}+f^{12}_{002}f^{11}_{110}+2f^{11}_{002}f^{13}_{101}\Big)\\
                 &~~+\cfrac{3}{2}\psi_1(0)\Big[S_{yz_1}(<h_{002}(\theta)b_{n_1},b_{n_1}>)+S_{yz_2}(<h_{101}(\theta)b_{n_2},b_{n_1}>)\Big],\\
    g^{13}_{111}&=f^{13}_{111}+\cfrac{3}{2i\omega_0\tau_0}\Big(-2f^{13}_{101}f^{11}_{110}+f^{13}_{011}f^{12}_{110}\Big)\\
                 &~~+\cfrac{3}{2}\psi_2(0)\Big[S_{yz_1}(<h_{011}(\theta)b_{n_1},b_{n_2}>)+S_{y\bar{z}_1}(<h_{101}(\theta)b_{n_1},b_{n_2}>)
                 +S_{yz_2}(<h_{110}(\theta)b_{n_2},b_{n_2}>)\Big],\\
    g^{13}_{003}&=f^{13}_{003}+\cfrac{3}{2i\omega_0\tau_0}\Big(-f^{11}_{002}f^{13}_{101}+f^{12}_{002}f^{13}_{011}\Big)
                    +\cfrac{3}{2}\psi_2(0)\Big[S_{yz_2}(<h_{002}(\theta)b_{n_2},b_{n_2}>)\Big],
  \end{array}
\end{equation*}
where $\widetilde{D}=\tau D$, $A(\mu)=\tau L_1$, $B(\mu)=\tau L_2$, $f^{12}_{mnk}=\overline{f^{11}_{mnk}}$, and
\begin{equation*}
  \begin{array}{ll}
    f^{11}_{mnk}=\cfrac{1}{\sqrt{l\pi}}\psi_1(0)F_{mnk},~~~ & f^{13}_{mnk}=\cfrac{1}{\sqrt{l\pi}}\psi_2(0)F_{mnk},~~\text{when}~~m+n+k=2,\\    f^{11}_{mnk}=\cfrac{1}{l\pi}\psi_1(0)F_{mnk},~~~  & f^{13}_{mnk}=\cfrac{1}{l\pi}\psi_2(0)F_{mnk},~~\text{when}~~m+n+k=3.\\
  \end{array}
\end{equation*}

\begin{equation*}
  \begin{array}{ll}
    <h_{200}(\theta)b_{n_1},b_{n_1}>=\cfrac{e^{2i\omega_0\tau_0\theta}}{l\pi}\Big[2i\omega_0\tau_0-\tau_0 L_0(e^{2i\omega_0\tau_0\cdot}Id)\Big]^{-1}F_{200}
            -\cfrac{1}{i\omega_0\tau_0\sqrt{l\pi}}\Big[f^{11}_{200}\phi_1(\theta)+\cfrac{1}{3}f^{12}_{200}\overline{\phi}_1(\theta)\Big],\\
    <h_{110}(\theta)b_{n_1},b_{n_1}>=-\cfrac{1}{\sqrt{l\pi}}\Big[\tau_0L_0(Id)\Big]^{-1}F_{110}+\cfrac{1}{i\omega_0\tau_0\sqrt{l\pi}}
            \Big[f^{11}_{110}\phi_1(\theta)-f^{12}_{110}\overline{\phi}_1(\theta)\Big],\\
    <h_{110}(\theta)b_{n_2},b_{n_2}>= <h_{110}(\theta)b_{n_1},b_{n_1}>,\\
    <h_{101}(\theta)b_{n_2},b_{n_1}>=\cfrac{e^{i\omega_0\tau_0\theta}}{l\pi}\Big[i\omega_0\tau_0+\cfrac{n_2^2}{l^2}\widetilde{D}(\mu_0)
           -\tau_0L_0(e^{i\omega_0\tau_0 \cdot}Id)\Big]^{-1}F_{101}-\cfrac{1}{i\omega_0\tau_0\sqrt{l\pi}}f^{13}_{101}\phi_2(0),\\
  \end{array}
\end{equation*}
\begin{equation*}
  \begin{array}{ll}
   <h_{011}(\theta)b_{n_1},b_{n_2}>=\cfrac{e^{-i\omega_0\tau_0\theta}}{l\pi}\Big[-i\omega_0\tau_0+\cfrac{n_2^2}{l^2}\widetilde{D}(\mu_0)
          -\tau_0L_0(e^{-i\omega_0\tau_0\cdot}Id)\Big]^{-1}F_{011}+\cfrac{1}{i\omega_0\tau_0\sqrt{l\pi}}f^{13}_{011}\phi_2(0),\\
   <h_{002}(\theta)b_{n_1},b_{n_1}>=-\cfrac{1}{l\pi}\Big[\tau_0L_0(Id)\Big]^{-1}F_{002}+\cfrac{1}{i\omega_0\tau_0\sqrt{l\pi}}
            \Big[f^{11}_{002}\phi_1(\theta)-f^{12}_{002}\overline{\phi}_1(\theta)\Big],\\
   <h_{002}(\theta)b_{n_2},b_{n_2}>=\cfrac{1}{2l\pi}\Big[\cfrac{(2n_2)^2}{l^2}\widetilde{D}(\mu_0)-\tau_0L_0(Id)\Big]^{-1}F_{002}+ <h_{002}(\theta)b_{n_1},b_{n_1}>,
  \end{array}
\end{equation*}
and $S_{yz_i}(i=1,2)$, $S_{y\bar{z}_1}$ are linear operators from $Q_1$ to $X_{\mathbb{C}}$ given by
\begin{equation*}
  \begin{array}{ll}
    S_{yz_i}(\varphi)=(F_{y_1(0)z_i},~ F_{y_2(0)z_i})\varphi(0)+(F_{y_1(-1)z_i}, F_{y_2(-1)z_i})\varphi(-1),\\
    S_{y\bar{z}_1}(\varphi)=(\overline{F_{y_1(0)z_1}}, \overline{F_{y_2(0)z_1}})\varphi(0)+(\overline{F_{y_1(-1)z_1}}, \overline{F_{y_2(-1)z_1}})\varphi(-1).\\
  \end{array}
\end{equation*}
For specific expressions of formulas $F_{y_i(\cdot)z_{j}}, F_{mnk}$, please refer to Appendix.

With the cylindrical coordinate transformation:
$$
z_1=\widetilde{\rho} e^{i \sigma}, ~\bar{z}_1=\widetilde{\rho} e^{-i\sigma},~ z_2=\widetilde{\eta}
$$
and variable substitution:
$$
\rho=\sqrt{\cfrac{|\text{Re}(g^{11}_{210})|}{6}}\widetilde{\rho}, ~~\eta=\sqrt{\cfrac{|g^{13}_{003}|}{6}}\widetilde{\eta}, ~~\varepsilon=\text{Sign}\big(\text{Re}(g^{11}_{210})\big), ~~\widetilde{t}=t/\varepsilon,
$$
the amplitude equation \eqref{z_norm} can be rewritten as
\begin{equation}\label{rho-eta}
\begin{array}{ll}
  \cfrac{\text{d}\rho}{\text{d}\widetilde{t}}=\rho\Big(\epsilon_1(\mu_{\varepsilon})+\rho^2+b\eta^2\Big),\\
  \cfrac{\text{d}\eta}{\text{d}\widetilde{t}}=\eta\left(\epsilon_2(\mu_{\varepsilon})+c \rho^2+\hat{d}\eta^2\right),\\
\end{array}
\end{equation}
where
\begin{equation*}
  \begin{array}{ll}
    \epsilon_1(\mu_{\varepsilon})=\cfrac{\varepsilon}{2}\left[\text{Re}(f^{11}_{11})\tau_{\varepsilon}+\text{Re}
       (f^{11}_{21})d_{\varepsilon}\right],\\
    \epsilon_2(\mu_{\varepsilon})=\cfrac{\varepsilon}{2}\left[f^{13}_{12}\tau_{\varepsilon}+
       f^{13}_{22}d_{\varepsilon}\right],\\
    b=\cfrac{\varepsilon\text{Re}(g^{11}_{102})}{|g^{13}_{003}|},~~c=\cfrac{\varepsilon g^{13}_{111}}{|\text{Re}(g^{11}_{210})|},
       ~~\hat{d}=\cfrac{\varepsilon g^{13}_{003}}{|g^{13}_{003}|}=\pm1.
  \end{array}
\end{equation*}
Notice that $\rho\geq 0$, and $\eta$ is arbitrarily real number. Hence, system \eqref{rho-eta} always has a zero equilibrium $E_1(0, 0)$ for all
$\epsilon_1, \epsilon_2$, and three boundary equilibria
\begin{equation*}
\begin{array}{ll}
  E_2\left(\sqrt{-\epsilon_1}, 0\right), ~~~&\text{for} ~~~\epsilon_1<0\\
  E_3^{\pm}\Big(0,\pm \sqrt{-\cfrac{\epsilon_2}{\hat{d}}}\Big), ~~~&\text{for}~~~\epsilon_2\hat{d}<0,\\
  \end{array}
\end{equation*}
and two possible positive equilibria
\begin{equation*}
  \begin{array}{ll}
  E_4^{\pm}=\Big(\sqrt{\cfrac{b\epsilon_2-\hat{d}\epsilon_1}{\hat{d}-bc}}, \pm \sqrt{\cfrac{c\epsilon_1-\epsilon_2}{\hat{d}-bc}}\Big),
     ~~~\text{for} ~~~\sqrt{\cfrac{b\epsilon_2-\hat{d}\epsilon_1}{\hat{d}-bc}}>0,  \sqrt{\cfrac{c\epsilon_1-\epsilon_2}{\hat{d}-bc}}>0.
  \end{array}
\end{equation*}
There are 12 distinct types of unfoldings \cite{GuH} according to the signs of coefficients $b, c, \hat{d}$ and $\hat{d}-bc$.

\section{Numerical Simulations}\label{dynamic classifiction}
In this section, we choose a set of parameters. Under these parameters, the dynamic classification of the system \eqref{eq_ma_tau}
 near the Turing-Hopf bifurcation point is given and some simulations are carried out.
\subsection{Dynamic classification}
In this subsection, we apply the normal form method and the theoretical results obtained in previous sections to the system \eqref{eq_ma_tau}. The bifurcation diagram of system \eqref{rho-eta} with certain parameters near the Turing-Hopf bifurcation point in the $\tau_{\varepsilon}-d_{\varepsilon}$ parameter plane is firstly shown to determine the existential area of solutions, the critical lines separate the plane into six regions, and for each region, we shall given a detail analysis.

Take
$$
{\bf (A)}~~~~~~~~~~~~r=1.10,~~~\gamma=4,~~~\alpha=0.654,~~~l=6.~~~~~~~~~~~~~~~~~~~~~~~~~~~~~~~~~~~~
$$
Then $m^*=0.233073$, $a^*=0.737257$. From \eqref{tau} and \eqref{d_0}, we have $\tau_0=7.084102$ with $n_1=0$, $d_0=0.0531255$ with $n_2=6$, and by a simple calculation, we have
\begin{equation*}
  \begin{array}{ll}
  \epsilon_1 = -1.20727\times 10^{-2}\tau_{\varepsilon};~~
                    \epsilon_2 = 1.629874\times 10^{-8}\tau_{\varepsilon} + 6.085844 d_{\varepsilon};\\
  \varepsilon = -1; ~~~\hat{d} = 1;~~~b = 1.582903; ~~~c = 0.993790;~~~\hat{d}-bc = -0.573073.
  \end{array}
\end{equation*}
Then \eqref{rho-eta} becomes
\begin{equation}\label{28}
  \begin{array}{ll}
  \dot{\rho}=\rho\left(-1.20727\times 10^{-2}\tau_{\varepsilon}+\rho^2+1.582903\eta^2\right),\\
  \dot{\eta}=\eta\left(1.629874\times 10^{-8}\tau_{\varepsilon} + 6.085844 d_{\varepsilon}+0.993790\rho^2+\eta^2\right).\\
  \end{array}
\end{equation}
According to the classification for the planar vector field \eqref{rho-eta} in [Page 399, \cite{GuH}], Case Ia occurs under this set of parameters. The detailed bifurcation diagram and corresponding phase portraits are shown in Fig.\ref{fig-TH}, in which the two blue lines are two pitchfork bifurcation curves:
\begin{equation*}
\begin{array}{ll}
  T_1:~~d_{\varepsilon}=-1.253237 \times 10^{-3}\tau_{\varepsilon};\\
  T_2:~~d_{\varepsilon}=-1.971431 \times 10^{-3}\tau_{\varepsilon},
\end{array}
\end{equation*}
and the other two solid lines $L_1$ and $L_2$ are
\begin{equation*}
  L_1:~~\tau_{\varepsilon}=0;~~~~~~~L_2:~~~d_{\varepsilon}=-2.678139\times 10^{-9}\tau_{\varepsilon}.
\end{equation*}
Notice that, under the parameters {\bf (A)}, the dynamics of original system \eqref{eq_ma_tau} near the $(\tau, d)=(\tau_0, d_0)$ is topologically equivalent to that of normal form system \eqref{28} at $(\tau_{\varepsilon}, d_{\varepsilon})=(0, 0)$. For system \eqref{28}, the equilibrium in the $\rho-$axis $(E_2)$ identifies the characteristics of the solutions of \eqref{eq_ma_tau} in time, while equilibrium in the $\eta-$axis $(E_3^{\pm})$ identifies the characteristics in space. Moreover,
the positive equilibrium in the $\rho-\eta$ plane $(E_4^{\pm})$ identifies the characteristics of solutions of system \eqref{eq_ma_tau} both in time and space.

From Fig.\ref{fig-TH}, we see that the solid lines $L_1, L_2, T_1$ and $ T_2$ divide the plane into six regions, and in different regions there are different dynamics which can be summarized as follows.

When $(\tau_{\varepsilon}, d_{\varepsilon})\in D_1$, the amplitude system \eqref{28} has a stable trivial equilibrium $E_1(0,0)$, which means the constant steady state $E_*(m^*, a^*)$ of original system \eqref{eq_ma_tau} is locally asymptotically stable;

When $(\tau_{\varepsilon}, d_{\varepsilon})$ passes through $L_1$ into $D_2$, the constant steady state $E_*(m^*, a^*)$  lost its stability with a new stable spatially homogeneous periodic solution bifurcating from $E_*(m^*, a^*)$.

When $(\tau_{\varepsilon}, d_{\varepsilon})$ enters $D_3$ from $D_2$, two unstable non-constant steady states newly appear since a Turing bifurcation occurs at $L_2$. Moreover, $E_1$ of system \eqref{28} becomes an unstable node from a saddle.

When $(\tau_{\varepsilon}, d_{\varepsilon}) \in D_4$, two unstable spatially inhomogeneous periodic solutions newly appear and do coexist. The non-constant steady states become stable compared with its stability in region $D_3$.

When $(\tau_{\varepsilon}, d_{\varepsilon})$ enters $D_5$ from $D_4$, the two unstable spatially inhomogeneous solutions disappear since the parameters pass through another Turing bifurcation curve $T_2$, and the spatially homogeneous periodic solution loses its stability.

When $(\tau_{\varepsilon}, d_{\varepsilon})$ finally enters region $D_6$, the spatially homogeneous periodic solution disappears with a Hopf bifurcation occuring at $L_1$. Moreover, $E_1$ of system \eqref{28} becomes a saddle from an unstable node, and it will regain its stability when $(\tau_{\varepsilon}, d_{\varepsilon})$ passes through $L_2$ into $D_1$.

\begin{figure}[htp]
\centering
  \includegraphics[width=6in]{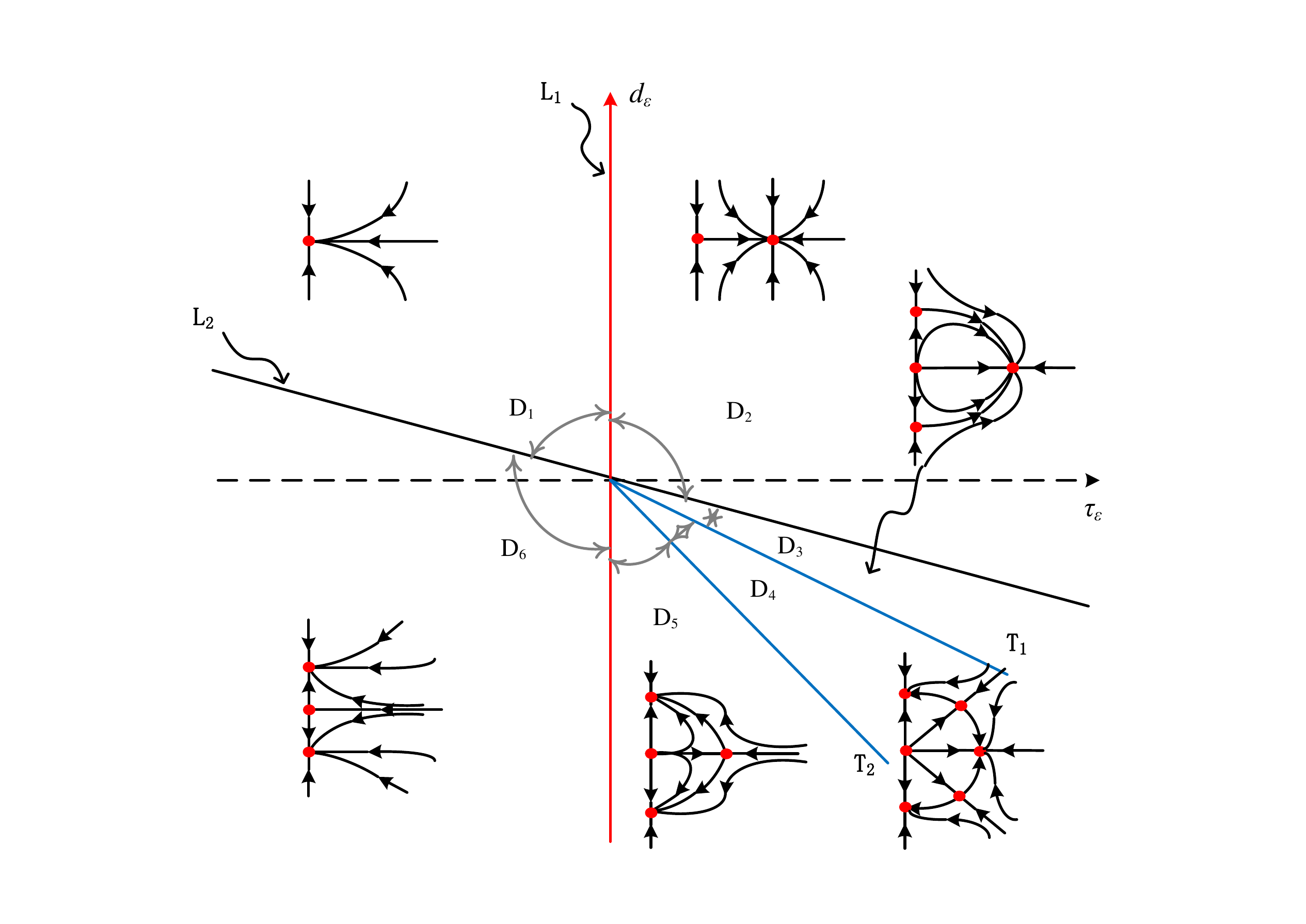}\vspace{0cm}
\caption{Bifurcation diagram of system \eqref{rho-eta} in the $\tau_{\varepsilon}-d_{\varepsilon}$ plane and the corresponding phase portraits.}
\label{fig-TH}       
\end{figure}

\subsection{Simulations}

Numerical simulations of dynamics for original system \eqref{eq_ma_tau} at the Turing-Hopf bifurcation point are carried out in this subsection. For each region in Fig.\ref{fig-TH}, we shall select a set of parameters $(\tau_{\varepsilon}, d_{\varepsilon})$, and for obvious contrast, the parameters are always selected from a rectangle, see Fig.\ref{fig-parameter}. The little pink circles represent the points that that we choose in each region.
\begin{figure}[htp]
\centering
  \includegraphics[width=5in]{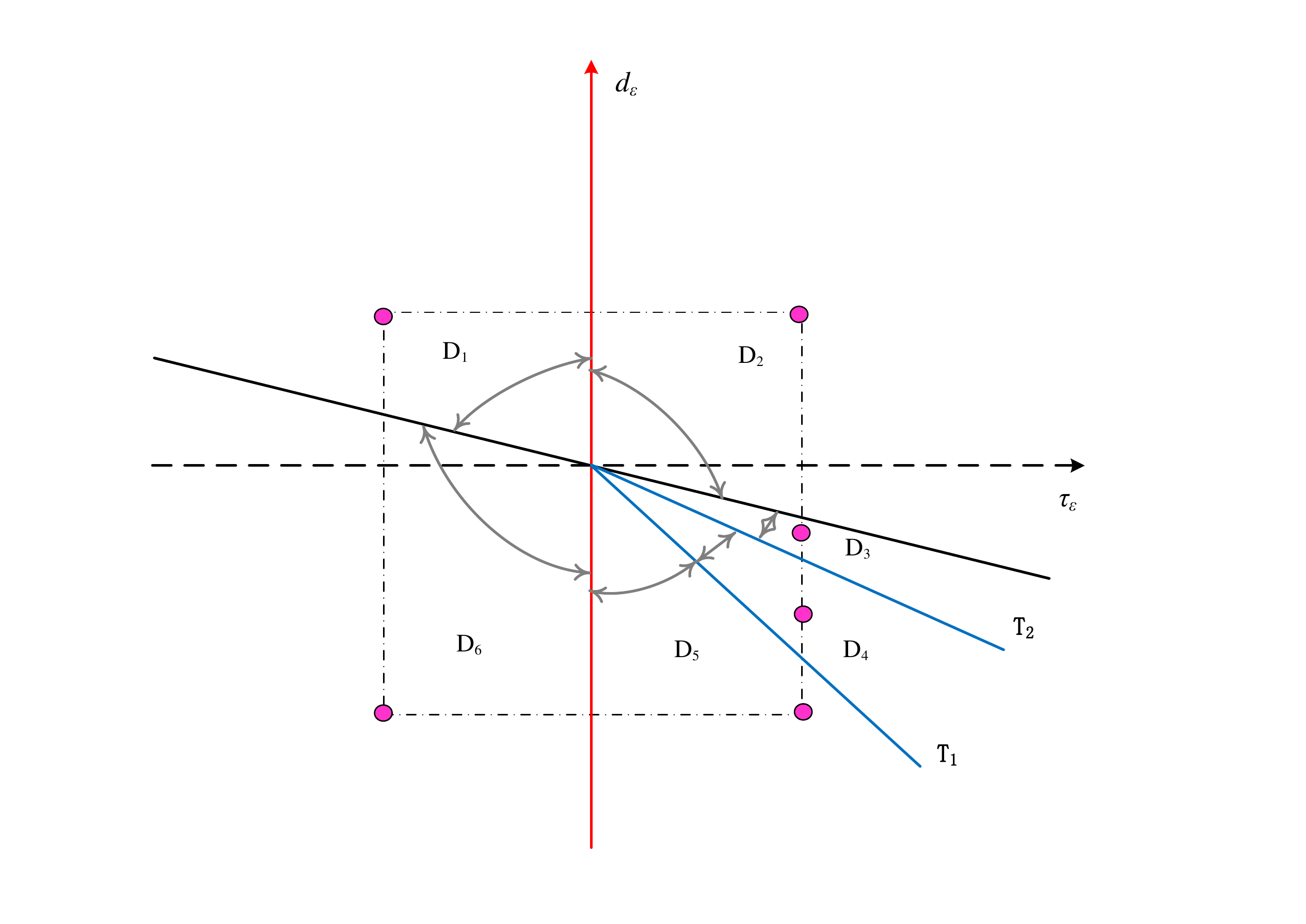}\vspace{0cm}
\caption{The selection of parameters in the $\tau_{\varepsilon}-d_{\varepsilon}$ plane.}
\label{fig-parameter}       
\end{figure}

Fig.\ref{fig-stable-pattern} shows the stable patterns in region $D_1, D_3, D_5$. The stable patterns in region $D_2$ and  $D_6$ are similar with that in $D_3$ and $D_5$, respectively. In region $D_1$, there exists a stable spatially homogeneous steady state; in $D_3$, a stable spatially homogeneous periodic solution exists, and in $D_5$, two stable spatially inhomogeneous steady states coexist. For $D_4$, the non-constant steady state and spatially homogeneous periodic solution both can be considered as the stable patterns, which is related to the initial values. In addition, some transitions that connecting two state can be observed in our numerical simulations, detailed results refer to Fig.\ref{fig-D3}--Fig.\ref{fig-D5}.

%

\begin{figure}[htp]
\begin{multicols}{3}
\begin{center}
\subfigure{\includegraphics[width=2in]{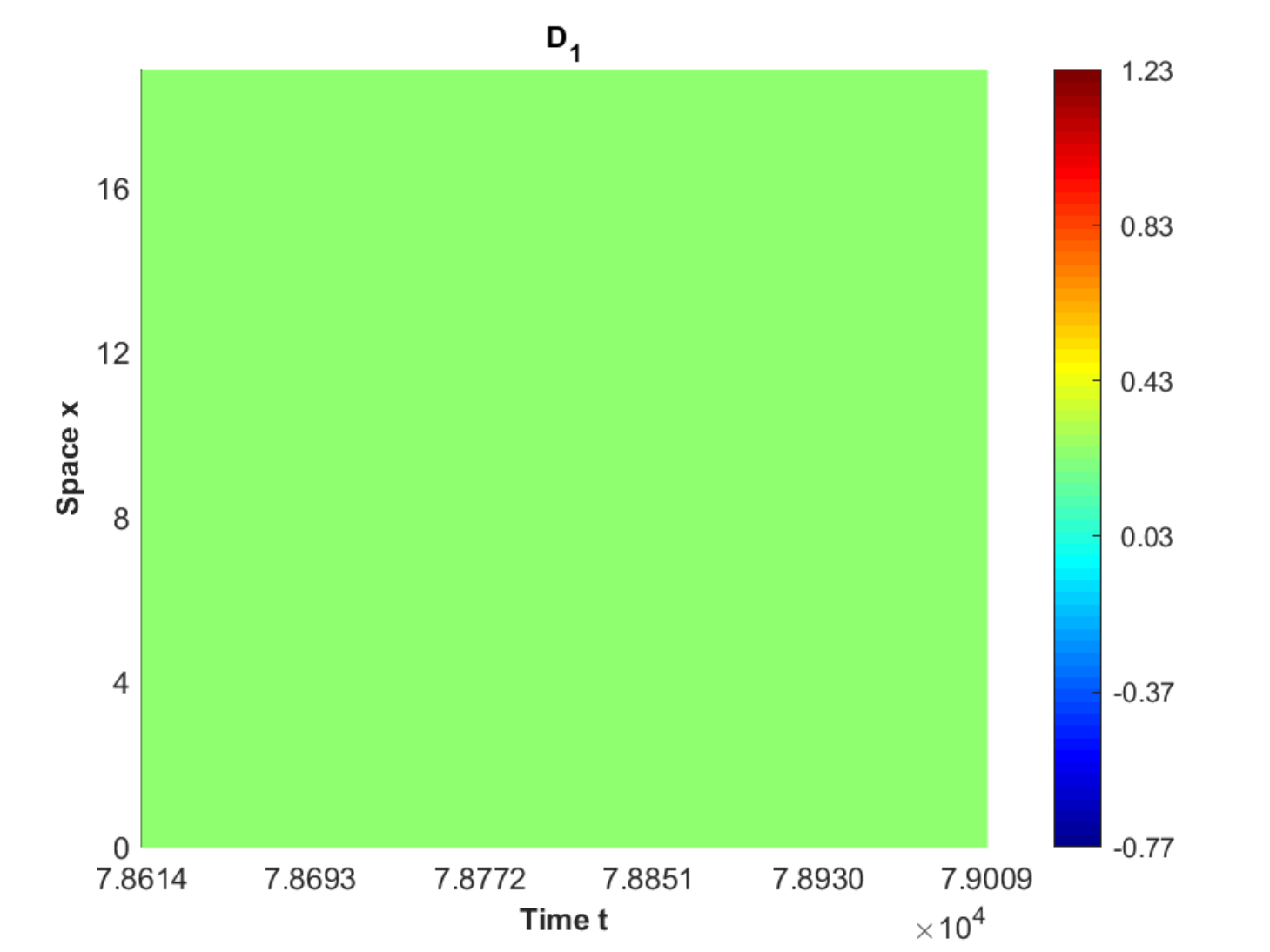}}\\
\subfigure{\includegraphics[width=2in]{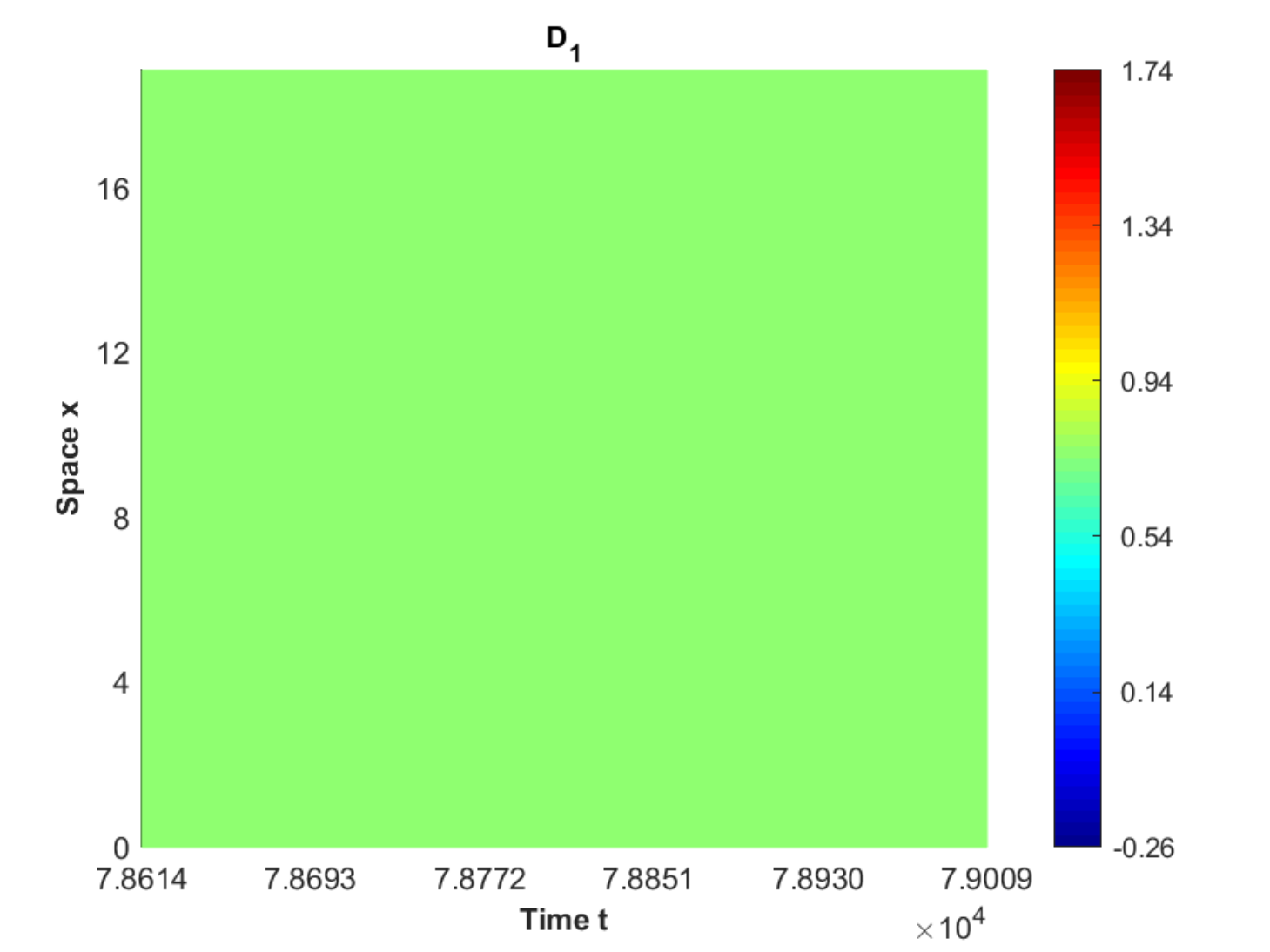}}
\end{center}

\begin{center}
  \subfigure{\includegraphics[width=2in]{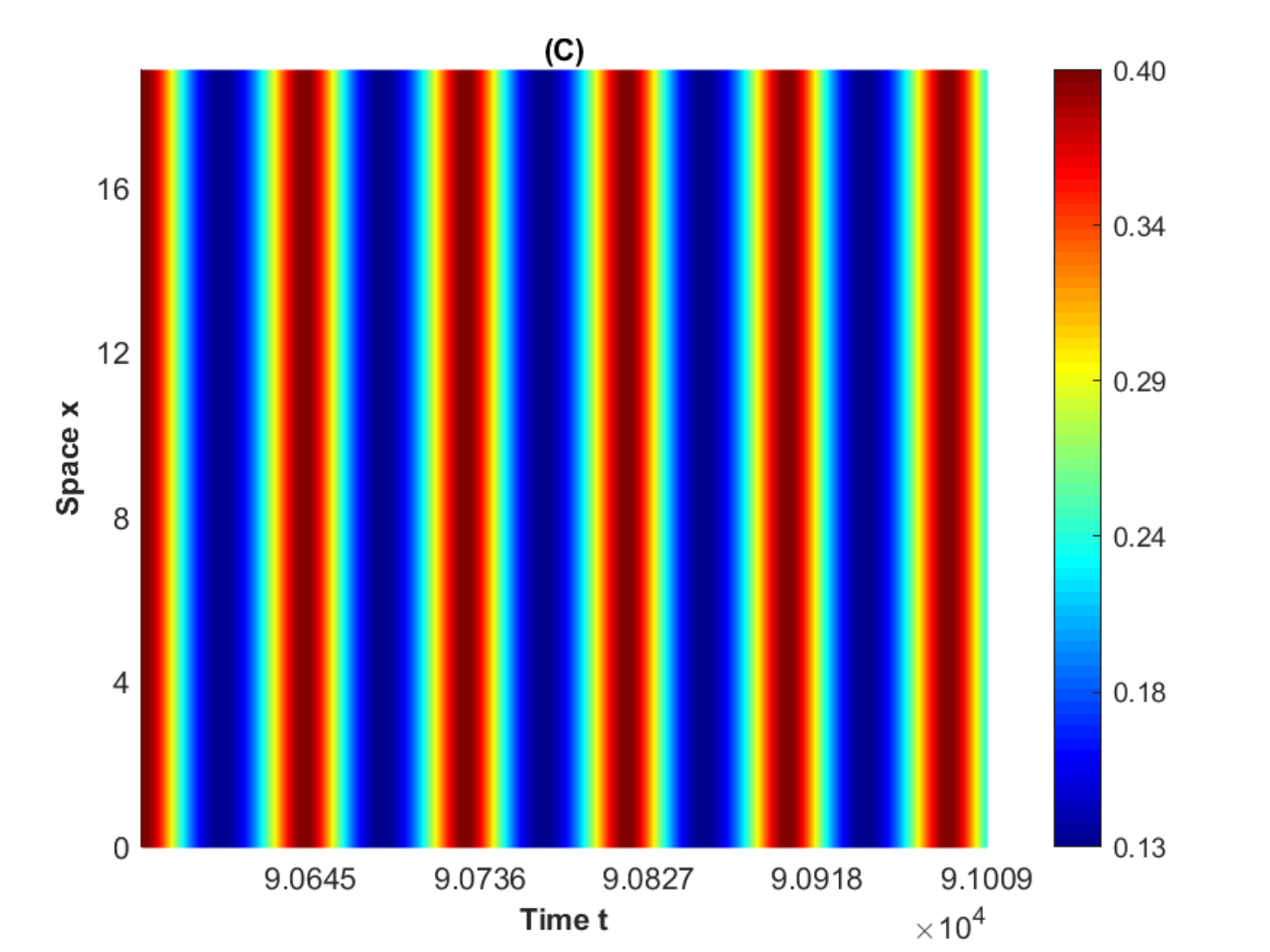}}\\
  \subfigure{\includegraphics[width=2in]{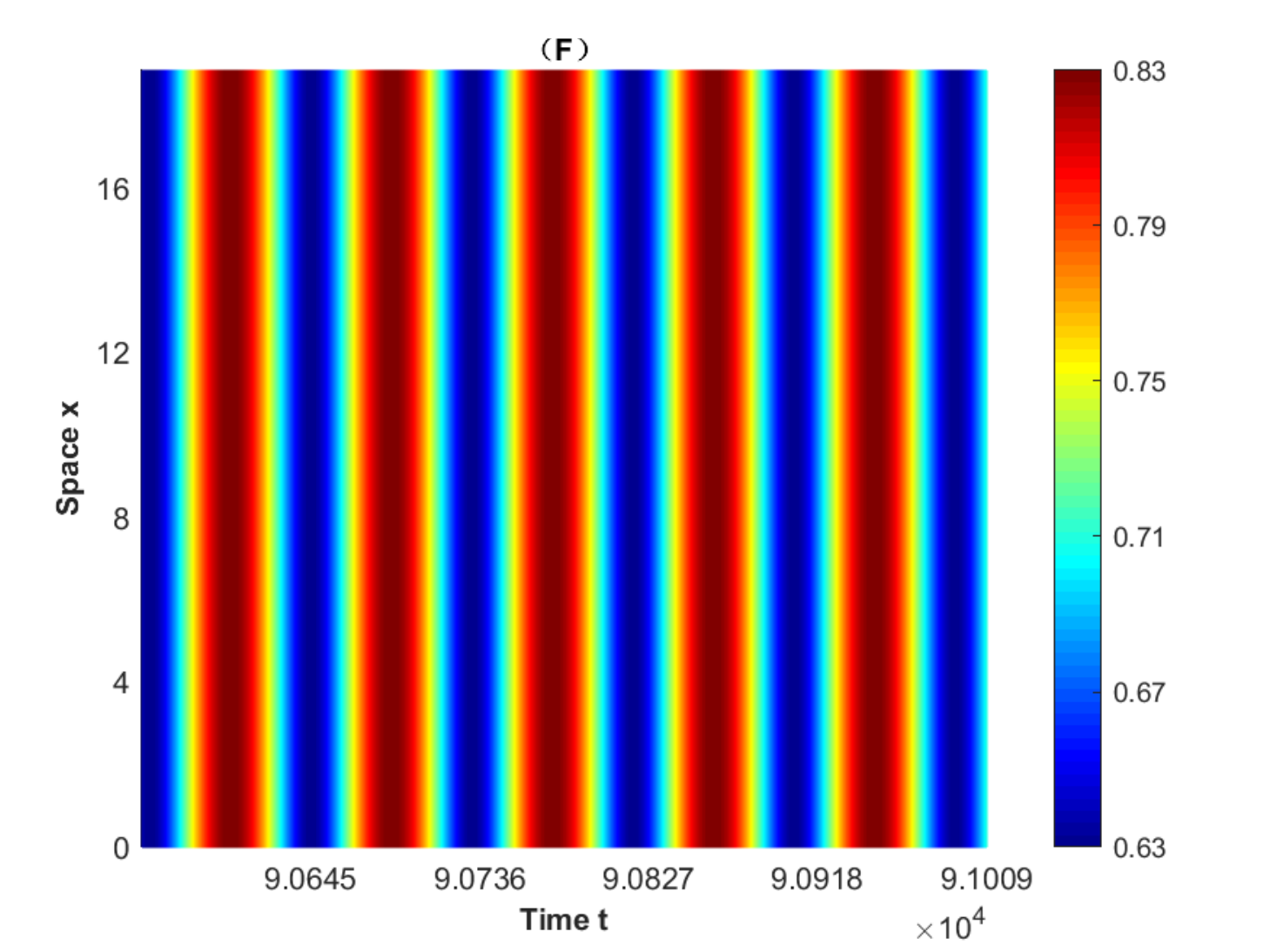}}
\end{center}
\begin{center}
\subfigure{\includegraphics[width=2in]{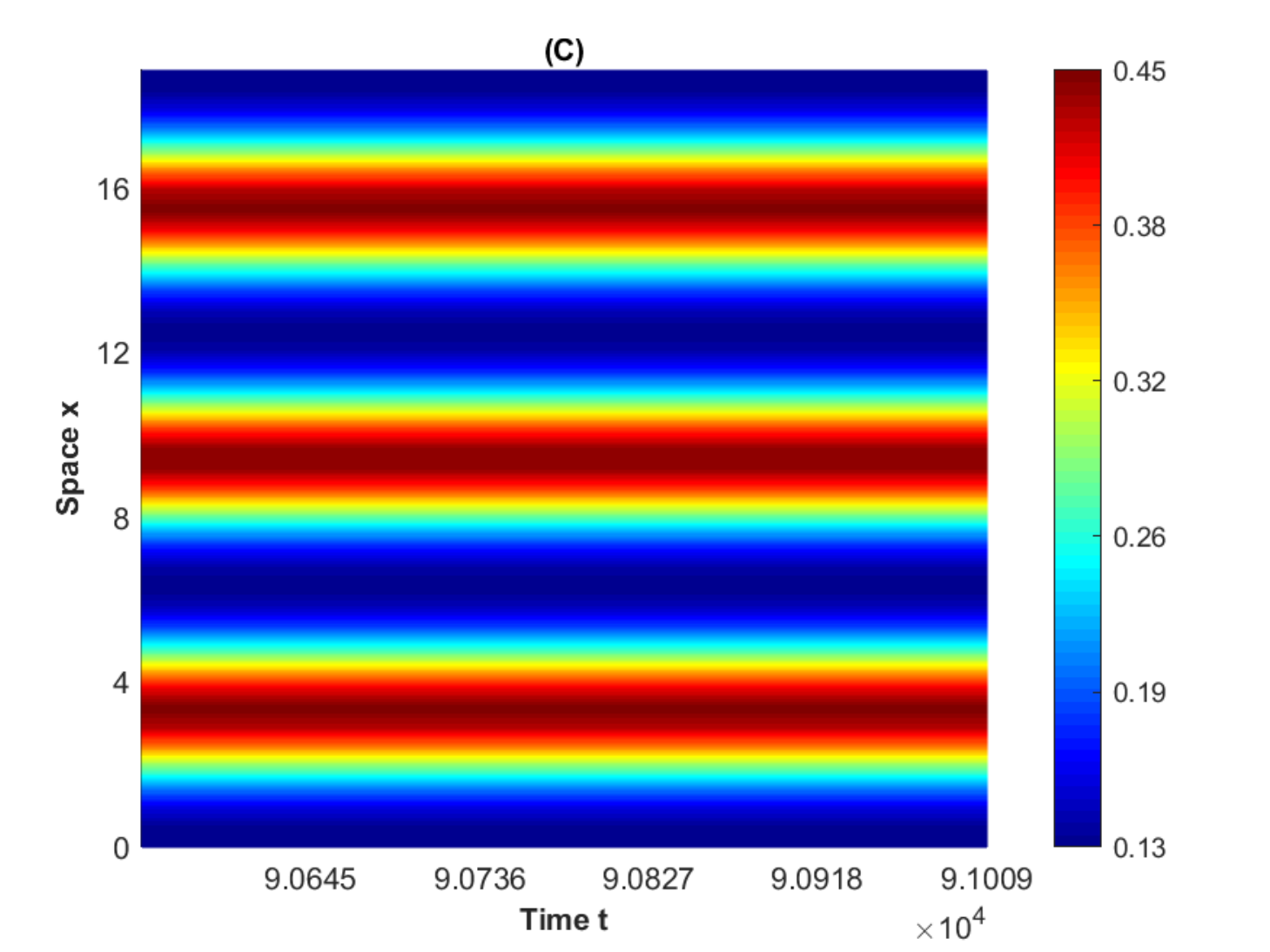}}\\
\subfigure{\includegraphics[width=2in]{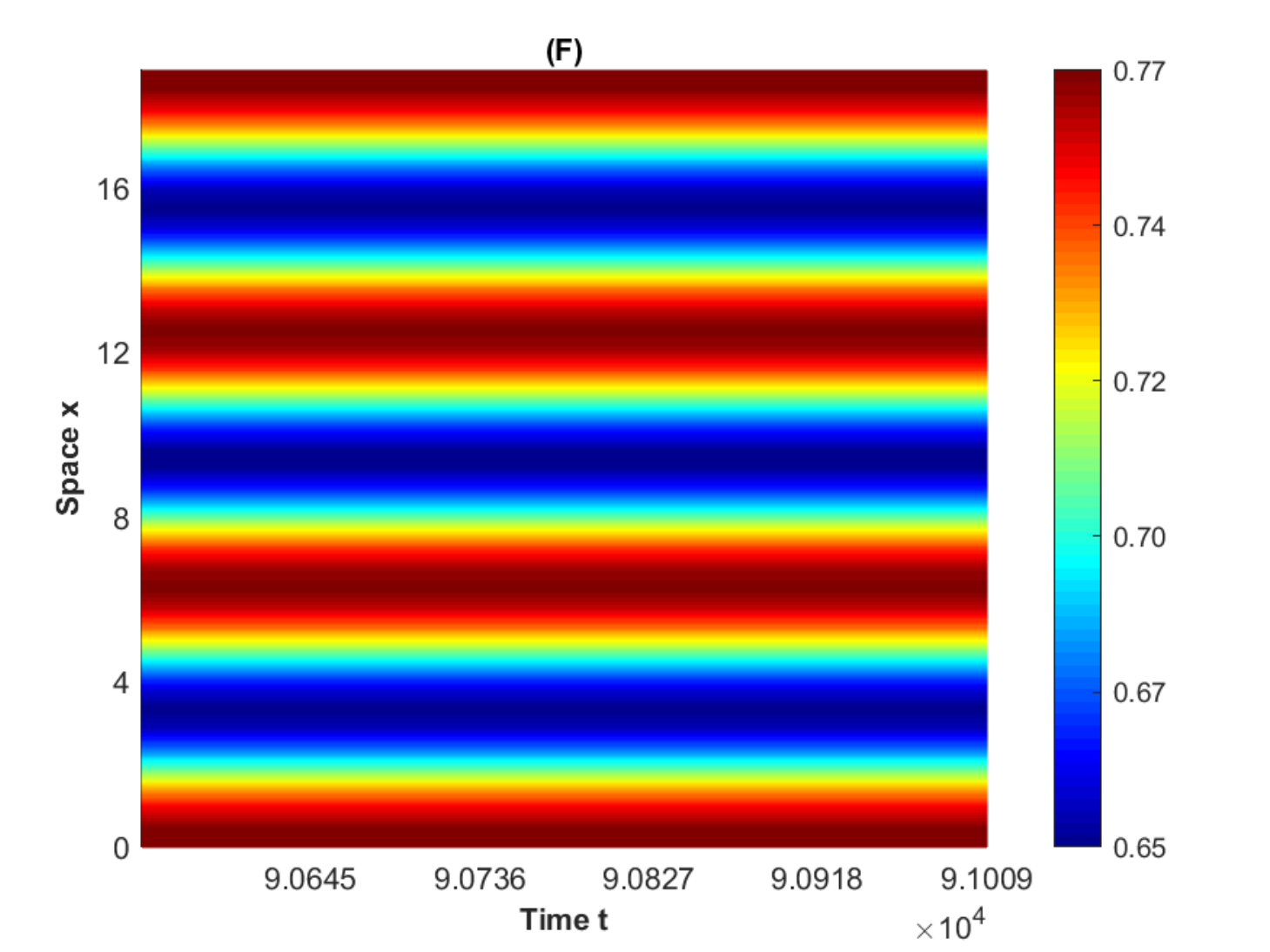}}
\end{center}

\end{multicols}
 \caption{The stable patterns in regions $D_1$, $D_3$, $D_5$. The above graphs shows the dynamics of mussel while the belows, the algae.  }\label{fig-stable-pattern}

\end{figure}

\begin{figure}[htp]
\begin{multicols}{3}
\begin{center}
\subfigure{\includegraphics[width=2in]{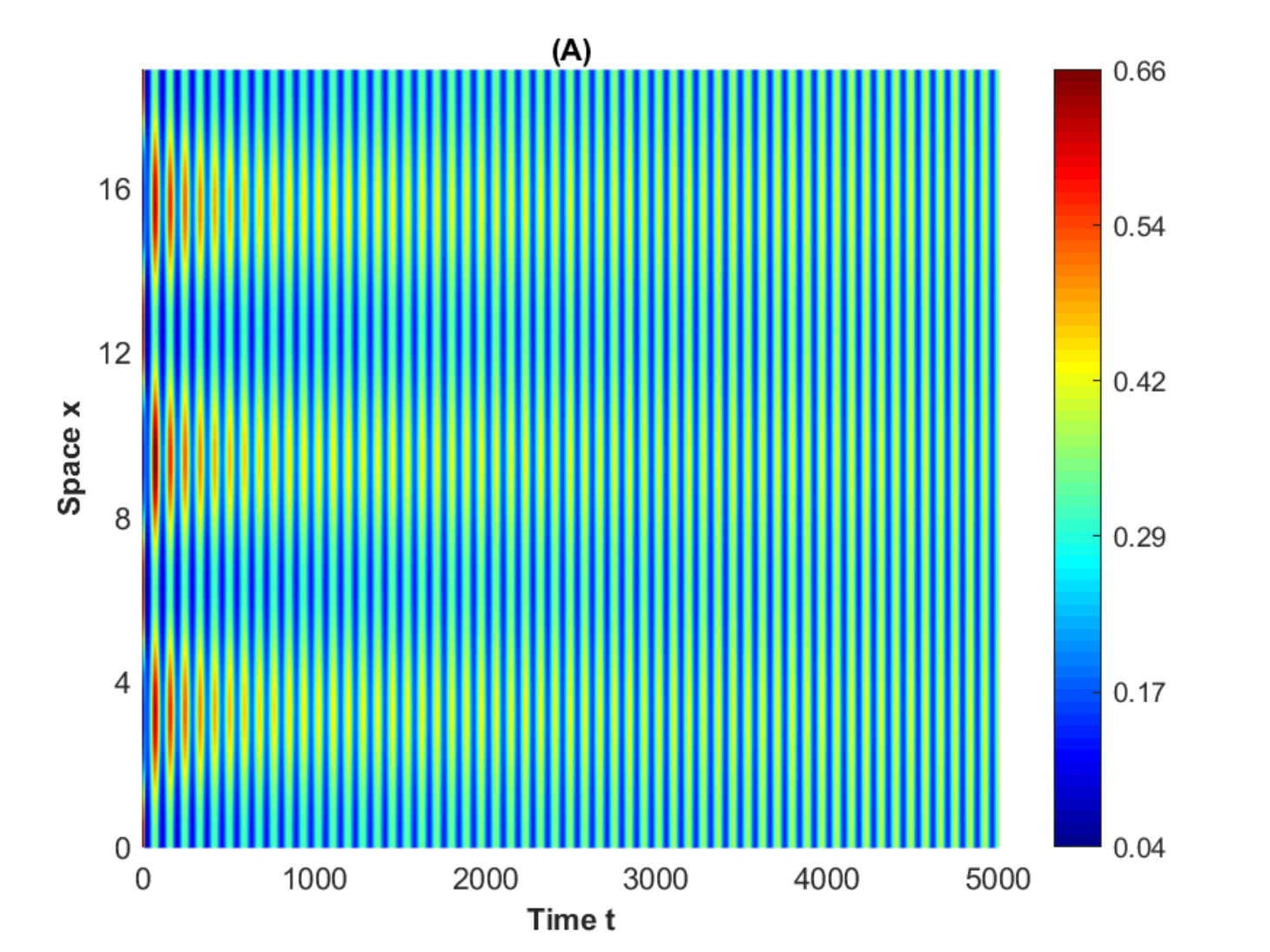}}\\
\subfigure{\includegraphics[width=2in]{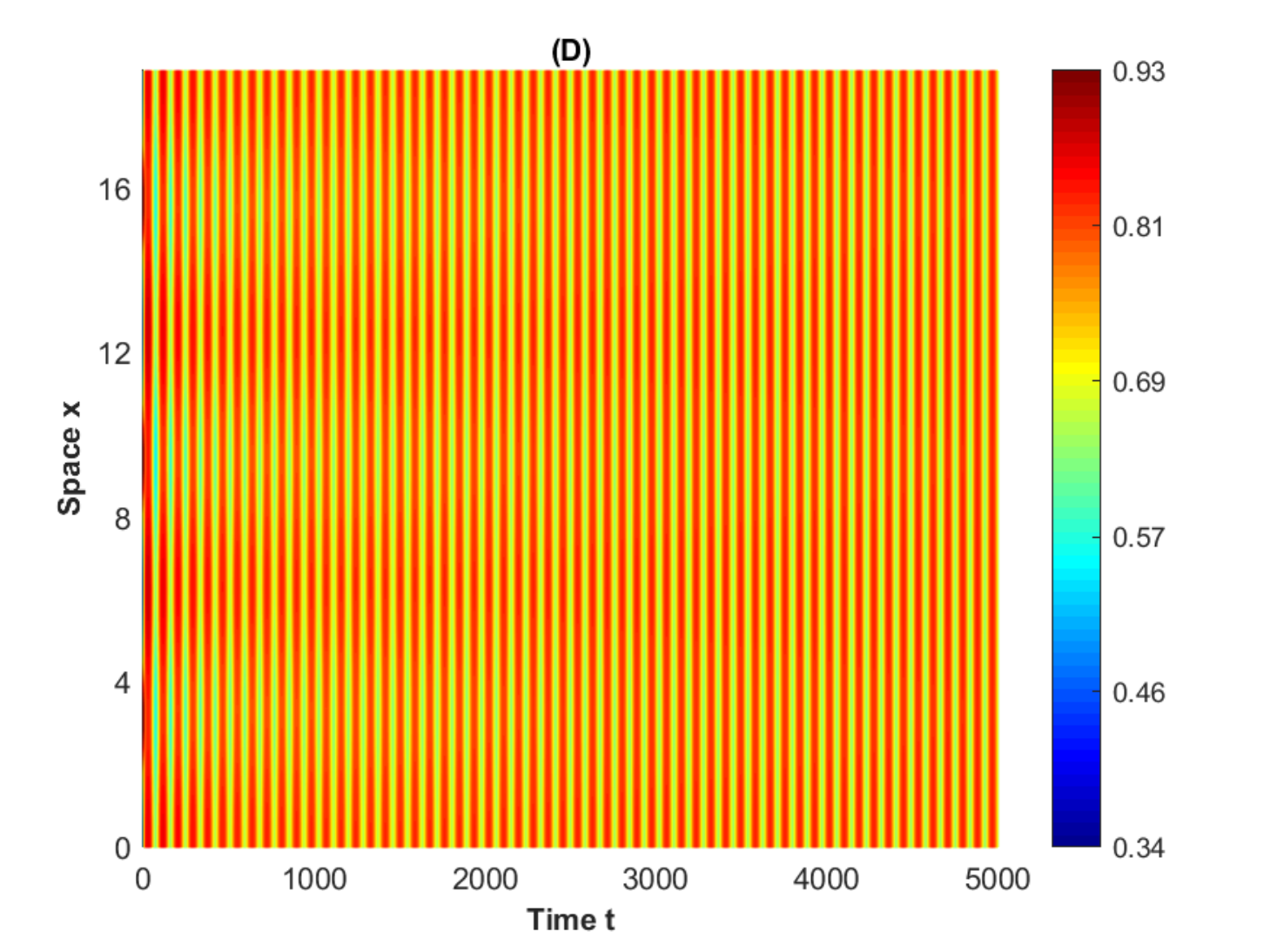}}
\end{center}

\begin{center}
\subfigure{\includegraphics[width=2in]{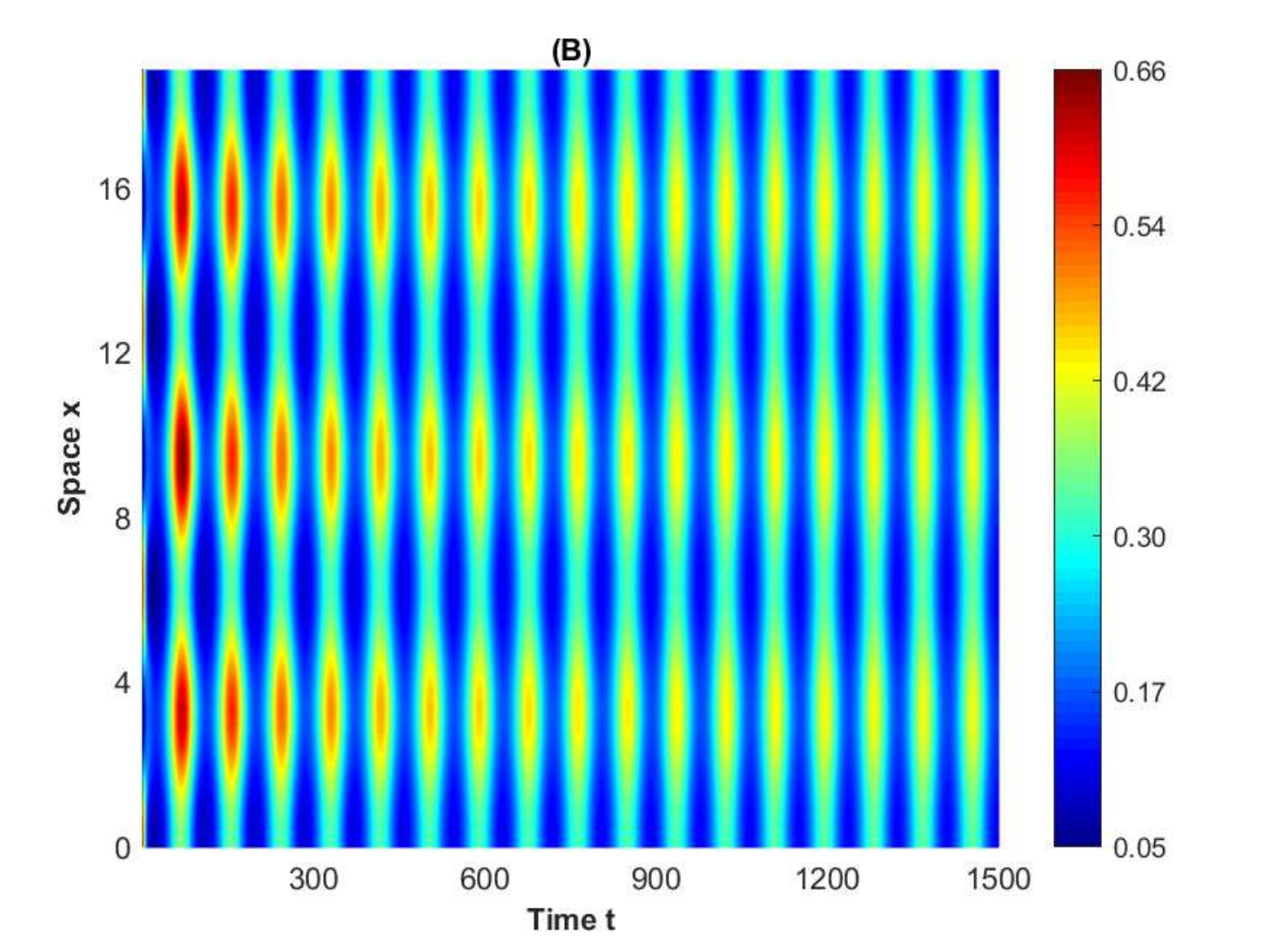}}\\
\subfigure{\includegraphics[width=2in]{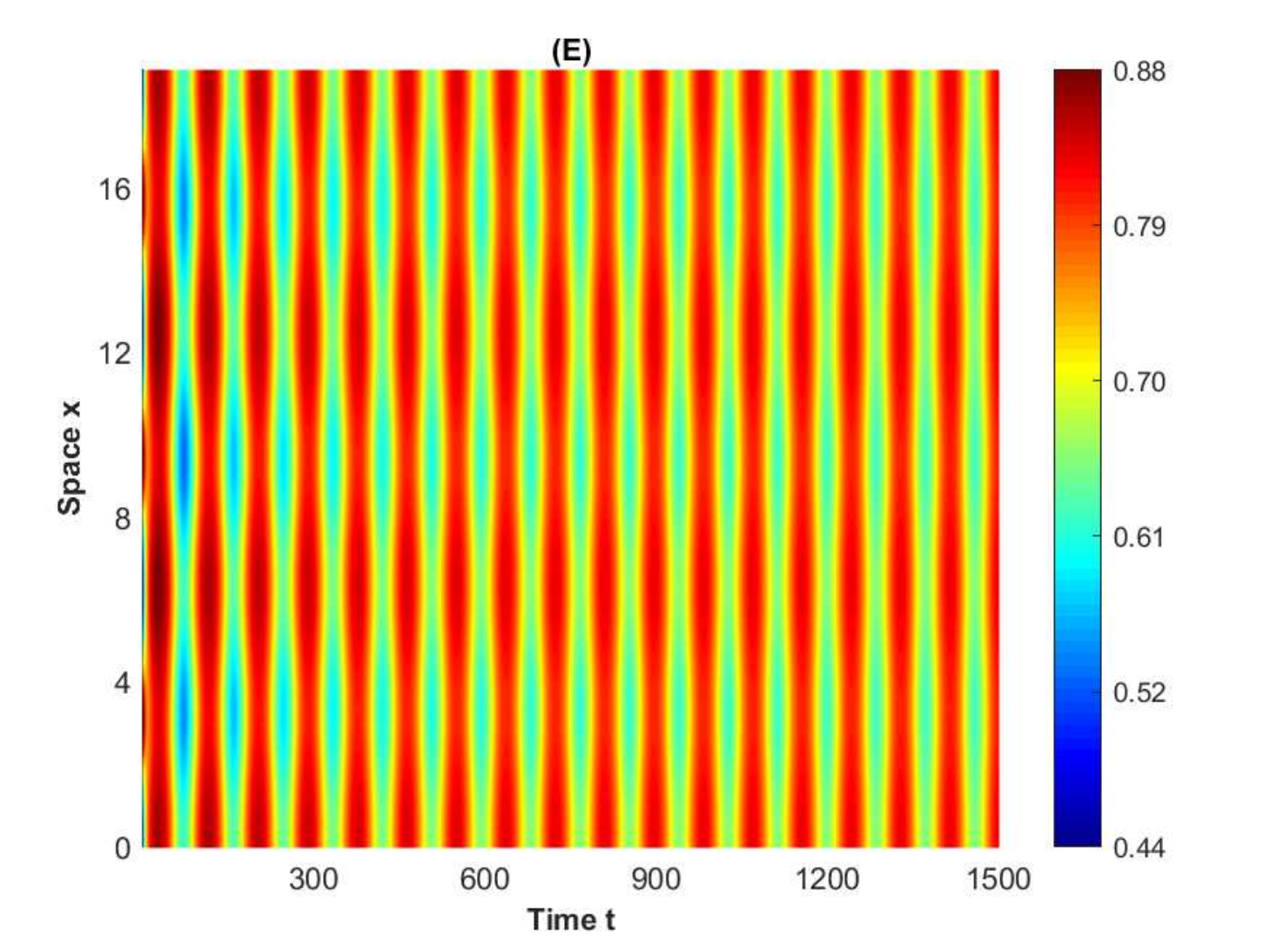}}
\end{center}

\begin{center}
\subfigure{\includegraphics[width=2in]{m_D3_final.pdf}}\\
\subfigure{\includegraphics[width=2in]{a_D3_final.pdf}}
\end{center}
\end{multicols}
 \caption{Stable spatially homogeneous periodic solution in $D_3$ with
 $(\tau_{\varepsilon}, d_{\varepsilon})=(0.5, -0.0005)\in D_3$, and the initial function are $(m^*+0.1+0.3\cos x, a^*-0.1-0.3\cos x)$. (A)-(C): The dynamics of mussel; (D)-(F): The dynamics of algae.}\label{fig-D3}
\end{figure}

Fig.\ref{fig-D3} shows a stable spatially homogeneous periodic solution in $D_3$. Fig.\ref{fig-D5} shows a stable spatially homogeneous steady state in $D_5$. (A) and (D) represent the trends of pattern formation; (B) and (E) show the transformation process at the beginning; (C) and (F)
show the final stable behavior.

Fig.\ref{fig-D4_1} and Fig.\ref{fig-D4_2} shows the different evolutionary process of system \eqref{eq_ma_tau} with the same parameters but slightly different initial functions. Fix $(\tau_{\varepsilon}, d_{\varepsilon})=(0.5, -0.0009)\in D_4$, one case is that we choose the initial function $(m^*+0.3+0.5\cos x, a^*-0.5\cos x)$, then after a period of time evolution, one can see a spatially inhomogeneous periodic solution appears
(see graph (B) and (E) of Fig.\ref{fig-D4_1}), but this is not the final state, the spatially inhomogeneous periodic solution disappears as time going on, and finally reach its stable state, a spatially homogeneous periodic solution. The other case is just the opposite. We choose$(m^*+0.3+0.5\cos x, a^*-0.1-0.5\cos x)$ as the initial functions, and the simulation shows the solution can also evolve into a spatially inhomogeneous periodic solution, but it ultimately becomes a spatially
inhomogeneous steady state when time is long enough.

\begin{figure}[htp]
\begin{multicols}{3}
\begin{center}
\subfigure{\includegraphics[width=2in]{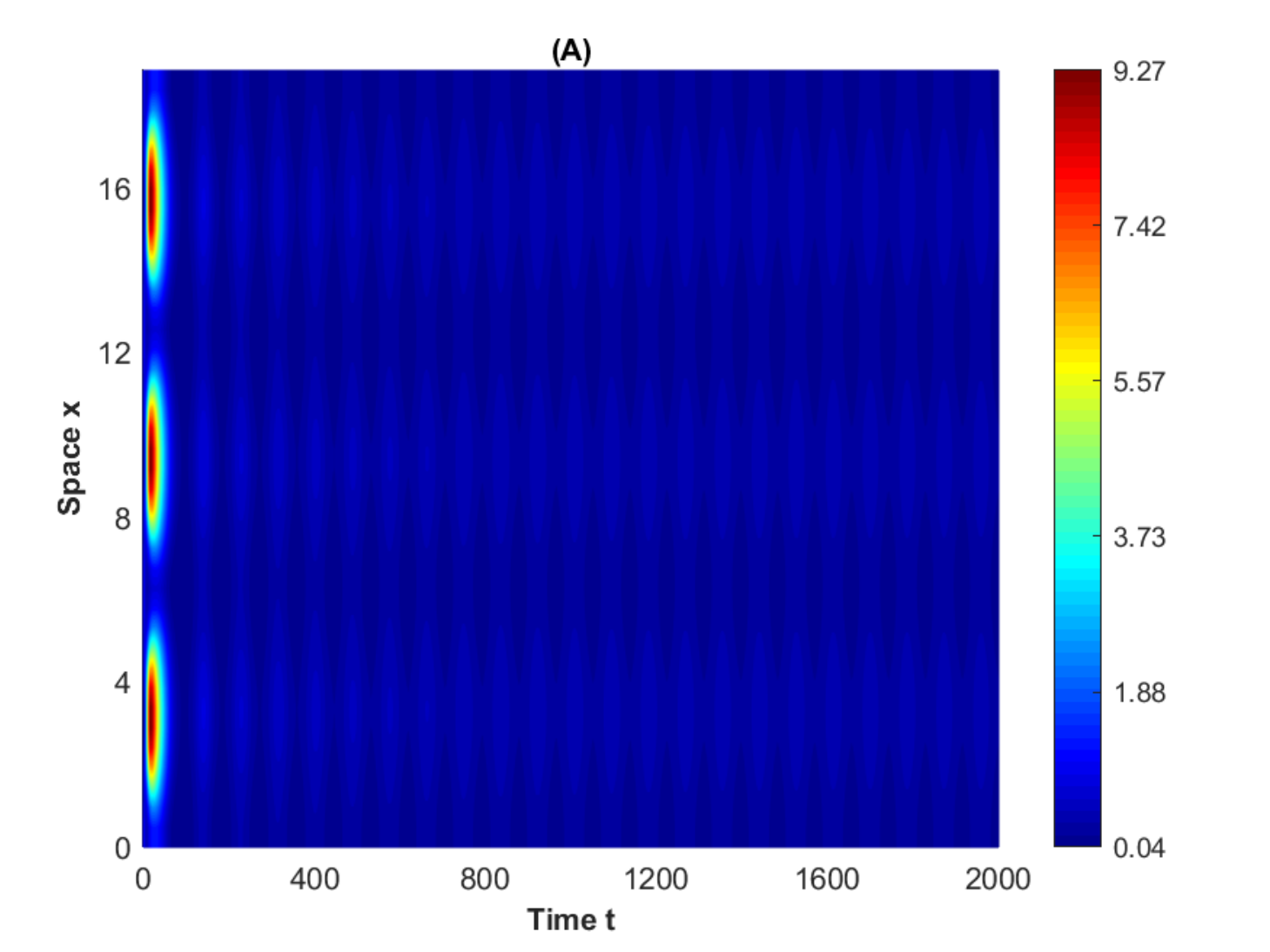}}\\
\subfigure{\includegraphics[width=2in]{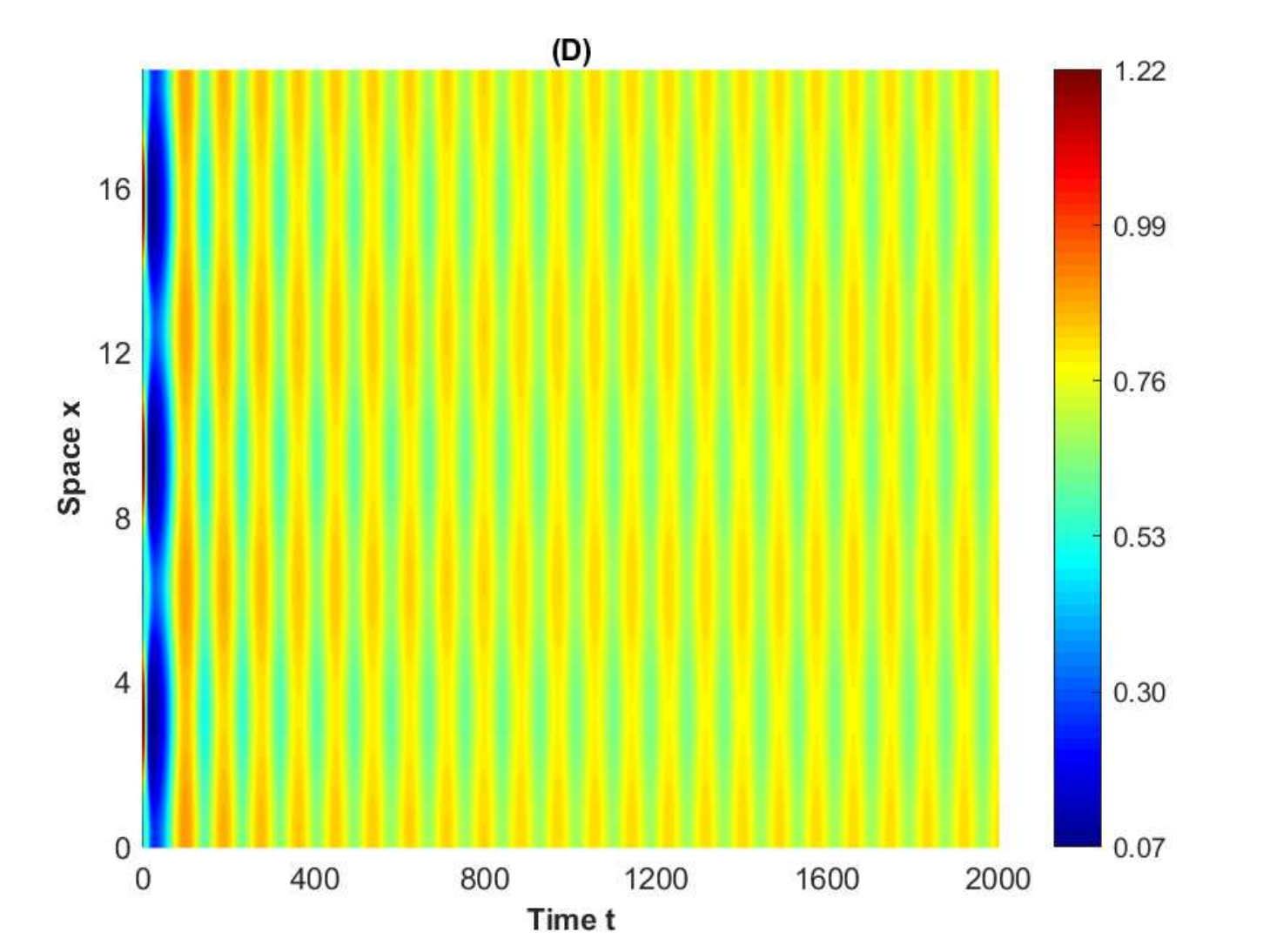}}
\end{center}

\begin{center}
\subfigure{\includegraphics[width=2in]{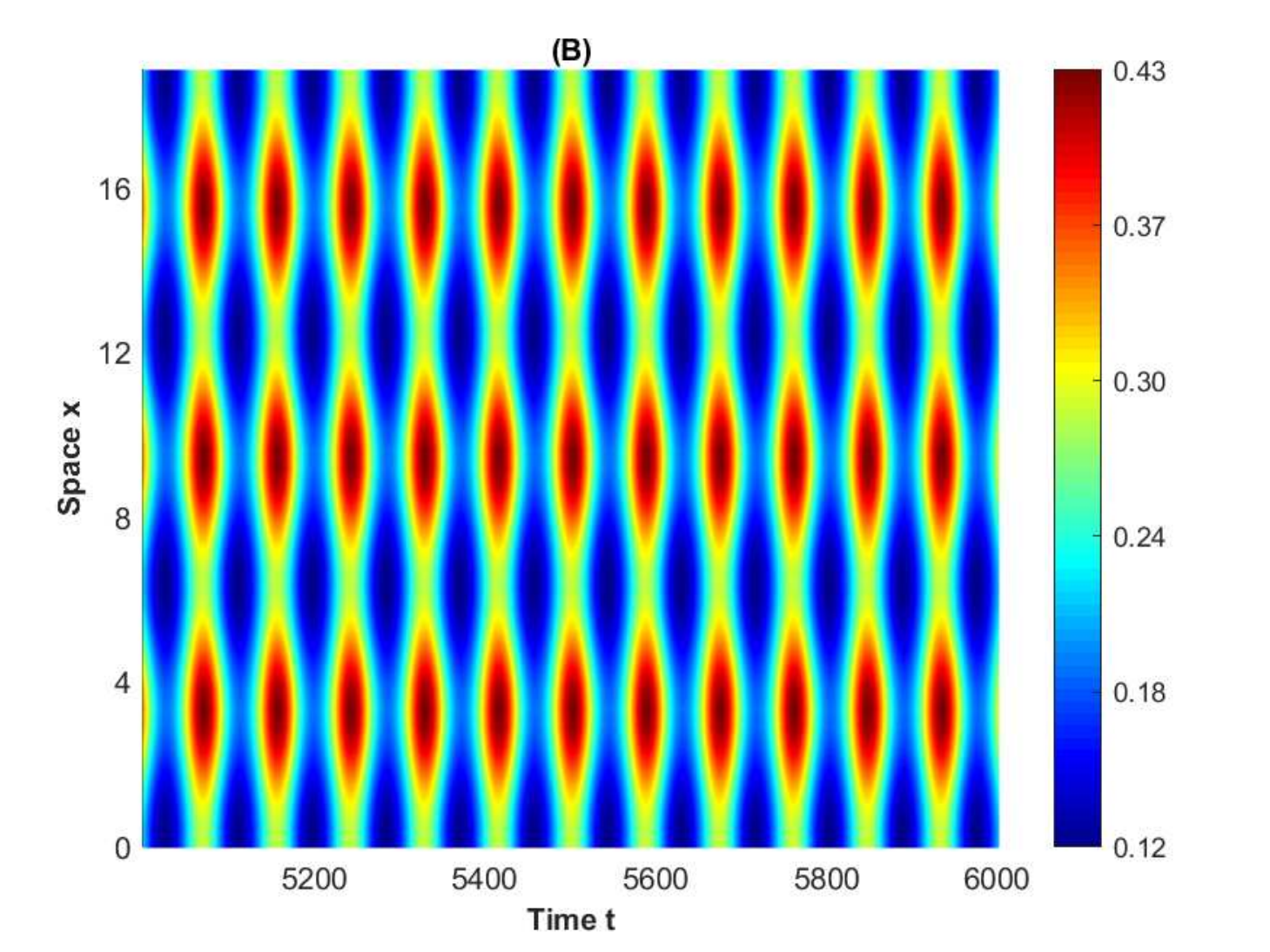}} \\ 
\subfigure{\includegraphics[width=2in]{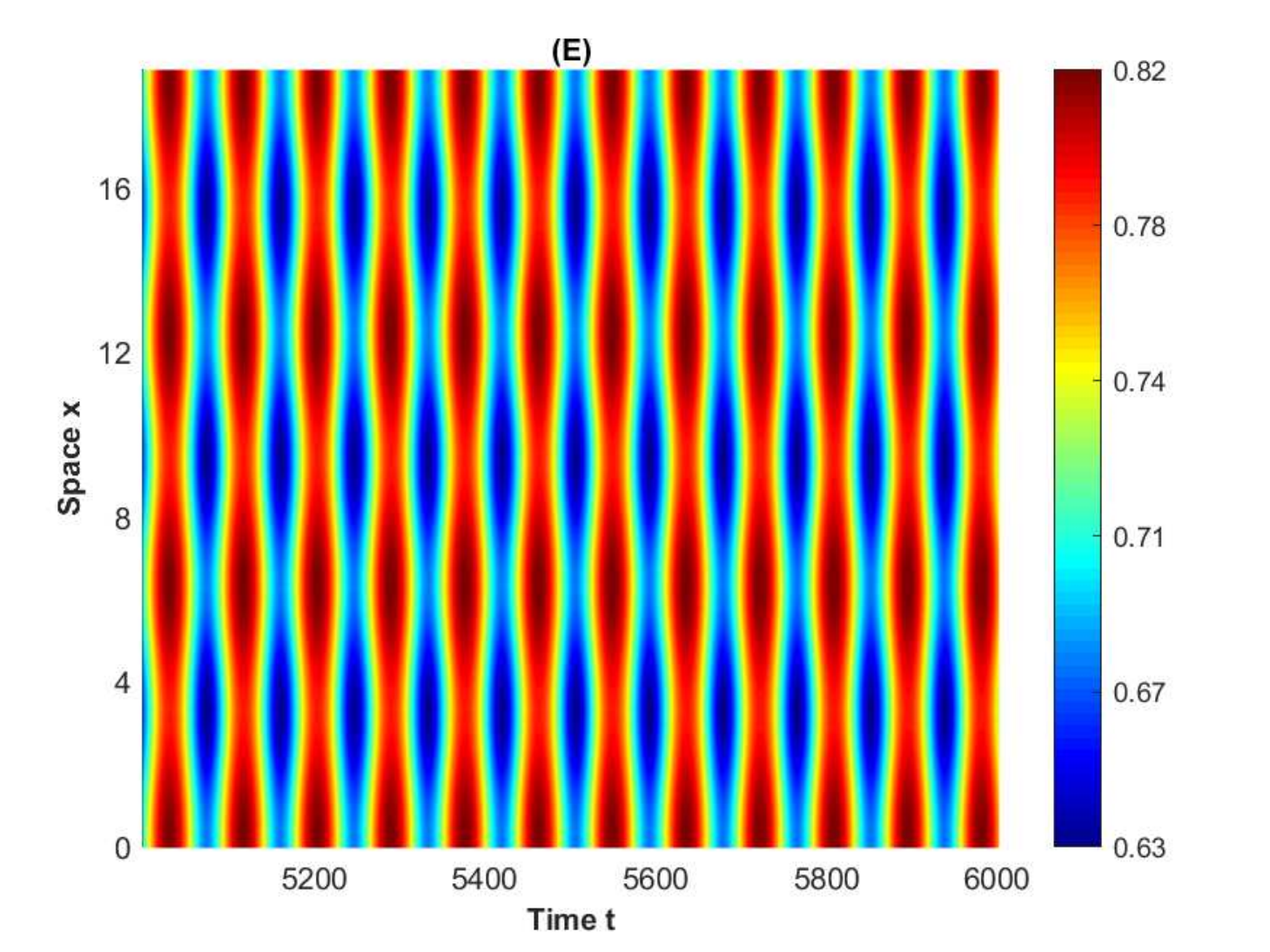}} \end{center}

\begin{center}
\subfigure{\includegraphics[width=2in]{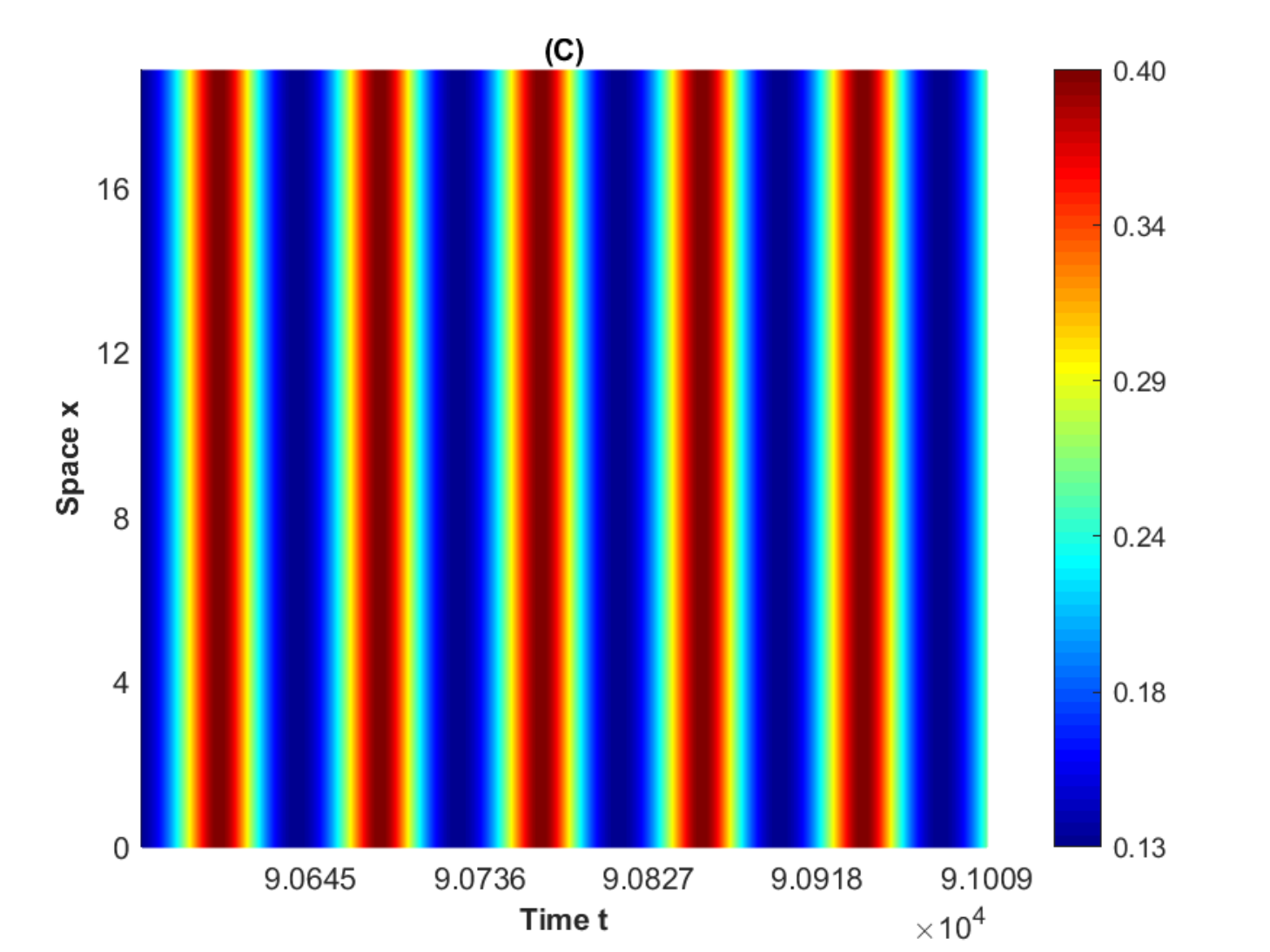}}\\
\subfigure{\includegraphics[width=2in]{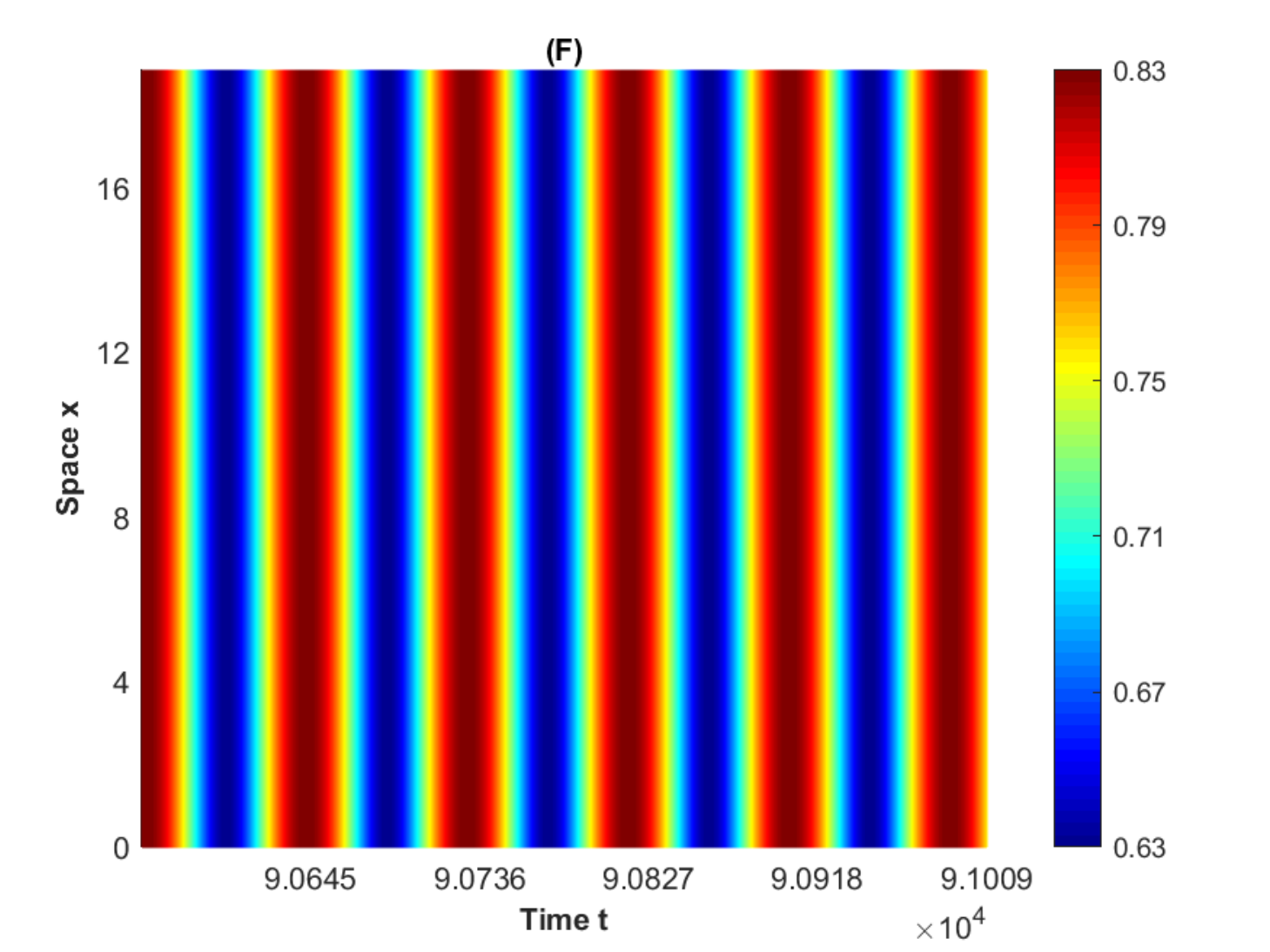}}
\end{center}

\end{multicols}
 \caption{Stable spatially homogeneous periodic solution and unstable spatially inhomogeneous periodic solutions in $D_4$ with
 $(\tau_{\varepsilon}, d_{\varepsilon})=(0.5, -0.0009)\in D_4$, and the initial function are $(m^*+0.3+0.5\cos x, a^*-0.5\cos x)$. (A)-(C): The dynamics of mussel; (D)-(F): The dynamics of algae.}\label{fig-D4_1}
\end{figure}

\begin{figure}[htp]
\begin{multicols}{3}
\begin{center}
\subfigure{\includegraphics[width=2in]{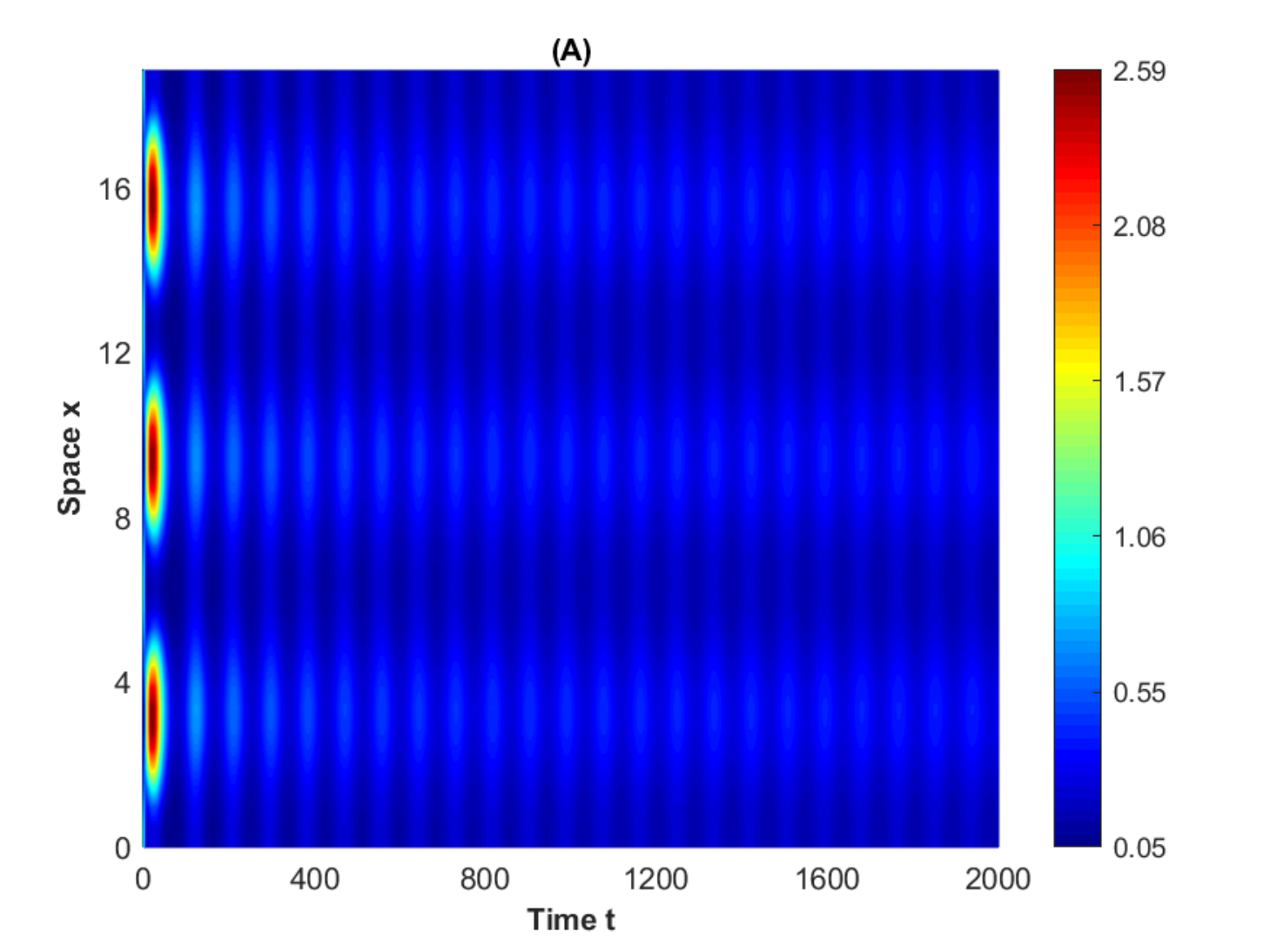}}\\
\subfigure{\includegraphics[width=2in]{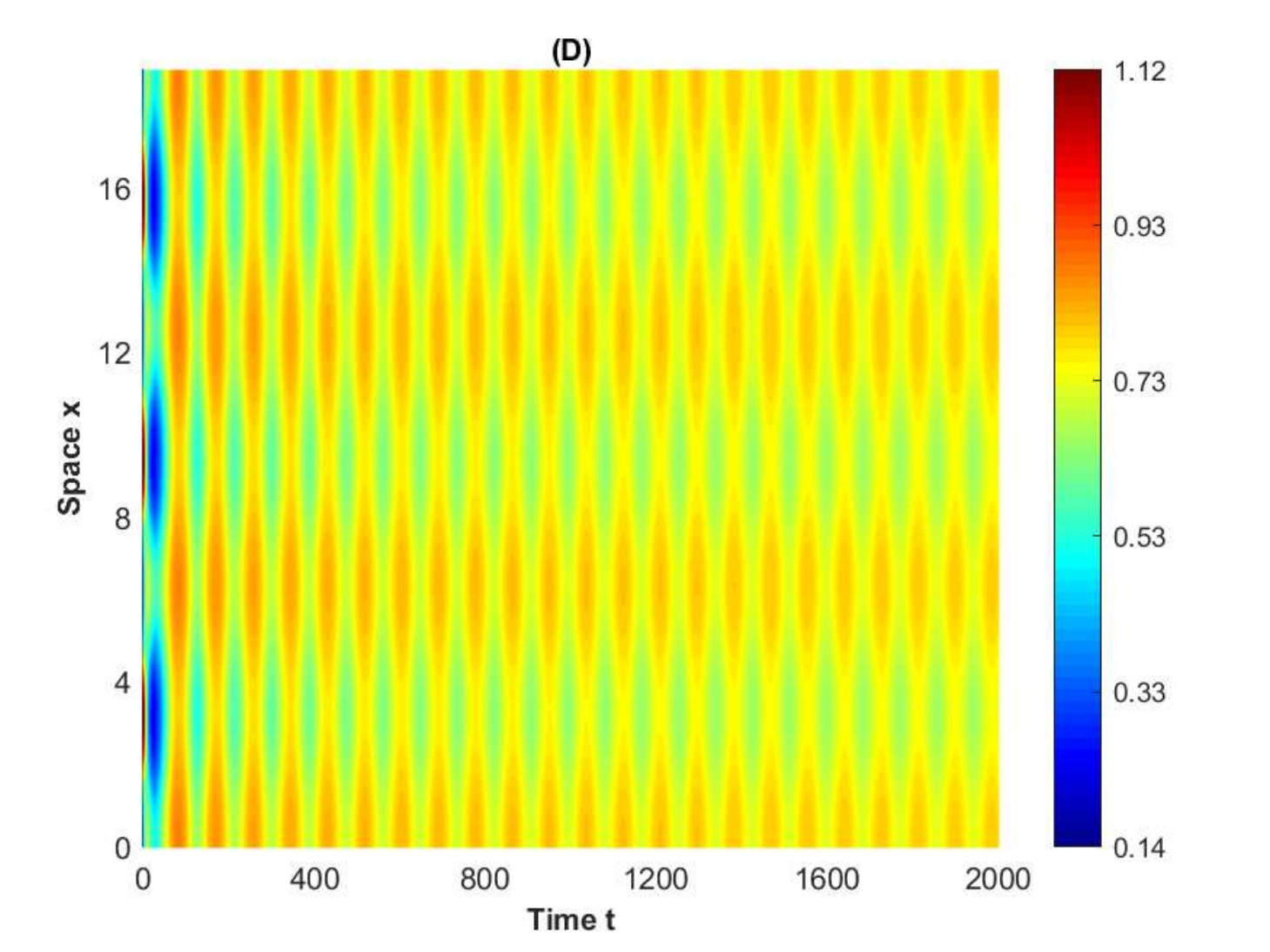}}
\end{center}

\begin{center}
\subfigure{\includegraphics[width=2in]{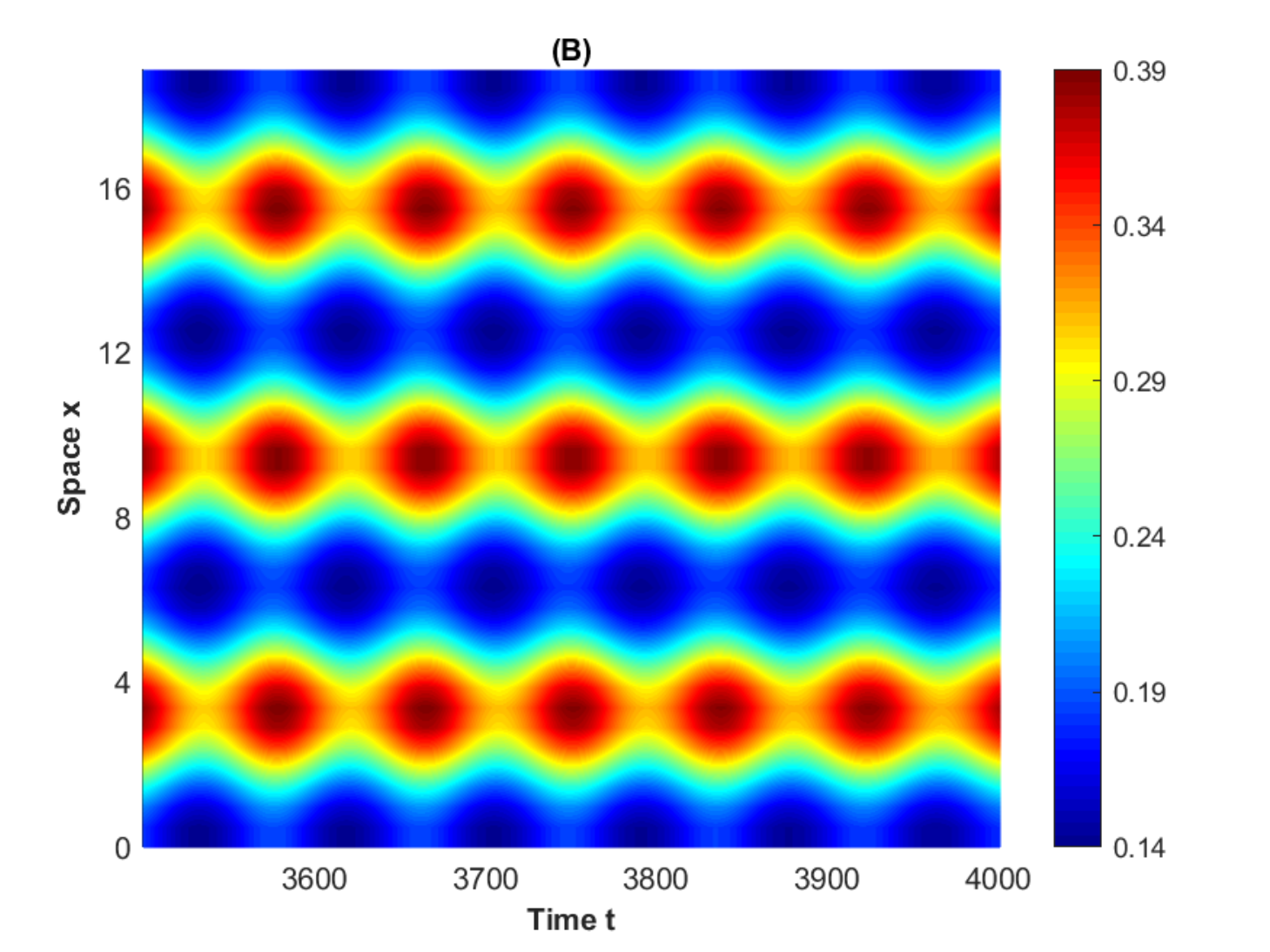}}\\
\subfigure{\includegraphics[width=2in]{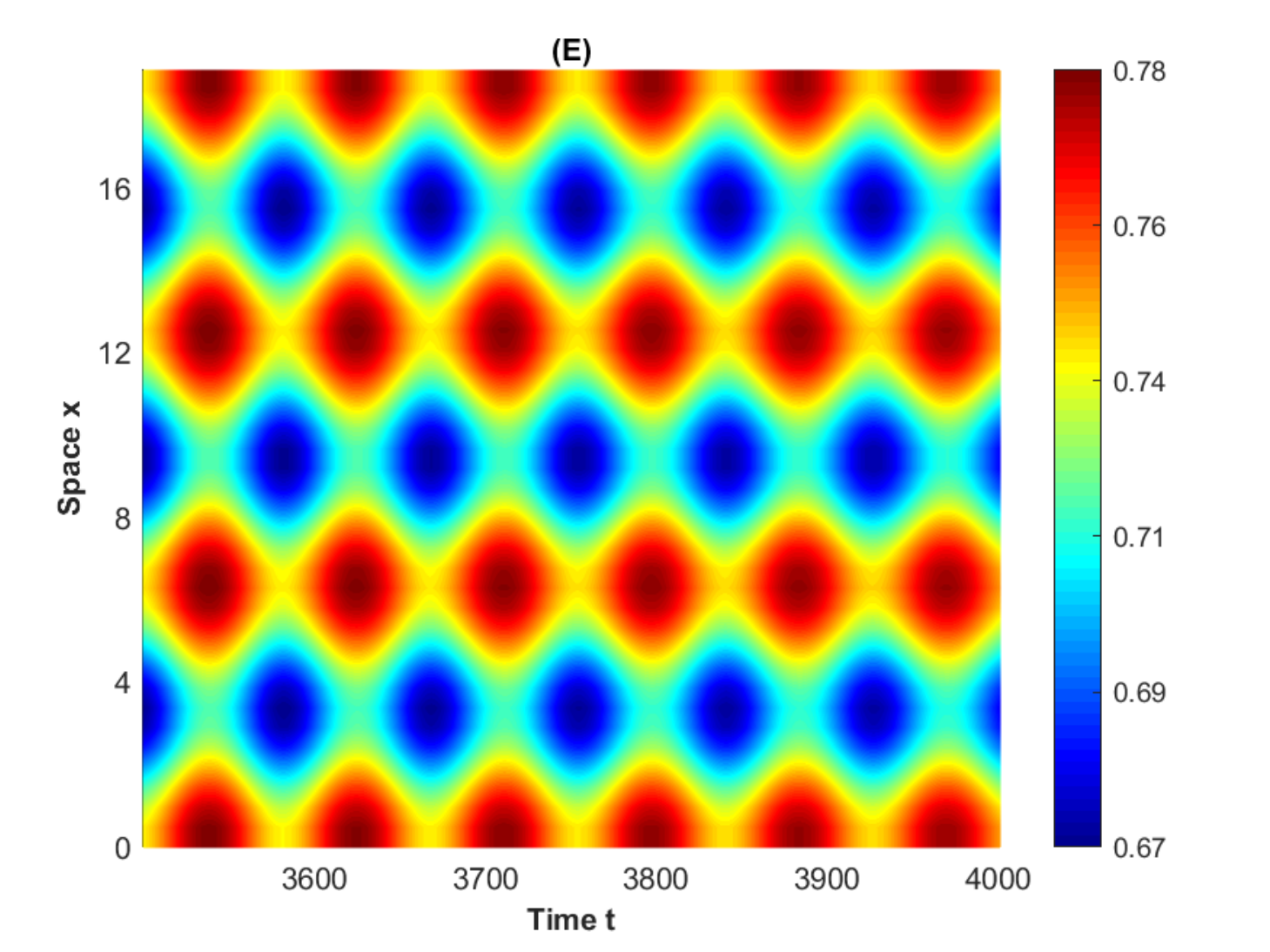}}
\end{center}

\begin{center}
\subfigure{\includegraphics[width=2in]{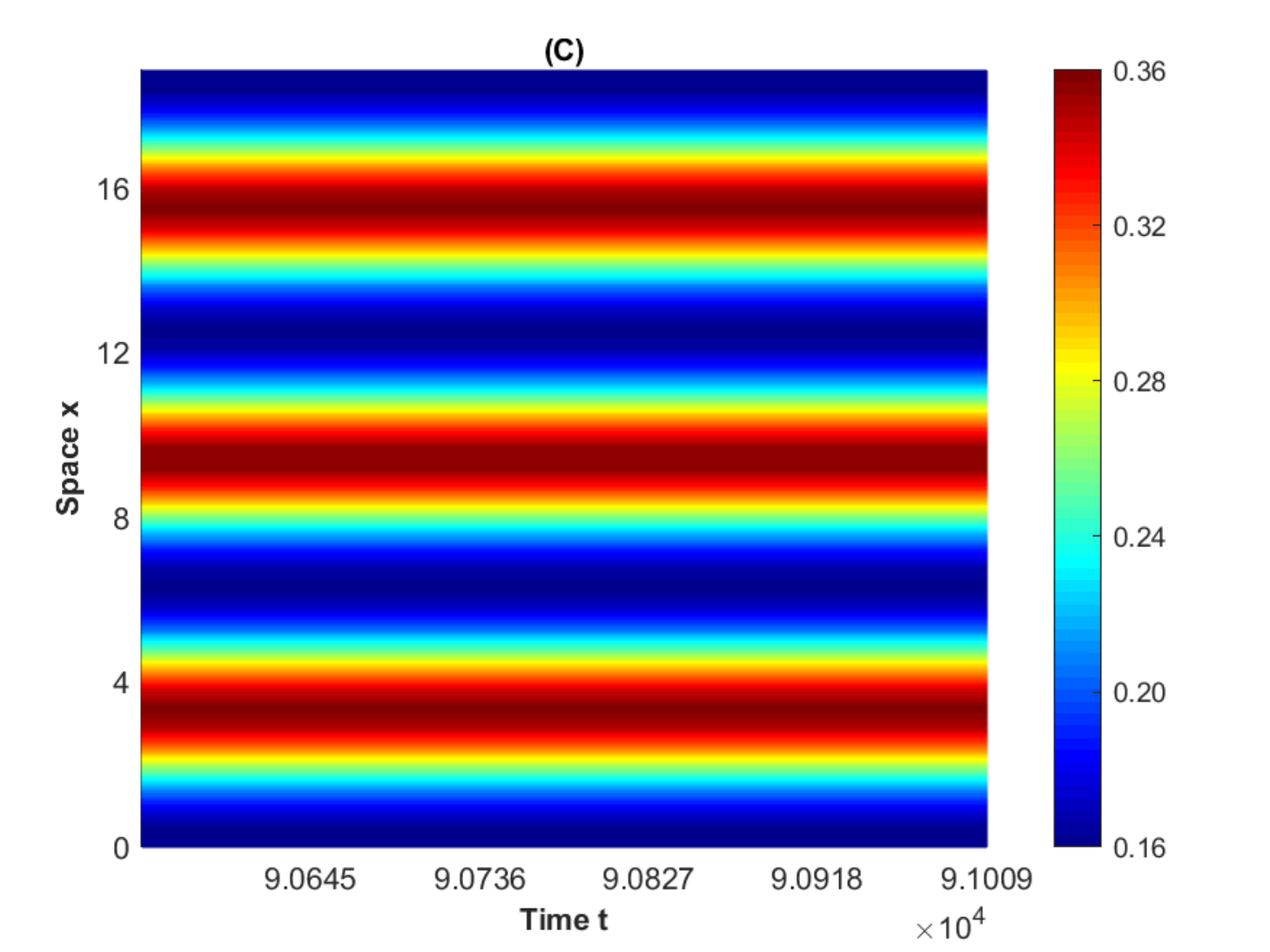}}\\
\subfigure{\includegraphics[width=2in]{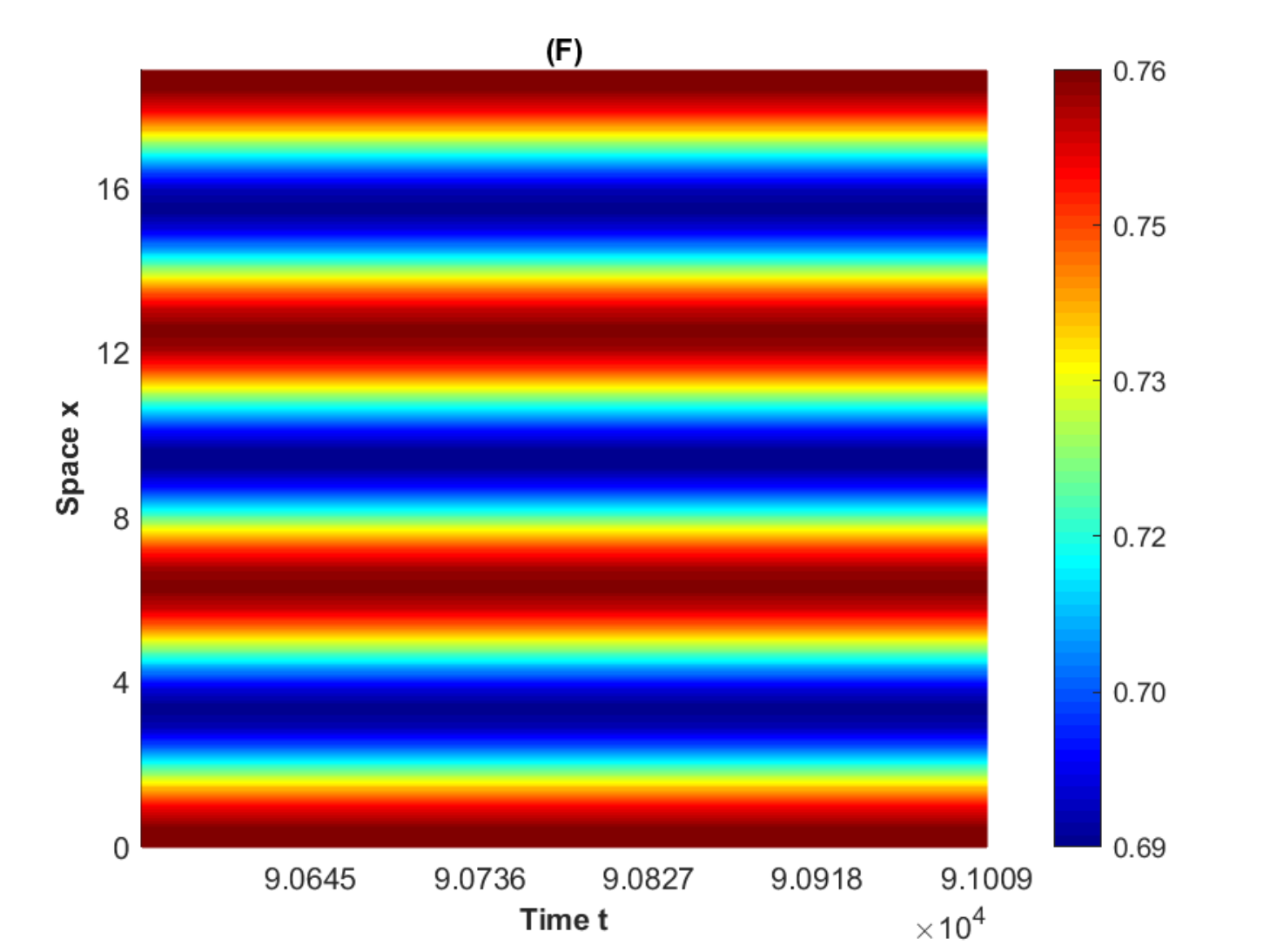}}
\end{center}
\end{multicols}
 \caption{Stable non-constant steady states in $D_4$ and unstable spatially inhomogeneous periodic solutions with
 $(\tau_{\varepsilon}, d_{\varepsilon})=(0.5, -0.0009)\in D_4$, and the initial function are $(m^*+0.3+0.5\cos x, a^*-0.1-0.5\cos x)$. (A)-(C): The dynamics of mussel; (D)-(F): The dynamics of algae.}\label{fig-D4_2}
\end{figure}

\begin{figure}[htp]
\begin{multicols}{3}
\begin{center}
\subfigure{\includegraphics[width=2in]{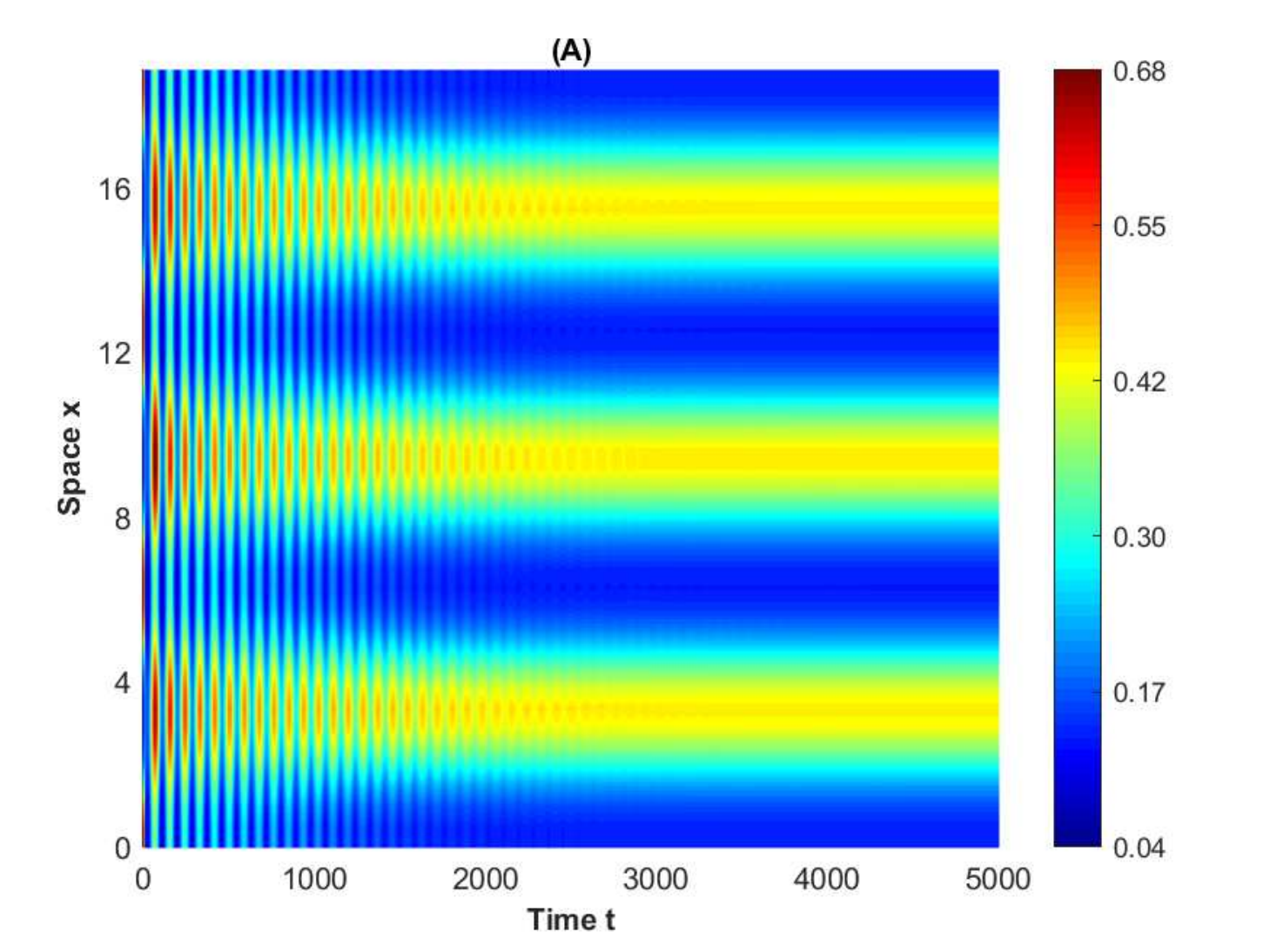}}\\
\subfigure{\includegraphics[width=2in]{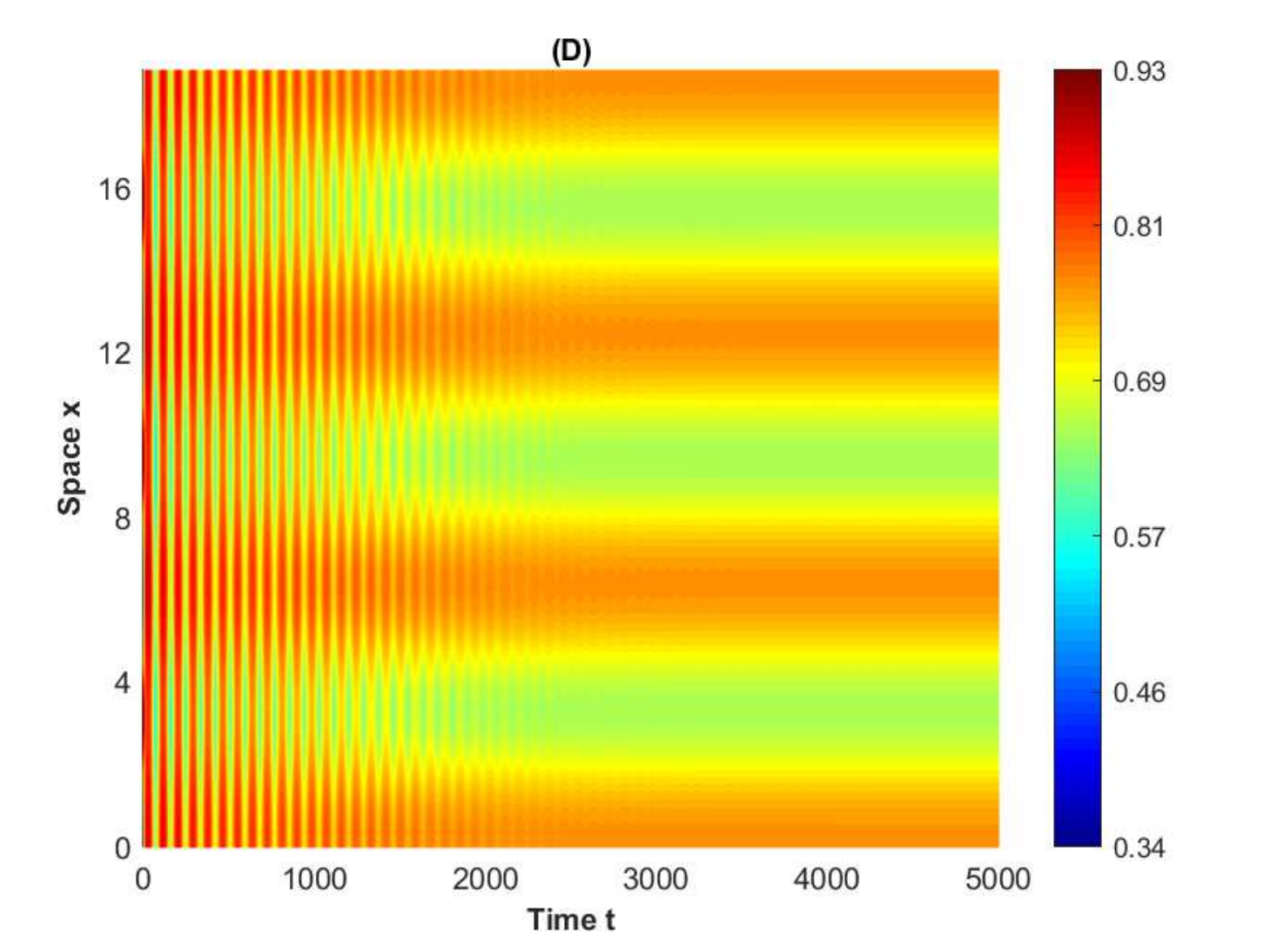}}
\end{center}

\begin{center}
\subfigure{\includegraphics[width=2in]{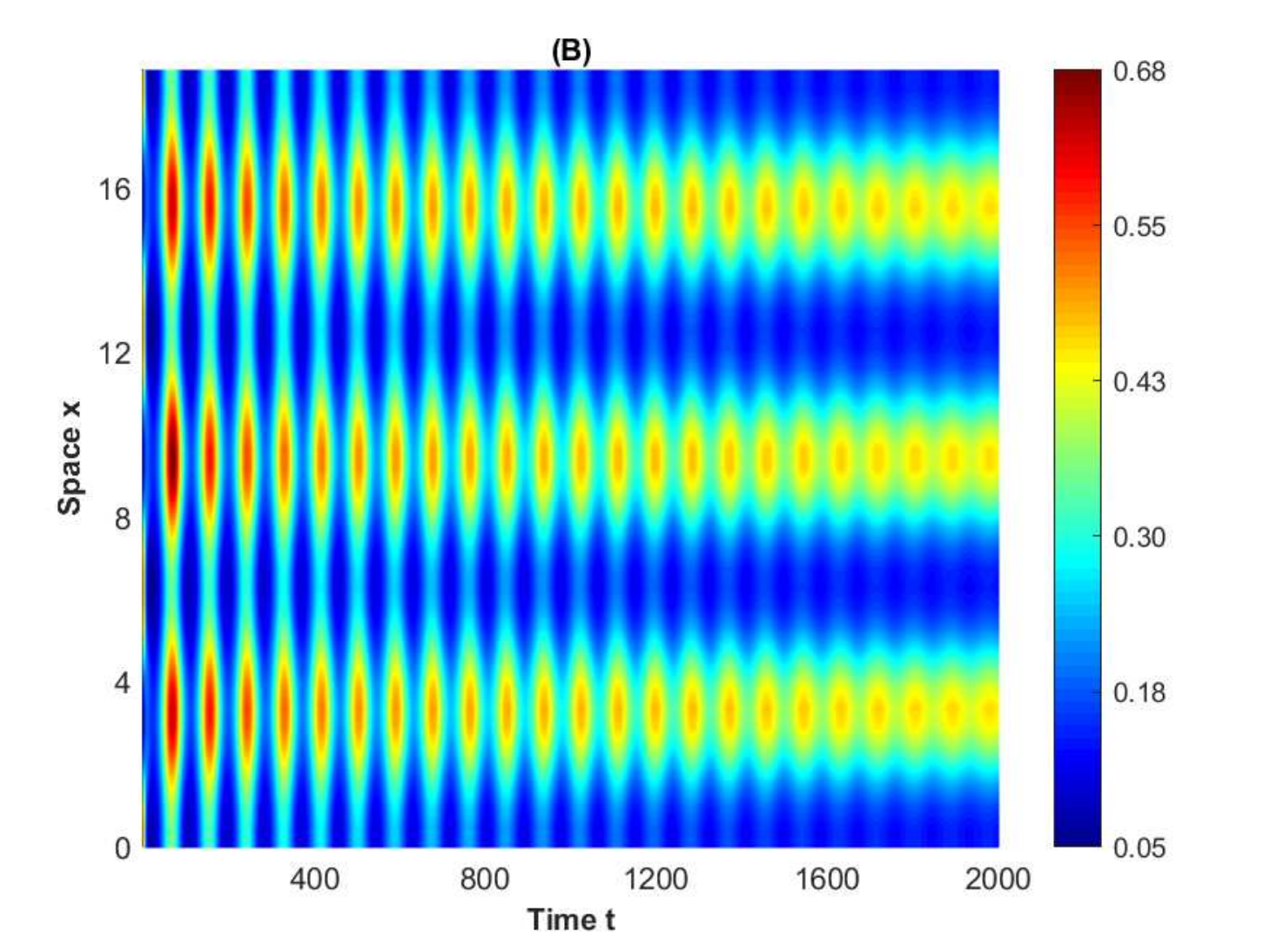}}\\
\subfigure{\includegraphics[width=2in]{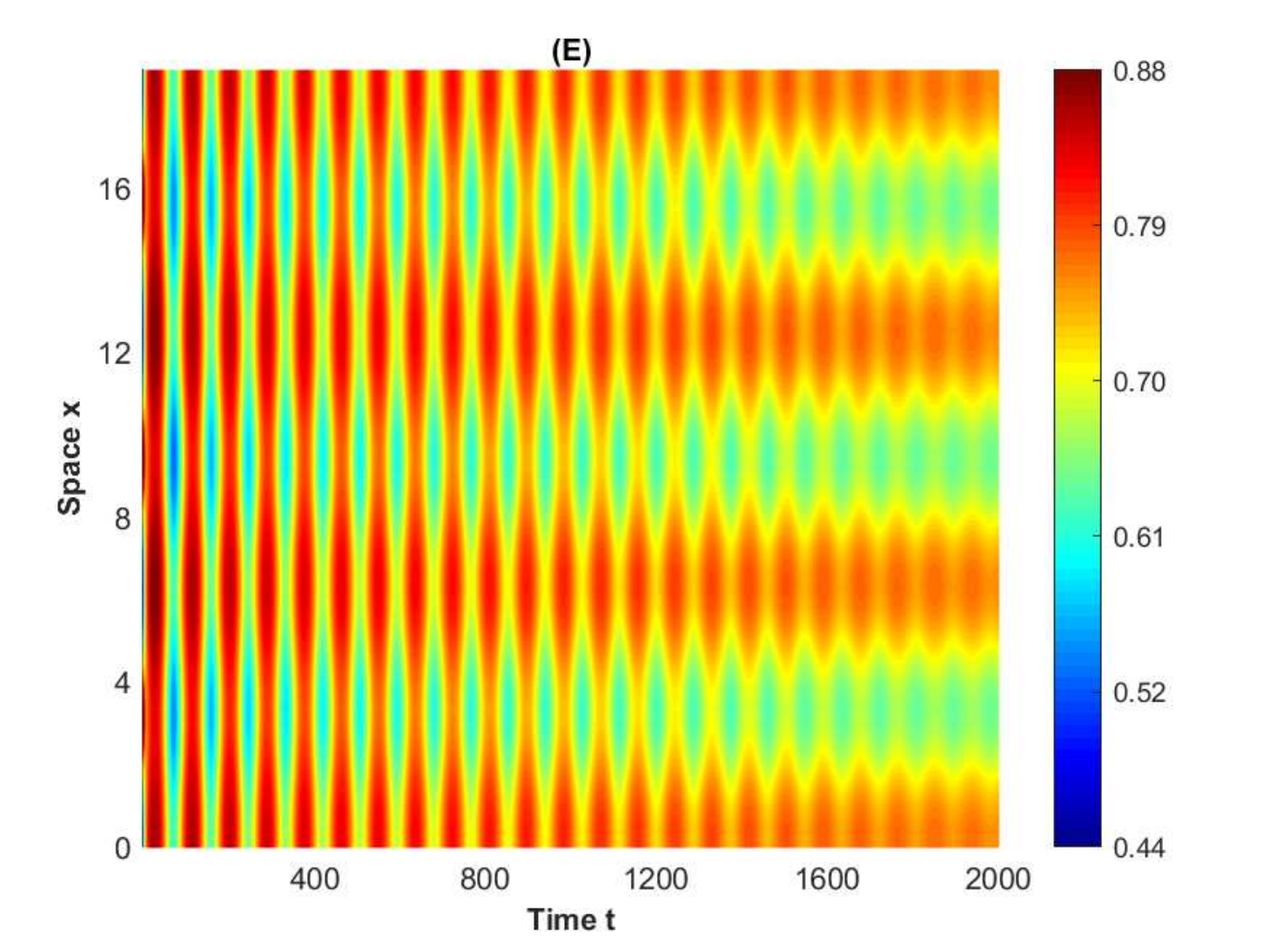}}
\end{center}

\begin{center}
\subfigure{\includegraphics[width=2in]{m_D5_final.pdf}}\\
\subfigure{\includegraphics[width=2in]{a_D5_final.pdf}}
\end{center}
\end{multicols}
 \caption{Stable spatially homogeneous periodic solution in $D_5$ with
 $(\tau_{\varepsilon}, d_{\varepsilon})=(0.5, -0.002)\in D_5$, and the initial function are $(m^*+0.1+0.3\cos x, a^*-0.1-0.3\cos x)$. (A)-(C): The dynamics of mussel; (D)-(F): The dynamics of algae.}\label{fig-D5}
\end{figure}

\begin{figure}[htp]
\begin{center}
\subfigure{\includegraphics[width=2.5in]{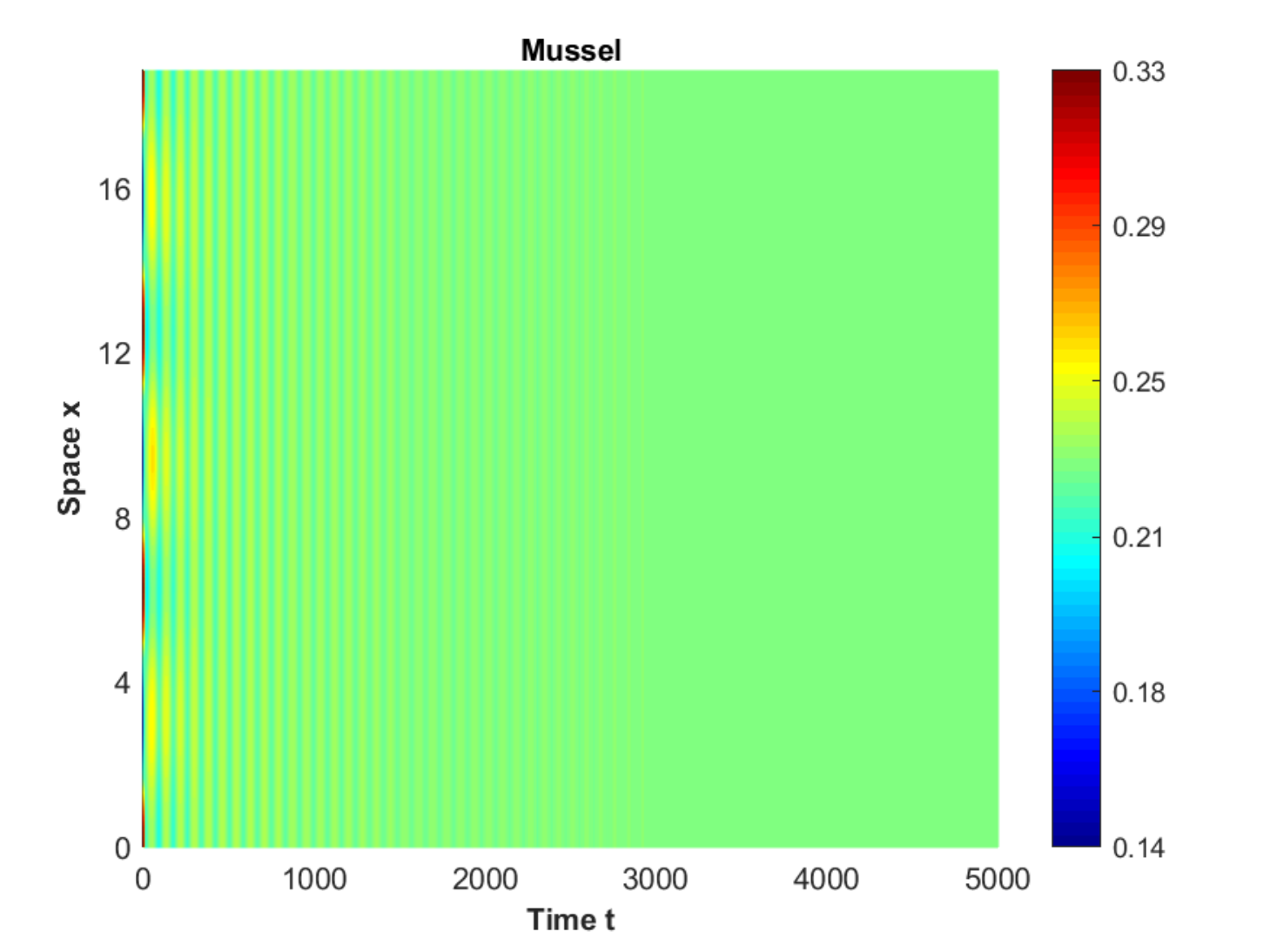}}
\subfigure{\includegraphics[width=2.5in]{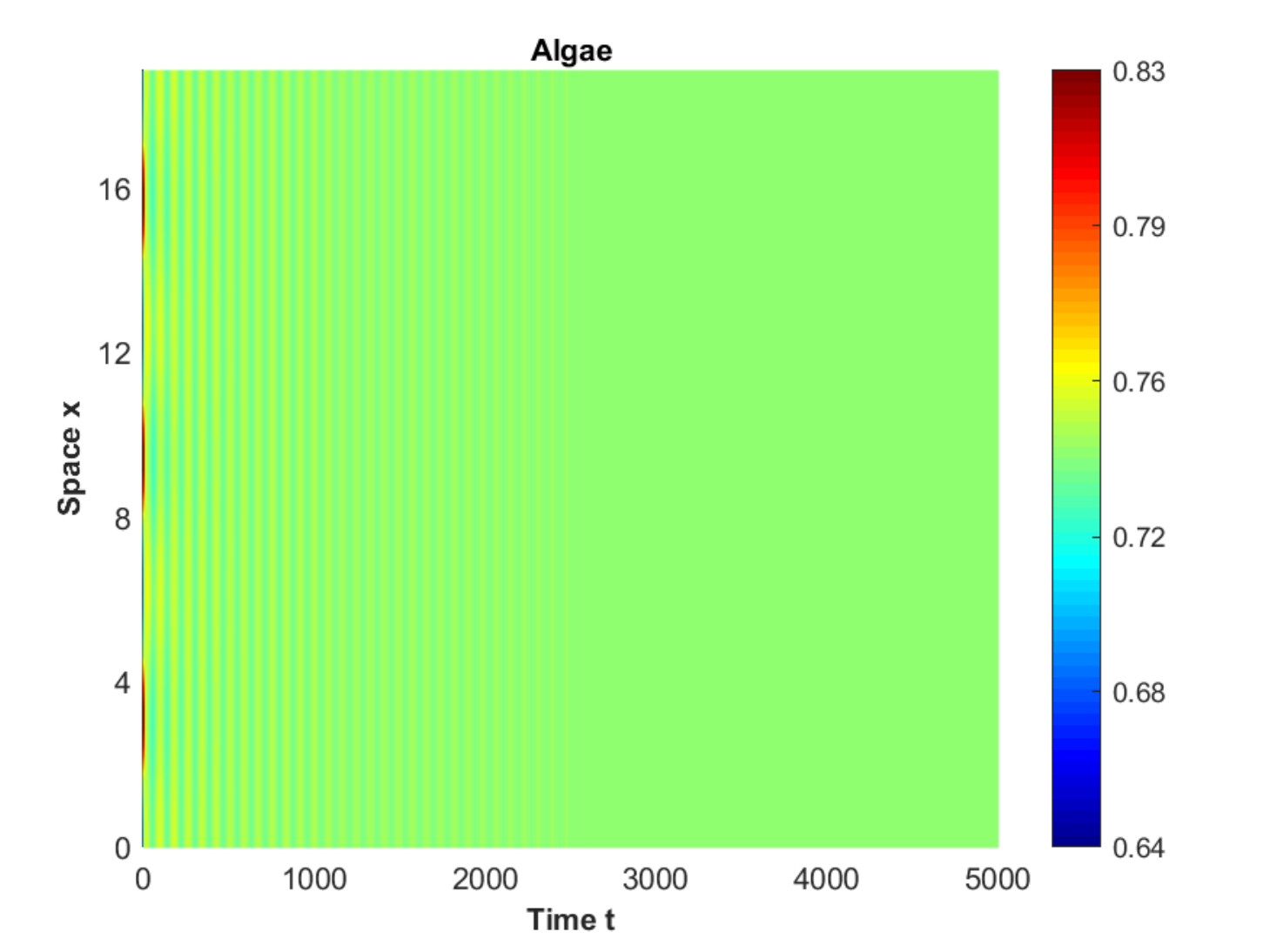}}
\end{center}
\caption{The positive constant steady state $E_*(m^*, a^*)$ of system \eqref{eq_ma_tau} is locally asymptotically stable in $D_1$ with
 $(\tau_{\varepsilon}, d_{\varepsilon})=(-0.5, 0.01)\in D_1$, and the initial function are $(m^*+0.1\cos x, a^*-0.1\cos x)$.}\label{fig-D1}
\end{figure}

\newpage
\section{Discussion and conclusion}
In this paper, we investigate the spatiotemporal patterns induced by the Turing-Hopf bifurcation for a mussel-algae model with delay and diffusion.

We first show the global existence of solutions of system \eqref{eq_ma_tau}. But, the boundedness of mussel $m(x,t)$ is still unknown. A reason is that the death rate of mussel depends on the density of mussels themselves. If the mortality of mussel is a constant, then the estimate of mussel can be obtained without difficulty. Hence, a open mathematical question for this model is the global stability of the positive spatially homogeneous steady state.

Under the assumption \textsc{(H1)} and \textsc{(H2)}, the positive spatially homogeneous steady state is locally asymptotically stable under a linear homogeneous perturbation when $\tau=0$. But when $\tau$ get the critical value $\tau_0$, the positive spatially homogeneous steady state will lose its stability , at the same time,
a positive spatially homogeneous periodic solution appears and the system undergoes a Hopf bifurcation which is induced by the delay.

To investigate the Turing instability of system \eqref{eq_ma_tau}, we discuss the effect of diffusion coefficient $d$. If $d>d_0$, there is no Turing instability; and if $d<d_0$, one can always find a wave number $k$ such that Turing instability occurs. It's nothing that $d$ is not the true diffusivity ratio, actually, it is only the diffusion coefficient of the predator, mussel, and $\frac{1}{\gamma}$ is another diffusion coefficient belongs to the prey, algae. For fixed $\gamma$, if $d$ is sufficiently large, which means the diffusivity ratio $d\gamma$ is sufficient large, and by our result, there is no Turing instability. According to the mechanism of pattern formation presented by Turing in \cite{Tur}, the mussel represents the ``activator" while algae, the``inhibitor". It is somewhat different from the general predator-prey model.

The dynamics near the Turing-Hopf bifurcation is discussed in detail by using the method of normal form for partial functional differential equations. We divide the $\tau_{\varepsilon}-d_{\varepsilon}$ plane into six regions with the phase portraits of each region are different. There are four types of patterns: spatially homogeneous / inhomogeneous steady state;  spatially homogeneous / inhomogeneous periodic solutions. From the numerical simulations, one can easily see that the delay $\tau$ and diffusion coefficient $d$ could result in complex spatiotemporal dynamics.

The interaction between mussel and algae contains a wealth of information.
Considering the mechanisms of flow motion \cite{ShM} and formation of mussel bed \cite{Liu-2014}, there are still many problems to be solved. For example, If the advection term is added, how will it affect the dynamics of system? when the
space domain expand to 2-dimension, what are the effects of time delay, diffusion coefficient and the advection ? and how do they interact each other ?

Within restoration ecology, the mussel beds are typical and active research system \cite{Don, Liu-2014}.
Also, because of the high edible and medicinal value, mussel fisheries plays an important role in fiscal revenue in many coastal areas.
The formation of spatiotemporal patterns may affect both the resilience and productivity of mussel beds.
Hence, studying the mussel-algae model and the formation of different patterns has important biological and economic significance and we
need more realistic and detailed models to depict those behaviors in the following work.

\section{Appendix}

The coefficient vectors $F_{y_i(\theta)z_j}$, $F_{mnk}$ presented in normal form \eqref{z_norm} and therein can be obtained by using the following calculation formulas, where $F_{mm}=\cfrac{\partial^2}{\partial m^2}F(0,0)$, $F(\mu_{\varepsilon}, U_t)$ is defined by \eqref{F}, and others can be deduced by analogy.

\begin{equation*}
  \begin{array}{ll}
  F_{y_1(0)z_1}=2(F_{mm}+F_{ma}q_1+F_{mm_{\tau}}e^{-i\omega_0\tau_0}+F_{ma_{\tau}}q_1e^{-i\omega_0\tau_0}),\\
  F_{y_1(-1)z_1}=2(F_{mm_{\tau}}+F_{m_{\tau}a}q_1+F_{m_{\tau}m_{\tau}}e^{-i\omega_0\tau_0}+F_{m_{\tau}a_{\tau}}q_1e^{-i\omega_0\tau_0}),\\
  F_{y_2(0)z_1}=2(F_{ma}+F_{aa}q_1+F_{m_{\tau}a}e^{-i\omega_0\tau_0}+F_{aa_{\tau}}q_1e^{-i\omega_0\tau_0}),\\
  F_{y_2(-1)z_1}=2(F_{ma_{\tau}}+F_{aa_{\tau}}q_1+F_{m_{\tau}a_{\tau}}e^{-i\omega_0\tau_0}+F_{a_{\tau}a_{\tau}}q_1e^{-i\omega_0\tau_0}),\\
  F_{y_1(0)z_2}=2(F_{mm}+F_{mm_{\tau}}+F_{ma}p_1+F_{ma_{\tau}}p_1),\\
  F_{y_1(-1)z_2}=2(F_{mm_{\tau}}+F_{m_{\tau}m_{\tau}}+F_{m_{\tau}a_{\tau}}p_1+F_{m_{\tau}a}p_1),\\
  F_{y_2(0)z_2}=2(F_{ma}+F_{m_{\tau}a}+F_{aa}p_1+F_{aa_{\tau}}p_1,\\
  F_{y_2(-1)z_2}=2(F_{ma_{\tau}}+F_{m_{\tau}a_{\tau}}+F_{aa_{\tau}}p_1+F_{a_{\tau}a_{\tau}}p_1),\\ \end{array}
\end{equation*}
and
\begin{equation*}
  \begin{array}{ll}
   F_{200}=&F_{mm}+F_{aa}q_1^2+F_{m_{\tau}m_{\tau}}e^{-2i\omega_0\tau_0}+F_{a_{\tau}a_{\tau}}q_1^2e^{-2i\omega_0\tau_0}+2(F_{ma}q_1+
             F_{mm_{\tau}}e^{-i\omega_0\tau_0}\\
             &+F_{ma_{\tau}}q_1e^{-i\omega_0\tau_0}+F_{m_{\tau}a}q_1e^{-i\omega_0\tau_0}+F_{aa_{\tau}}q_1^2e^{-i\omega_0\tau_0}+F_{m_{\tau}a_{\tau}}
              q_1e^{-2i\omega_0\tau_0}),\\
   F_{110}=&2\big[F_{mm}+F_{aa}q_1\bar{q}_1+F_{m_{\tau}m_{\tau}}+F_{a_{\tau}a_{\tau}}q_1\bar{q}_1+F_{ma}(q_1+\bar{q}_1)
             +F_{mm_{\tau}}(e^{-i\omega_0\tau_0}+e^{i\omega_0\tau_0})\\
             &+F_{ma_{\tau}}(q_1e^{-i\omega_0\tau_0}+\bar{q}_1e^{i\omega_0\tau_0})+F_{m_{\tau}a}(q_1e^{i\omega_0\tau_0}+\bar{q}_1e^{-i\omega_0\tau_0})
             +F_{aa_{\tau}}q_1\bar{q}_1(e^{-i\omega_0\tau_0}+e^{i\omega_0\tau_0})\\
             &+F_{m_{\tau}a_{\tau}}(q_1+\bar{q}_1)\big],\\
   F_{101}=&2\big[F_{mm}+F_{aa}q_1 p_1+F_{m_{\tau}m_{\tau}}e^{-i\omega_0\tau_0}+F_{a_{\tau}a_{\tau}}q_1p_1e^{-i\omega_0\tau_0}
              +F_{ma}(q_1+p_1)+F_{mm_{\tau}}(1+e^{-i\omega_0\tau_0})\\
             &+F_{ma_{\tau}}(p_1+q_1e^{-i\omega_0\tau_0})+F_{m_{\tau}a}(q_1+p_1e^{-i\omega_0\tau_0})
             +F_{aa_{\tau}}q_1p_1(1+e^{-i\omega_0\tau_0})\\
             &+F_{m_{\tau}a_{\tau}}(q_1+p_1)e^{-i\omega_0\tau_0}\big],\\
   F_{002}=&F_{mm}+F_{aa}p_1^2+F_{m_{\tau}m_{\tau}}+F_{a_{\tau}a_{\tau}}p_1^2+2(F_{ma}p_1+
             F_{mm_{\tau}}+F_{ma_{\tau}}p_1+F_{m_{\tau}a}p_1\\
             &+F_{aa_{\tau}}p_1^2+F_{m_{\tau}a_{\tau}}p_1),\\
   F_{020}=&\overline{F_{200}},\\
   F_{011}=&\overline{F_{101}}.
  \end{array}
\end{equation*}
and
\begin{equation*}
  \begin{array}{ll}
    F_{210}=&3\big[F_{mmm}+F_{mma}(2q_1+\bar{q}_1)+F_{mmm_{\tau}}(2e^{-i\omega_0\tau_0}+e^{i\omega_0\tau_0})+F_{maa}q_1(2\bar{q}_1+q_1)
             +F_{mma_{\tau}}(2q_1e^{-i\omega_0\tau_0}\\
             &+\bar{q}_1e^{i\omega_0\tau_0})+2F_{mam_{\tau}}(q_1e^{-i\omega_0\tau_0}+q_1e^{i\omega_0\tau_0}+\bar{q}_1e^{-i\omega_0\tau_0})
              +2F_{maa_{\tau}}q_1(q_1e^{-i\omega_0\tau_0}+\bar{q}_1e^{i\omega_0\tau_0}+\bar{q}_1e^{-i\omega_0\tau_0})\\
              &+F_{mm_{\tau}m_{\tau}}(e^{-2i\omega_0\tau_0}+2)+2F_{mm_{\tau}a_{\tau}}(q_1+\bar{q}_1+q_1e^{-2i\omega_0\tau_0})
              +F_{ma_{\tau}a_{\tau}}q_1(2\bar{q_1}+q_1e^{-2i\omega_0\tau_0})+F_{aaa}q_1^2\bar{q}_1\\
              &+F_{aam_{\tau}}q_1(2\bar{q}_1e^{-i\omega_0\tau_0}+q_1e^{i\omega_0\tau_0})
              +F_{aaa_{\tau}}q_1^2\bar{q}_1(2e^{-i\omega_0\tau_0}+e^{i\omega_0\tau_0})+F_{am_{\tau}m_{\tau}}(2q_1+\bar{q}_1e^{-2i\omega_0\tau_0})\\
              &+2F_{am_{\tau}a_{\tau}}q_1(\bar{q}_1+\bar{q}_1e^{-2i\omega_0\tau_0}
              +q_1)+F_{aa_{\tau}a_{\tau}}q_1^2\bar{q}_1(2+e^{-2i\omega_0\tau_0})+F_{m_{\tau}m_{\tau}m_{\tau}}e^{-i\omega_0\tau_0}\\
               &+F_{m_{\tau}m_{\tau}a_{\tau}}(2q_1e^{-i\omega_0\tau_0}+\bar{q}_1e^{-i\omega_0\tau_0})
               +F_{m_{\tau}a_{\tau}a_{\tau}}q_1e^{-i\omega_0\tau_0}(2\bar{q}_1+q_1)
              +F_{a_{\tau}a_{\tau}a_{\tau}}q_1^2\bar{q}_1e^{-i\omega_0\tau_0}\big];\\
  \end{array}
\end{equation*}
\begin{equation*}
  \begin{array}{ll}
 F_{102}=&3\big[F_{mmm}+F_{mma}(q_1+2p_1)+F_{mmm_{\tau}}(e^{-i\omega_0\tau_0}+2)+F_{maa}p_1(2q_1+p_1)
             +F_{mma_{\tau}}(2p_1+q_1e^{-i\omega_0\tau_0})\\
             &+2F_{mam_{\tau}}(q_1+p_1+p_1e^{-i\omega_0\tau_0})
              +2F_{maa_{\tau}}p_1(q_1+p_1+q_1e^{-i\omega_0\tau_0})\\
              &+F_{mm_{\tau}m_{\tau}}(2e^{-i\omega_0\tau_0}+1)+2F_{mm_{\tau}a_{\tau}}(p_1+p_1e^{-i\omega_0\tau_0}+q_1e^{-i\omega_0\tau_0})
              +F_{ma_{\tau}a_{\tau}}p_1(p_1+2q_1e^{-i\omega_0\tau_0})\\
              &+F_{aaa}q_1p^2_1+F_{aam_{\tau}}p_1(2q_1+p_1e^{-i\omega_0\tau_0})
              +F_{aaa_{\tau}}q_1p^2_1(2+e^{-i\omega_0\tau_0})+F_{am_{\tau}m_{\tau}}(q_1+2p_1e^{-i\omega_0\tau_0})\\
              &+2F_{am_{\tau}a_{\tau}}p_1(q_1+q_1e^{-i\omega_0\tau_0}
              +p_1e^{-i\omega_0\tau_0})+F_{aa_{\tau}a_{\tau}}q_1p_1^2(1+2e^{-i\omega_0\tau_0})+F_{m_{\tau}m_{\tau}m_{\tau}}e^{-i\omega_0\tau_0}\\
               &+F_{m_{\tau}m_{\tau}a_{\tau}}(q_1e^{-i\omega_0\tau_0}+2p_1e^{-i\omega_0\tau_0})
               +F_{m_{\tau}a_{\tau}a_{\tau}}p_1e^{-i\omega_0\tau_0}(2q_1+p_1)
              +F_{a_{\tau}a_{\tau}a_{\tau}}q_1p_1^2e^{-i\omega_0\tau_0}\big],
  \end{array}
\end{equation*}
\begin{equation*}
  \begin{array}{ll}
    F_{111}=&6\Big\{F_{mmm}+F_{mma}(q_1+\bar{q}_1+p_1)+F_{mmm_{\tau}}(e^{-i\omega_0\tau_0}+e^{i\omega_0\tau_0}+1)
             +F_{maa}(q_1\bar{q}_1+q_1p_1+p_1\bar{q}_1)\\
             &+F_{mma_{\tau}}(q_1e^{-i\omega_0\tau_0}+\bar{q}_1e^{i\omega_0\tau_0}+p_1)
              +F_{mam_{\tau}}\big[q_1(1+e^{i\omega_0\tau_0})+\bar{q}_1(1+e^{-i\omega_0\tau_0})+p_1(e^{i\omega_0\tau_0}+e^{-i\omega_0\tau_0})\big]\\
              &+F_{maa_{\tau}}\big[q_1p_1(1+e^{-i\omega_0\tau_0})+q_1\bar{q}_1(e^{i\omega_0\tau_0}+e^{-i\omega_0\tau_0})
                +\bar{q}_1p_1(1+e^{i\omega_0\tau_0})\big]
              +F_{mm_{\tau}m_{\tau}}(e^{i\omega_0\tau_0}+e^{-i\omega_0\tau_0}+1)\\
              &+F_{mm_{\tau}a_{\tau}}\big[q_1(1+e^{-i\omega_0\tau_0})+\bar{q}_1(1+e^{i\omega_0\tau_0})
              +p_1(e^{i\omega_0\tau_0}+e^{-i\omega_0\tau_0})\big]\\
              &+F_{ma_{\tau}a_{\tau}}(q_1\bar{q_1}+\bar{q}_1p_1e^{i\omega_0\tau_0}+q_1p_1e^{-i\omega_0\tau_0})
              +F_{aaa}q_1\bar{q}_1p_1
              +F_{aam_{\tau}}(q_1\bar{q}_1+\bar{q}_1p_1e^{-i\omega_0\tau_0}+q_1p_1e^{i\omega_0\tau_0})\\
              &+F_{aaa_{\tau}}q_1\bar{q}_1p_1(1+e^{-i\omega_0\tau_0}+e^{i\omega_0\tau_0})
              +F_{am_{\tau}m_{\tau}}(p_1+q_1e^{i\omega_0\tau_0}+\bar{q}_1e^{-i\omega_0\tau_0})\\
              &+F_{am_{\tau}a_{\tau}}\big[(q_1\bar{q}_1(e^{i\omega_0\tau_0}+e^{-i\omega_0\tau_0})+q_1p_1(1+e^{i\omega_0\tau_0})
              +\bar{q}_1p_1(1+e^{-i\omega_0\tau_0})\big]\\
              &+F_{aa_{\tau}a_{\tau}}q_1\bar{q}_1p_1(1+e^{i\omega_0\tau_0}+e^{-i\omega_0\tau_0})+F_{m_{\tau}m_{\tau}m_{\tau}}
               +F_{m_{\tau}m_{\tau}a_{\tau}}(q_1+\bar{q}_1+p_1)\\
               &+F_{m_{\tau}a_{\tau}a_{\tau}}(q_1\bar{q}_1+q_1p_1+\bar{q}_1p_1)
              +F_{a_{\tau}a_{\tau}a_{\tau}}q_1\bar{q}_1p_1\Big\};\\
    \end{array}
  \end{equation*}
\begin{equation*}
\begin{array}{ll}
F_{003}=&F_{mmm}+3F_{mmm_{\tau}}+3F_{mm_{\tau}m_{\tau}}+F_{m_{\tau}m_{\tau}m_{\tau}}+3F_{mma}p_1+3F_{mma_{\tau}}p_1+6F_{mam_{\tau}}p_1
            +6F_{mm_{\tau}a_{\tau}}p_1\\
            &+3F_{m_{\tau}m_{\tau}a_{\tau}}p_1+3F_{am_{\tau}m_{\tau}}p_1+3F_{maa}p_1^2+6F_{maa_{\tau}}p_1^2+
             3F_{ma_{\tau}a_{\tau}}p_1^2
             +F_{aaa}p_1^3+3F_{m_{\tau}a_{\tau}a_{\tau}}p_1^2\\
             &+3F_{aam_{\tau}}p_1^2+3F_{aaa_{\tau}}p_1^3+6F_{am_{\tau}a_{\tau}}p_1^2+
             3F_{aa_{\tau}a_{\tau}}p_1^3+F_{a_{\tau}a_{\tau}a_{\tau}}p_1^3.
  \end{array}
\end{equation*}

\end{document}